\documentclass[reqno, 11pt, a4paper]{amsart}
%
%
\usepackage{latexsym,ifthen,xspace}
\usepackage{amsmath,amssymb,amsthm}
\usepackage{enumerate, calc}
\usepackage[colorlinks=true, pdfstartview=FitV, linkcolor=blue,
  citecolor=blue, urlcolor=blue, pagebackref=false]{hyperref}
\usepackage[text={33pc,605pt},centering]{geometry}    
\usepackage{fullpage}
\usepackage{graphicx}
\setlength{\parindent}{0px}
%
%
\newtheorem{theorem}{Theorem}[section]
\newtheorem{lemma}[theorem]{Lemma}
\newtheorem{prop}[theorem]{Proposition}
\newtheorem{assumption}[theorem]{Assumption}
\newtheorem{corro}[theorem]{Corollary}

\theoremstyle{definition}
\newtheorem{definition}[theorem]{Definition}

\theoremstyle{remark}
\newtheorem{remark}[theorem]{Remark}

\numberwithin{equation}{section}

%
%
\DeclareMathAlphabet{\mathsl}{OT1}{cmss}{m}{sl}
\SetMathAlphabet{\mathsl}{bold}{OT1}{cmss}{bx}{sl}

\newcommand{\step}[1]{\noindent \textit{Step} #1.}
\newcommand{\substep}[1]{\noindent \textit{Substep} #1.}

%
%

\newcommand{\om}{\ensuremath{\omega}}

\newcommand{\ve}{\ensuremath{\varepsilon}}

\newcommand{\Si}{\ensuremath{\Sigma}}

%
%

\newcommand{\cF}{\ensuremath{\mathcal F}}

\newcommand{\cL}{\ensuremath{\mathcal L}}

\newcommand{\cX}{\ensuremath{\mathcal X}}
\newcommand{\cY}{\ensuremath{\mathcal Y}}

%
%

\newcommand{\bbE}{\ensuremath{\mathbb E}}

\newcommand{\bbN}{\ensuremath{\mathbb N}} 
 
\newcommand{\bbP}{\ensuremath{\mathbb P}} 
 
\newcommand{\bbR}{\ensuremath{\mathbb R}}

\newcommand{\bbZ}{\ensuremath{\mathbb Z}} 
%
%

%
%

%
%

%
%

\overfullrule=5pt 

\DeclareMathOperator{\mean}{\mathbb{E}}
\DeclareMathOperator{\Mean}{\mathrm{E}}
\DeclareMathOperator{\prob}{\mathbb{P}}
\DeclareMathOperator{\Prob}{\mathrm{P}}

\DeclareMathOperator{\var}{\mathrm{var}}

\newcommand{\ldef}{\ensuremath{\mathrel{\mathop:}=}}

\def\indicator{{\mathchoice {1\mskip-4mu\mathrm l}%
{1\mskip-4mu\mathrm l}{1\mskip-4.5mu\mathrm l}%
{1\mskip-5mu\mathrm l}}}

\def\ue{\underline e}
\def\oe{\overline e}
\def\e{\varepsilon}

\begin{document}

\title[ Berry-Essen Theorem for RCM]{Berry-Esseen Theorem and Quantitative homogenization for the Random Conductance Model with degenerate Conductances}


\author{Sebastian Andres}
\address{University of Cambridge}
\curraddr{Centre for Mathematical Sciences \\
Wilberforce Road, Cambridge
CB3 0WB}
\email{s.andres@statslab.cam.ac.uk}
\thanks{}

 \author{Stefan Neukamm}
 \address{Technische Universit\"at Dresden}
 \curraddr{Faculty of Mathematics,
Zellescher Weg 12-14,
01069 Dresden}
 \email{stefan.neukamm@tu-dresden.de}
 \thanks{}

\subjclass[2010]{ 60K37, 60F05, 35B27, 35K65}

\keywords{Random conductance model,  Berry-Esseen theorem, quantitative homogenization, corrector}

\date{\today}

\dedicatory{}

\begin{abstract}{
 We study the random conductance model on the lattice $\bbZ^d$, i.e.\ we consider a linear, finite-difference, divergence-form operator with random coefficients and the associated random walk under random conductances. We allow the conductances to be unbounded and degenerate elliptic, but they need to  satisfy a strong moment condition and a quantified ergodicity assumption in form of a  spectral gap estimate.  As a main result we obtain in dimension $d\geq 3$ quantitative central limit theorems for the random walk in form of a Berry-Esseen estimate with speed $t^{-\frac 1 5+\varepsilon}$ for $d\geq 4$ and $t^{-\frac{1}{10}+\varepsilon}$ for $d=3$. Additionally, in the uniformly elliptic case in low dimensions $d=2,3$ we improve the rate in a quantitative Berry-Esseen theorem recently obtained by Mourrat. As a central analytic ingredient, for $d\geq 3$ we establish near-optimal decay estimates on the semigroup associated with the environment process. These estimates also play a central role in quantitative stochastic homogenization and extend some recent results by Gloria, Otto and the second author to the degenerate elliptic case. }
\end{abstract}

\maketitle

\tableofcontents

\section{Introduction}
\newcommand{\bomega}{{\boldsymbol\omega}}
Stochastic homogenization of  elliptic equations in divergence form with random coefficients started from the pioneering works of Kozlov \cite{Ko79} and Papanicolaou-Varadhan \cite{PV81}. They established a \emph{qualitative} homogenization result, which (adjusted to a discrete setting) can be rephrased as follows. The unique bounded solution $u_\varepsilon$ to the elliptic finite difference equation
\begin{equation}\label{intro:eq:1}
  \nabla^*{\bomega}\nabla u_\varepsilon=\varepsilon^2f(\varepsilon \, \cdot)\qquad\text{on }\bbZ^d
\end{equation}
with $\bomega$ describing stationary and  ergodic, uniformly elliptic, random coefficients, and $f$ an appropriate right-hand side, e.g.\ $f\in C_c(\bbR^d)$ with zero mean, converges after a rescaling to the solution $u_0$ of the deterministic, elliptic equation
\begin{equation*}
  -\nabla\cdot \bomega_{\hom}\nabla u_0=f\qquad\text{on }\bbR^d,
\end{equation*}
where $\bomega_{\hom}$ denotes a deterministic coefficient matrix, the so-called \textit{homogenized coefficients}. Quantitative stochastic homogenization is concerned with finding the rate of convergence of $u_\varepsilon$ towards $u_0$. The first result in this direction was obtained by Yurinskii \cite{Yu86} and relied on probabilistic arguments. Recently, Gloria and Otto in \cite{GO11,GO12} and together with the second author in \cite{GNO15} obtained the optimal scaling of the error for the discrete, uniformly elliptic case by combining input from 
elliptic and parabolic regularity theory with input from
 statistical mechanics, in particular a spectral gap inequality used to quantify ergodicity. Thereafter, an increasing interest emerged in quantitative stochastic homogenization, e.g. see \cite{mourrat2011, GO14, armstrong2014, GO15, BMN17, armstrong2016, armstrong2016b, armstrong2016c,AKM17, FischerOtto2015, GNOreg, BellaGiuntiOtto2015, BellaFFOtto2016}. 
\medskip

Closely related to the topic of homogenization in PDE theory is the problem of deriving \emph{invariance principles} or \emph{functional central limit theorems} for the so-called \emph{random conductance model} in probability theory, which refers to the random walk $X$ in random environment generated by the operator in \eqref{intro:eq:1}. 
Roughly speaking, an invariance principle states that the scaling limit of $X$ converges to a Brownian motion with a non-random covariance matrix $\Sigma^2$ only depending on the law of the conductances, see Theorem~\ref{thm:ip} below. In particular, the covariance matrix of the limiting process and the homogenized coefficients are related by the identity $\Sigma^2=2\bomega_{\hom}$.
Such invariance principles are subject of very active research since more than a decade, see the surveys \cite{ Bi11, Ku14} and references therein.  
\medskip

The goal of this paper is to extend the quantitative theories of stochastic homogenization and invariance principles to the case of non-uniformly elliptic conductances. In particular, we are interested in moment bounds and decay estimates for the so-called corrector problem of homogenization  and a \emph{Berry-Esseen theorem}. Regarding the latter, a  first inspiring result in this direction has been obtained by Mourrat \cite{Mo12} for uniformly elliptic
 i.i.d.\ conductances. In the uniformly elliptic case, we improve this result (in terms of the convergence rate) in dimension $d=2,3$, and we extend this result to degenerate and correlated environments in dimension $d\geq 3$ with a weaker rate of convergence in dimension $d=3$. 
Our analysis invokes input from the quantitative theory of stochastic homogenization as developed; in particular, we extend some key estimates obtained in \cite{GNO15} under the assumption of uniform ellipticity to degenerate elliptic operators under moment conditions.

\subsection{The model} Let $d\geq 2$. We study the nearest-neighbour \textit{random conductance model} on the $d$-dimen\-sional Euclidean lattice $(\bbZ^d, E_d)$, where $E_d:=\{ e=\{x,x\pm e_i\}:\,x\in\bbZ^d,i=1,\ldots,d\,\}$ denotes the set of non-oriented nearest neighbour edges and $\{e_1, \ldots ,e_d\}$ the canonical basis in $\bbR^d$. We endow  the graph with  positive random weights, which we describe by a family $\omega=\{\om(e), \, e\in E_d\}\in\Omega:=(0,\infty)^{E_d}$. We refer to $\om(e)$ as the \emph{conductance} of an edge $e\in E_d$.  To simplify notation, for any $x,y \in \bbZ^d$, we set
\begin{align*}
  \om(x,y) \;=\; \om(y,x) \;\ldef\; \om(\{x,y\}),
  \quad \forall\, \{x,y\} \in E_d,
  \mspace{28mu}
  \om(x,y) \;\ldef\; 0, \quad \forall\, \{x,y\} \not\in E_d,
\end{align*}
and define the matrix field $\bomega:\bbZ^d\to\bbR^{d\times d}$ by
\begin{equation*}
  \bomega(x)=\operatorname{diag}(\omega(x,x+e_1),\ldots,\omega(x,x+e_d)).
\end{equation*}
Henceforth, we consider random conductances that are distributed according to a probability measure $\prob$ on $\Omega$, equipped with the  $\sigma$-algebra $\cF\ldef \mathcal{B}((0,\infty))^{\otimes E_d}$. We write $\mean$ for the expectation operator with respect to $\prob$. The measure space $(\Omega, \cF)$ is naturally equipped with a \textit{group of space shifts} $\big\{\tau_x : x \in \bbZ^d\big\}$, which act on $\Omega$ as 
\begin{equation*}
  \tau_x\omega(\cdot)\;\ldef \; \omega(\cdot+x),
\end{equation*}
where the \textit{shift by $x$} of an edge $e=\{\ue,\oe\}\in E_d$ is defined as $e+x:=\{\ue+x,\oe+x\}$. 
\medskip

The \emph{random conductance model} is defined as follows. For any fixed realisation $\om$ it is a \emph{reversible} continuous time Markov chain, $X = \{X_t\!: t \geq 0 \}$, on $\bbZ^d$ with generator $\cL^{\om}$ acting on bounded functions $f\!: \bbZ^d \to \bbR$ as
\begin{align} \label{eq:defL}
\big(\cL^{\om} f)(x)
  \;=\; 
  \sum_{y \in \bbZ^d} \om(x,y) \, \big(f(y) - f(x)\big).
\end{align}
With the help of the discrete gradient $\nabla$ and its adjoint $\nabla^*$ (see Section~\ref{sec:notation} below), we can represent the generator in the compact form $\cL^{\om}=-\nabla^*\bomega\nabla$, which highlights the fact that $\cL^{\om}$ is a (finite difference) second order operator in divergence form.
We denote by $\Prob_x^\om$ the law of the process starting at the vertex $x \in \bbZ^d$ and by $\Mean_x^\om$ the corresponding expectation .  This random walk waits at $x$ an exponential time with mean $1/ \mu^\om(x)$ with  $\mu^\om(x) \ldef \sum_{y \in \bbZ^d} \om(x,y)$  and chooses the vertex $y$ for its next position with probability $\om(x,y) / \mu^{\om}(x)$.  Since the law of the waiting times does depend on the location, $X$ is also called the \emph{variable speed random walk} (VSRW).
A general assumption required in the context of invariance principles is stationarity and ergodicity of the environment.
%
%
 \begin{definition} [Stationarity and ergodicity] \label{ass:ergodic}
   We say the measure $\prob$ is stationary with respect to translations of $\bbZ^d$, if $\prob \circ\, \tau_x^{-1} \!= \prob\,$ for all $x \in \bbZ^d$. We say $\prob$ is ergodic, if $\prob[A] \in \{0,1\}\,$ for any $A \in \cF$ such that $\tau_x(A) = A\,$ for all $x \in \bbZ^d$.
 \end{definition}
First results in this context are \emph{annealed} (or \emph{averaged}) functional central limit theorems that yield the convergence of the rescaled random walk under the annealed measure $\prob_0$ defined by $\prob_0 [\cdot] := \int_\Omega P_0^\om [\cdot] \, d\prob(\om)$. This has been established in \cite{dMFGW89} (cf.\ also \cite{KV86}) for general ergodic environments under the assumption of strict positivity and a first moment condition
\begin{equation}\label{ass:moment}
  \prob\!\big[ 0 < \om(e) < \infty\big] \; = \; 1\qquad\text{and}\qquad\mean[\omega(e)] \; < \; \infty, \qquad \text{for all $e\in E^d$}.
\end{equation}
It is of particular interest to understand the finer question whether an invariance principle also holds for $\prob$-a.e.\ $\om$, that is in a \textit{quenched} form.  In \cite{ABDH13}   the quenched invariance principle has been shown for general  i.i.d.\ conductances.  However, in the case of a general ergodic environment, due to a trapping phenomenon, it is clear that some \textit{moment conditions} (stronger than the one in \eqref{ass:moment})  are needed. Indeed, Barlow, Burdzy and Timar  \cite{BBT16}  give an example on $\bbZ^2$ which satisfies a weak moment condition, but not a quenched invariance principle. To formulate the moment condition used in our paper, we set for any $ p,q \in [1,\infty]$, 
  \begin{align}\label{eq:moment_condition}
 M(p,q) \; \ldef \; \sum_{i=1}^d  \Big( \mean\!\big[ \om(0,e_i)^p\big] + \mean\big[\om(0,e_i)^{-q} \big] \Big) \in (0,\infty].
  \end{align}
  Note that in the special case, when $M(p,q)<\infty$ for $p=q=\infty$, we obtain the uniformly elliptic case, i.e. $\prob[1/c \leq \om(e) \leq c ] = 1$ for some $c > 0$ and the generator $\cL^\om=-\nabla^*\bomega\nabla$ defines a uniformly elliptic discrete operator.
  Recently, the following quenched invariance principle has been proven for random walks under ergodic conductances.
  \begin{theorem}[\cite{ADS15}]\label{thm:ip}
    Suppose that  $\prob$ is stationary and ergodic, and that the moment condition $M(p,q)<\infty$ holds for
    exponents $p, q \in  (1, \infty]$  satisfying $p^{-1} + q^{-1} < 2/d$.  
For $n\in \bbN$, define $X_t^{(n)} \ldef \frac{1}{n} X_{n^2 t}$, $t\geq 0$.  Then, for $\prob$-a.e.\ $\om$,   $X^{(n)}$ converges (under $\Prob_{\!0}^\om$) in law towards a Brownian motion on $\bbR^d$  with a deterministic non-degenerate covariance matrix $\Si^2$.
  \end{theorem}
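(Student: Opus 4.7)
The plan is to follow the corrector method combined with a martingale invariance principle, adapted to handle degenerate conductances via a Moser iteration. First, for each $\xi\in\bbR^d$ I would construct a scalar corrector $\chi^\xi\colon\Om\times\bbZ^d\to\bbR$ such that the function $\Phi^\xi(\om,x) \ldef \xi\cdot x + \chi^\xi(\om,x)$ is $\cL^\om$-harmonic. The gradient $\nabla\chi^\xi$ is obtained as the orthogonal projection of $-\xi$ onto the subspace of stationary potential random fields inside the weighted $L^2(\prob)$-space generated by the Dirichlet form $\mean[\sum_{i}\om(0,e_i)(\,\cdot\,)^2]$; the first-moment bound in $M(p,q)<\infty$ guarantees that this form is well-defined. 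Harmonicity of $\Phi^\xi$ yields that
\begin{equation*}
M_t^\xi \;\ldef\; \Phi^\xi(\om,X_t) - \Phi^\xi(\om,X_0)
\end{equation*}
is a $\Prob_0^\om$-martingale, and produces the decomposition $\xi\cdot X_t^{(n)} = n^{-1}M_{n^2 t}^\xi - n^{-1}\chi^\xi(\om,X_{n^2 t})$.

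Second, I would apply the martingale functional CLT of Helland / Lindeberg--Rebolledo type to $n^{-1}M^\xi_{n^2\,\cdot}$. The quadratic variation $\langle M^\xi\rangle_t$ is the time integral along the trajectory of the stationary carr\'e-du-champ functional $\om\mapsto\sum_y\om(0,y)(\Phi^\xi(\om,y)-\Phi^\xi(\om,0))^2$; since the environment process $\om_t\ldef\tau_{X_t}\om$ is reversible with invariant measure $\prob$ and inherits ergodicity from the spatial ergodicity of $\prob$ together with $\om(e)>0$ a.s., Birkhoff's theorem gives $\langle M^\xi\rangle_t/t \to \xi^\top\Si^2\xi$ $\prob$-a.s., with $\Si^2$ deterministic. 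Non-degeneracy of $\Si^2$ follows from the variational characterisation $\xi^\top\Si^2\xi = \inf_{F}\mean\!\big[\sum_i\om(0,e_i)(\xi_i+\nabla_i F)^2\big]$ over stationary $F$ and the strict positivity of conductances.

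The main obstacle, and the only place where the full condition $p^{-1}+q^{-1}<2/d$ is genuinely used, is the pointwise sublinearity
\begin{equation*}
\lim_{n\to\infty}\frac{1}{n}\max_{|x|\le n}\big|\chi^\xi(\om,x)\big| \;=\; 0, \qquad \prob\text{-a.s.},
\end{equation*}
since only this suffices to kill the corrector contribution in the decomposition above uniformly on bounded time intervals (quenched tightness of $X^{(n)}$ being a byproduct of the same estimates). Averaged sublinearity, i.e.\ $L^1$-control of spatial averages of $\nabla\chi^\xi$, is immediate from the spatial ergodic theorem applied to the stationary mean-zero field $\nabla\chi^\xi$. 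To upgrade to the pointwise maximum under degenerate ellipticity, I would run a Moser iteration for the $\cL^\om$-harmonic function $\Phi^\xi$: combine the Caccioppoli energy inequality with the discrete Sobolev inequality on lattice balls, and absorb the weights $\om$ and $\om^{-1}$ via H\"older's inequality with exponents conjugate to $p$ and $q$. The algebraic balance produced by this H\"older--Sobolev chain forces the self-improvement exponent to strictly exceed $1$ precisely when $p^{-1}+q^{-1}<2/d$, whereupon iteration converts averaged control into an $L^\infty$ bound; this is the quantitative role of the moment condition. Combining the martingale CLT with the sublinearity then gives the convergence of $\xi\cdot X^{(n)}$ under $\Prob_0^\om$ for each $\xi$ and $\prob$-a.e.\ $\om$, and the Cram\'er--Wold device together with tightness yields the claimed functional convergence to Brownian motion with covariance $\Si^2$.
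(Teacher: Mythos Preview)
The paper does not supply its own proof of this statement: Theorem~\ref{thm:ip} is quoted from \cite{ADS15} and used as background input. Your outline is a faithful sketch of the argument in that reference. The corrector is indeed built by projection in the weighted $L^2(\prob)$ Dirichlet space, the martingale functional CLT is applied via ergodicity of the environment process, and the crucial step --- pointwise sublinearity of the corrector --- is obtained in \cite{ADS15} precisely by a Moser iteration on lattice balls, where the H\"older--Sobolev bookkeeping produces a self-improvement exponent strictly larger than $1$ exactly under the constraint $p^{-1}+q^{-1}<2/d$. So your identification of where the moment condition enters is correct.

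Two small remarks. First, non-degeneracy of $\Sigma^2$ in \cite{ADS15} is argued slightly differently (via the positivity of conductances and the structure of the potential space rather than a bare variational infimum), but your formulation is equivalent. Second, tightness of $X^{(n)}$ is not quite a ``byproduct'' of the Moser estimates alone; it follows once one combines the martingale FCLT (which gives tightness of the martingale part) with the uniform sublinearity of the corrector on boxes. With that clarification, your proposal matches the cited proof.
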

  For further invariance principles in the setting of random degenerate conductances we refer to \cite{ABDH13, Bi11, Ku14} and references therein; for recent results on (qualitative) stochastic homogenization of elliptic operators in divergence form with degenerate coefficients, see \cite{LNO15, NSS17, BellaFO2016, FlegelHeidaSlowik2017}.

\subsection{Main results}
In this paper our main concern is to establish a quantitative central limit theorem for the random walk $X$. For definiteness, let $\xi\in \mathbb{R}^d$ be fixed and set $$\sigma_\xi^2:=\xi \cdot \Sigma^2 \xi,$$
where $\Sigma^2$ still denotes the covariance matrix in Theorem~\ref{thm:ip}.  Then, the invariance principle of Theorem~\ref{thm:ip} yields for $\bbP$-a.e.\ $\om$,
\begin{align} \label{eq:conv_distFct}
\lim_{t\to \infty} P_0^\om\big[\xi \cdot X_t \leq \sigma_\xi x \sqrt{t} \ \big] = \Phi(x),
\end{align}
 where $\Phi(x):=(2\pi)^{-1/2} \int_{-\infty}^x e^{-u^2/2} \, du$ denotes the distribution function of the standard normal distribution. 
Our goal is to quantify the speed of convergence in \eqref{eq:conv_distFct} for $d\geq 3$ by means of a \textit{Berry-Esseen theorem}. For a general ergodic environment the speed of convergence can be arbitrarily slow, since ergodic environments may have very weak mixing properties. Therefore it is necessary to quantify the assumption of ergodicity. For this purpose, following the approach of \cite{GO11,GO12, GNO15}, we assume that $\prob$ satisfies a \emph{spectral gap estimate} with respect to a Glauber dynamics on the field of conductances.
 \begin{assumption} [Spectral gap] \label{ass:sg}
   Suppose $\prob$ is stationary, and  assume that there exists $\rho>0$ such that
   \begin{align} \tag{SG}\label{SG}
     \mean\big[(u-\mean[u])^2\big] \leq \frac 1 \rho \sum_{e\in E_d} \mean \Big[ \big( \partial_{e} u \big)^2 \Big],
   \end{align}
   for any  $u \in L^2(\Omega)$. Here, the \emph{vertical derivative} $\partial_e u$ is defined as
   \begin{equation*}
     \partial_{e} u(\omega):=\limsup\limits_{h\to 0}\frac{u(\omega+h\delta_e)-u(\omega)}{h},
   \end{equation*}
   where $\delta_e:E_d\to\{0,1\}$ stands for the Dirac function satisfying $\delta_e(e)=1$ and $\delta_e(e')=0$ if $e'\not= e$. \end{assumption}

 \begin{remark}
  (i) There is a certain freedom in the choice of the derivative that appears on the right-hand side in \eqref{SG}. In \cite{GNO15} the following vertical derivative is considered,
     \begin{equation}\label{alternative:vd}
       \partial_e u=u-\mean[u|\mathcal F_e],
     \end{equation}
     where $\mean[\cdot|\mathcal F_e]$ denotes the conditional expectation w.r.t.\ the $\sigma$-algebra $\mathcal F_e=\sigma(\omega(e'):\,e'\neq e)$. In this form the (SG) turns into an \textit{Efron-Stein inequality}, which holds for any environment generated by i.i.d.\ random variables having second moments, see e.g.\ \cite[Lemma~7]{GNOlong}. All results in our paper extend to this version of \eqref{SG}. Since \eqref{alternative:vd} does not satisfy a Leibniz rule, using  \eqref{alternative:vd} instead of the classical partial derivative appearing in \eqref{SG} leads to not very enlightening technicalities in various calculations.
   \smallskip
   
   (ii) Any stationary environment satisfying  Assumption~\ref{ass:sg} is ergodic, see \cite[Corollary~6]{GNOlong}. In a sense  Assumption~\ref{ass:sg} can be interpreted as a quantified  version of ergodicity as it implies an optimal variance decay for the semigroup associated with the ``process of the environment as seen from the particle'' induced by the simple random walk on $\bbZ^d$, cf.\ \cite[Proposition~1 and Remark~5]{GNOlong}.

\smallskip   
   
   (iii) Under Assumption~\ref{ass:sg} we have the following $p$-version of the spectral gap estimate. For $p\geq 1$ and any  $u \in L^{2p}(\Omega)$ with $\mean[u]=0$,
     \begin{align} \label{eq:sg_p}
       \mean\big[u^{2p}\big] \leq c(p,\rho)
       &\mean \bigg[ \Big( \sum_{e\in E_d} \big( \partial_e u \big)^2
       \Big)^{\! p} \bigg],
     \end{align}
     which basically follows by applying \eqref{SG} to the function $|u|^{p}$, see \cite[Lemma 11]{GNO15}.
 \end{remark}

In addition to Assumption~\ref{ass:sg} we need to assume stronger moment conditions than in Theorem~\ref{thm:ip}. We do not keep track of the precise lower bounds for $p$ and $q$ in the moment condition $M(p,q)<\infty$ that we require in our analysis, since our approach is not optimal in that direction. Recall that in view of the counterexample in \cite{BBT16} a moment condition is necessary already for the (non-quantitative) invariance principle to hold.
\medskip

Our first result is an annealed Berry-Esseen theorem in dimension $d\geq 3$ with speed $t^{-\frac 1 5 +\varepsilon}$ for $d\geq 4$ and $t^{-\frac{1}{10}+\e}$ for $d=3$, as well as a quenched Berry-Esseen theorem with the same speed but in an integrated form.

\begin{theorem}[Berry-Esseen theorem] \label{thm:main}
  Let $d \geq 3$ and suppose that Assumption~\ref{ass:sg} holds. For any $\varepsilon>0$ there exist exponents $p,q\in (1,\infty)$ (only depending on $d$, $\rho$, and $\varepsilon$) such that under the moment condition $M(p,q)<\infty$ the following hold.
 \begin{enumerate}
 \item[(i)]  There exists a constant $c=c(d,\rho, \varepsilon, M(p,q))$ such that for all $t\geq 0$, 
   \begin{align*}
     \sup_{x \in \bbR} \left|  \prob_0\big[\xi \cdot X_t \leq \sigma_\xi x \sqrt{t} \ \big] - \Phi(x)\right|  \; \leq \;  \begin{cases}
   \; c \, t^{- \frac 1 {10} + \varepsilon} & \text{if $d=3$,} \\
    \; c \, t^{-\frac 1 5 + \varepsilon} & \text{if $d\geq 4$.}
  \end{cases}
  \end{align*}
\item [(ii)] There exists a random variable $\cX=\cX(d,\rho,\varepsilon, M(p,q))\in L^1(\prob)$ such that  for $\prob$-a.e.\ $\om$, 
    \begin{align*}
  \int_0^\infty  \Big(  \sup_{x \in \bbR} \left|  P_0^\om\big[\xi \cdot X_t \leq \sigma_\xi x \sqrt{t} \ \big] - \Phi(x)\right| \Big)^{\! 5} \, (t+1)^{-\frac 1 2-\varepsilon}\, dt  \; \leq \; \cX(\om) \, < \; \infty \qquad \text{if $d=3$,}
  \intertext{and}
   \int_0^\infty  \Big(  \sup_{x \in \bbR} \left|  P_0^\om\big[\xi \cdot X_t \leq \sigma_\xi x \sqrt{t} \ \big] - \Phi(x)\right| \Big)^{\! 5} \, (t+1)^{-\varepsilon}\, dt  \; \leq \; \cX(\om) \, < \; \infty \qquad \text{if $d\geq 4$.}
  \end{align*}
 \end{enumerate}  
\end{theorem}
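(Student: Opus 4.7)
The plan is to combine the classical martingale decomposition for reversible Markov chains with quantitative ergodic input from homogenization, in the spirit of Mourrat \cite{Mo12} but now under only the moment condition $M(p,q)<\infty$ and the spectral gap \eqref{SG}. The target rate $t^{-1/5+\varepsilon}$ is the signature of Heyde--Brown/Haeusler type martingale Berry--Esseen inequalities, so the whole strategy will be calibrated to feed such an inequality with the best possible control on the martingale bracket, while the degradation to $t^{-1/10+\varepsilon}$ in $d=3$ will come from the slower decay of the environment semigroup in that dimension.

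First, I would introduce the stationary corrector $\chi_\xi$ in the direction $\xi$, i.e.\ a stationary $L^2(\prob)$-field (once $p,q$ are chosen large enough, via (SG) and \eqref{eq:sg_p}) solving $-\nabla^*\bomega(\nabla\chi_\xi+\xi)=0$. This gives the Kipnis--Varadhan type decomposition
\begin{equation*}
\xi\cdot X_t \;=\; M_t^\xi \;+\; \chi_\xi(\tau_{X_0}\omega)-\chi_\xi(\tau_{X_t}\omega),
\end{equation*}
where $M^\xi$ is a $P_0^\omega$-martingale with predictable bracket
\begin{equation*}
\langle M^\xi\rangle_t\;=\;\int_0^t \Psi_\xi(\tau_{X_s}\omega)\,ds,\qquad \mean[\mu^\omega(0)\Psi_\xi]\;=\;\sigma_\xi^2\mean[\mu^\omega(0)],
\end{equation*}
with $\Psi_\xi$ the local quadratic energy density of $y\mapsto\xi\cdot y+\chi_\xi$. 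A martingale Berry--Esseen inequality then bounds $\sup_x|\prob_0[M_t^\xi\le \sigma_\xi x\sqrt{t}]-\Phi(x)|$ by a power $1/5$ of the $L^p(\prob_0)$-norm of the relative bracket error $\frac{1}{\sigma_\xi^2 t}\langle M^\xi\rangle_t-1$ plus a negligible jump term controlled by moments of $\bomega$. Hence the task reduces to proving
\begin{equation*}
\Big\|\tfrac{1}{t}\langle M^\xi\rangle_t-\sigma_\xi^2\Big\|_{L^p(\prob_0)}\;\lesssim\;t^{-\alpha_d+\varepsilon},\qquad \alpha_d=\tfrac12\ (d\ge 4),\quad \alpha_3=\tfrac14.
\end{equation*}

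The control of this time-additive functional of the environment process is the main analytic input. I would decompose the quenched ergodic average via the environment-as-seen-from-the-particle semigroup, apply (SG) in its $p$-version \eqref{eq:sg_p} to the stationary representative, and use the near-optimal variance decay estimates for that semigroup. These are precisely the semigroup decay estimates the abstract refers to: they extend the uniformly elliptic bounds of Gloria--Neukamm--Otto \cite{GNO15} to the degenerate setting by combining heat-kernel moment bounds \`a la \cite{ADS15} with a dyadic iteration in time, at the cost of absorbing powers of $\bomega$ through the moment condition. Feeding these bounds into the martingale Berry--Esseen inequality then yields (i), with $\alpha_4=\alpha_{d\ge 4}=1/2$ producing the $1/5$ exponent and $\alpha_3=1/4$ producing the $1/10$ exponent. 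The corrector boundary term $\chi_\xi(\tau_{X_t}\omega)-\chi_\xi(\tau_{X_0}\omega)$ is of order smaller than $\sigma_\xi\sqrt{t}$ by the $L^r(\prob)$-moment bounds on $\chi_\xi$ (again from (SG) plus the moment condition), hence contributes only a lower-order error.

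For the quenched statement (ii) I would estimate the fifth $\prob$-moment of the quenched Berry--Esseen distance using the same martingale inequality applied pointwise in $\omega$, taking expectations via \eqref{eq:sg_p}, and then integrate in $t$ against the prescribed weight; Fubini produces the $L^1(\prob)$ random variable $\cX$, and the weight exponents $-\tfrac12-\varepsilon$ in $d=3$ and $-\varepsilon$ in $d\ge 4$ are precisely what makes $t\mapsto(t^{-\alpha_d+\varepsilon})^5$ integrable against them. The main obstacle will be Step~3: establishing near-optimal semigroup decay for the environment process in the degenerate regime. This requires carrying out the dyadic parabolic iteration of \cite{GNO15} while tracking exactly how many powers of $\omega(e)$ and $\omega(e)^{-1}$ are picked up at each step, so that $M(p,q)<\infty$ closes the estimate for some finite $p,q$; the $d=3$ obstruction, reflecting the marginal integrability of the Green's function, is what forces the weaker rate there.
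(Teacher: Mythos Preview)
Your overall skeleton is exactly the paper's: martingale decomposition via harmonic coordinates, Haeusler's Berry--Esseen theorem with $n=2$, reduction to the $L^2(\prob_0)$-decay of the bracket error, and then a Fubini argument for part~(ii). The corrector boundary term and the jump term are also handled as you describe (minor point: for the VSRW the environment process is reversible w.r.t.\ $\prob$ itself, so $\sigma_\xi^2=\mean[g_\xi]$ without the $\mu^\omega(0)$-weight you wrote).

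The genuine gap is in the step you label as ``apply the near-optimal variance decay estimates for that semigroup.'' The semigroup decay of Theorem~\ref{T:decay} (the degenerate analogue of \cite{GNO15}) applies to inputs of the form $P_tD^*F$ with \emph{stationary} $F$ having controllable vertical derivatives $\partial_eF$; it does \emph{not} apply to an arbitrary centered $G_\xi=g_\xi-\mean[g_\xi]$. You never explain why the carr\'e-du-champ observable $g_\xi$ admits such a divergence-form representation, and this is precisely where the paper introduces its new ingredient: the \emph{extended} corrector $(\phi,\sigma)$, where $\sigma$ is the skew-symmetric flux corrector solving $\nabla^*\sigma=q$ (Proposition~\ref{C:corrector}). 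Lemma~\ref{L:QV} then gives the explicit identity $g_\xi-\mean[g_\xi]=\nabla^*H$, where $H$ is built from $\phi$ \emph{and} $\sigma$. Moreover, because in $d=3,4$ the paper only establishes sublinear growth $\mean[|\sigma(x)|^{p_\theta}]^{1/p_\theta}\lesssim(|x|+1)^\theta$ rather than a stationary moment bound, $H$ is not stationary and Theorem~\ref{T:decay} cannot be invoked directly. Instead, Proposition~\ref{P:decay} is proved by writing $P_tG_\xi=\sum_y\nabla p(t,y)\cdot H(y)$ and combining the weighted gradient heat-kernel estimate (Proposition~\ref{P:HK}) with the growth bound on $H$. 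The exponent $\theta>\tfrac12$ forced in $d=3$ is exactly what degrades $\alpha_3$ to $\tfrac14$. Your attribution of the $d=3$ loss to ``marginal integrability of the Green's function'' is the root cause (see Remark~\ref{R:whydgeq4}), but the mechanism through which it enters is the growth of $\sigma$---an object your proposal does not mention at all.
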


  In the case of uniformly elliptic i.i.d.\ conductances an annealed
  Berry-Esseen theorem as in (i) has been proven in
  \cite{Mo12} for arbitrary dimension $d\geq 1$ with rate $t^{-1/10}$ (plus logarithmic corrections) for $d=2$, and rate $t^{-1/5}$ for $d\geq 3$ (with some logarithmic corrections for $d=3$). Theorem~\ref{thm:main} extends this result to
  \textit{unbounded} and \textit{correlated} random conductances. In Section~\ref{sec:examples} below we discuss some relevant examples
  linked to Ginzburg-Landau interface models that naturally yield correlated conductances. To our knowledge
  (ii) is the first quenched Berry-Esseen-type result
  for the random conductance model. 
\medskip

Let us anticipate that the general strategy of our proof is the same as the one in \cite{Mo12}. However, there is a genuine difference between the uniformly elliptic case treated there and the degenerate elliptic case considered here. In the uniformly elliptic case in \cite{Mo12} the comparability (on the level resolvents) of the simple random walk and the random walk in the random environment is exploited (see \cite[Proof of Theorem~5.1]{Mo12}), which comes in hand in studying the variance decay of the associated semigroup. In our case, no such principle is available. 
  Instead, following ideas in \cite{GNO15}, we first establish a semigroup estimate that invokes the \textit{gradient} of the heat kernel associated with the degenerate elliptic operator $\nabla^*\bomega\nabla$.
  Another difference to \cite{Mo12} is that we introduce a representation in divergence form for the carr\'e du champ applied to harmonic coordinates. It invokes the so called extended corrector $(\phi_i,\sigma_i)$, which has been recently introduced in the random case in \cite{GNOreg}. Our refined argument allows to improve the rates obtained in \cite{Mo12} in the uniformly elliptic case in low dimensions $d=2,3$:

\begin{theorem}[Improved Berry-Esseen theorem in the uniformly elliptic case] \label{thm:mainunif}
  Let $d \geq 2$. Suppose that Assumption~\ref{ass:sg} and uniform ellipticity hold, i.e.~$M(p,q)<\infty$ for $p=q=\infty$. Then there exists a constant $c=c(d,\rho,M(\infty,\infty))$ such that for all $t\geq 0$, 
   \begin{align*}
     \sup_{x \in \bbR} \left|  \prob_0\big[\xi \cdot X_t \leq \sigma_\xi x \sqrt{t} \ \big] - \Phi(x)\right|  \; \leq \;  \begin{cases}
       \; c \, \left(\frac{\log(t+1)}{t+1}\right)^{\!\frac 1 5} & \text{if $d=2$,} \\
       \; c \, (t+1)^{-\frac 1 5} & \text{if $d\geq 3$.}
     \end{cases}
  \end{align*}
\end{theorem}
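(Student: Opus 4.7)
The plan is to follow the martingale strategy of \cite{Mo12} but to replace the central variance estimate by a sharper one based on a divergence-form representation of the carr\'e du champ applied to harmonic coordinates, making use of the extended corrector $(\phi_i,\sigma_i)$ introduced in \cite{GNOreg}.

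\textbf{Step 1 (martingale decomposition).} Recall that under Assumption~\ref{ass:sg} and uniform ellipticity the corrector $\phi_\xi=\sum_{i=1}^d\xi_i\phi_i$ is a stationary, mean-zero random field with finite moments of every order. The harmonic coordinate $\chi^\xi(x,\omega):=\xi\cdot x+\phi_\xi(\tau_x\omega)$ satisfies $\cL^\omega\chi^\xi=0$, so that
\begin{equation*}
  M^\xi_t\ldef \chi^\xi(X_t,\omega)-\chi^\xi(X_0,\omega)
\end{equation*}
is a $P_0^\omega$-martingale with predictable quadratic variation $\langle M^\xi\rangle_t=\int_0^t\Gamma(\chi^\xi)(X_s,\omega)\,ds$, where
\begin{equation*}
  \Gamma(\chi^\xi)(x,\omega)\ldef \sum_{y}\omega(x,y)\bigl(\chi^\xi(y)-\chi^\xi(x)\bigr)^{\!2}.
\end{equation*}
By stationarity and the corrector equation, $\mean[\Gamma(\chi^\xi)(0,\cdot)]=2\sigma_\xi^2$. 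Decompose
$\xi\cdot X_t=M^\xi_t-\bigl(\phi_\xi(\tau_{X_t}\omega)-\phi_\xi(\omega)\bigr)$
and note that $\phi_\xi$ has uniformly bounded moments of all orders.

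\textbf{Step 2 (quantitative martingale CLT).} I would apply a Heyde--Brown type Berry-Esseen bound (as used in \cite{Mo12}) to $M^\xi_t/\sqrt{t}$: for any martingale with bounded jumps,
\begin{equation*}
\sup_{x\in\bbR}\bigl|\prob_0\bigl[M^\xi_t\leq \sigma_\xi x\sqrt t\bigr]-\Phi(x)\bigr|
\leq C\Bigl(\mean_0\bigl[\bigl|\langle M^\xi\rangle_t/(2\sigma_\xi^2 t)-1\bigr|^{q}\bigr]+t^{-q/2}\Bigr)^{1/(1+2q)}
\end{equation*}
for suitable $q\geq 1$. Combined with the deterministic bound $|\phi_\xi(\tau_{X_t}\omega)-\phi_\xi(\omega)|\lesssim \|\phi_\xi\|_{L^p}\cdot(\text{polynomial in }t^{1/2})$ controlled via Chebyshev (to transfer distance in distribution from $M^\xi_t$ to $\xi\cdot X_t$), the whole task reduces to bounding the variance of the quadratic variation, i.e.\ of
\begin{equation*}
  \frac{1}{t}\int_0^t\bigl(\Gamma(\chi^\xi)(X_s,\omega)-2\sigma_\xi^2\bigr)\,ds.
\end{equation*}

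\textbf{Step 3 (divergence-form representation of the carr\'e du champ).} This is the central new input. A direct computation using the extended corrector $\sigma_\xi=(\sigma_{\xi,ij})_{ij}$ (skew-symmetric in $(i,j)$, stationary in law, whose gradient is the flux corrector) yields the identity
\begin{equation*}
  \Gamma(\chi^\xi)(x,\omega)-2\sigma_\xi^2=\nabla^* F_\xi(x,\omega),
\end{equation*}
where $F_\xi$ is a stationary, bounded random vector field built from $\phi_\xi$ and $\sigma_\xi$; the key point is that skew-symmetry of $\sigma_\xi$ turns an otherwise symmetric expression into a divergence of a stationary field. I would verify this identity at the level of the one-point fluctuation using the Helmholtz decomposition underlying the construction of $(\phi_\xi,\sigma_\xi)$ in \cite{GNOreg}.

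\textbf{Step 4 (semigroup/heat-kernel estimate with a gradient).} With the divergence-form representation, the random contribution to $\langle M^\xi\rangle_t$ takes the form
\begin{equation*}
  \int_0^t\nabla^* F_\xi(X_s,\omega)\,ds=\int_0^t\sum_{e\ni X_s}F_\xi(e,\omega)\,d(\text{discrete divergence}),
\end{equation*}
which one transforms, via duality/integration by parts at the semigroup level, into an expression involving the gradient of the heat kernel (or of the resolvent) applied to the bounded stationary field $F_\xi$. Using the near-optimal semigroup/gradient decay estimates whose proof is the other main analytic content of the paper (extending \cite{GNO15} to the present setting, and already available here in the uniformly elliptic case), one obtains
\begin{equation*}
  \mean_0\!\left[\bigl(\langle M^\xi\rangle_t-2\sigma_\xi^2 t\bigr)^{\!2}\right]\lesssim
  \begin{cases}
    t\log(t+1), & d=2,\\
    t, & d\geq 3,
  \end{cases}
\end{equation*}
which is a factor of $t/\log t$ (resp.\ $t$) better than what a pointwise variance bound on $\Gamma(\chi^\xi)$ would give in these low dimensions. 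The extra gradient on the semigroup is precisely what removes the $\log$-loss in $d=3$ and halves the exponent in $d=2$.

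\textbf{Step 5 (conclusion).} Plugging the variance estimate of Step 4 into the martingale Berry-Esseen bound of Step 2, optimising in $q$, and controlling the corrector remainder $\phi_\xi(\tau_{X_t}\omega)-\phi_\xi(\omega)$ by Markov's inequality (using boundedness of moments of $\phi_\xi$ from the uniformly elliptic corrector theory) yields the stated rate $(\log(t+1)/(t+1))^{1/5}$ for $d=2$ and $(t+1)^{-1/5}$ for $d\geq 3$.

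\textbf{Main obstacle.} The delicate step is Step 3--4: identifying the correct stationary vector field $F_\xi$ that represents the carr\'e du champ fluctuation in divergence form, and then combining the resulting representation with a semigroup estimate that carries a \emph{gradient}. Without the skew-symmetric flux corrector $\sigma_\xi$ one is stuck with pointwise variance of $\Gamma(\chi^\xi)$, which in $d=2,3$ decays too slowly for the spectral-gap alone to yield an improvement over \cite{Mo12}. Verifying that the gradient semigroup estimate applies in $L^p$ with enough moments (so that one can raise the fluctuation to $p$-th power as needed by the $p$-version of (SG)) is where the uniform ellipticity assumption enters crucially, as it provides unconditional moment bounds on $\phi_\xi$ and $\sigma_\xi$.
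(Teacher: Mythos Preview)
Your strategy matches the paper's almost exactly: martingale decomposition via harmonic coordinates, the Haeusler/Heyde--Brown Berry--Esseen bound for the martingale part, the divergence-form representation of the carr\'e du champ fluctuation via the extended corrector $(\phi,\sigma)$, and the heat-kernel gradient estimate to control the variance of the quadratic variation.

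There is, however, a genuine oversight that would make your argument fail as written in $d=2$. You assert in Step~1 that ``the corrector $\phi_\xi$ is a stationary, mean-zero random field with finite moments of every order'' and in Step~3 that ``$F_\xi$ is a stationary, bounded random vector field''. This is false in dimension two: even in the uniformly elliptic i.i.d.\ case, no stationary corrector exists for $d=2$ (the paper recalls this explicitly). What one has is the anchored corrector with $\phi_\xi(\omega,0)=0$ and stationary \emph{gradient}, satisfying $\mean\bigl[|\phi_\xi(x)|^{2n}\bigr]^{1/2n}\lesssim \log^{1/2}(|x|+1)$; the same logarithmic growth holds for $\sigma_\xi$. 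Consequently the vector field $H$ (your $F_\xi$) in the identity $g_\xi-\mean[g_\xi]=\nabla^*H$ is \emph{not} stationary in $d=2$: it involves $\phi_\xi(\cdot,x)$ and $\sigma_\xi(\cdot,x)$ and therefore satisfies $\mean[|H(y)|^2]\lesssim \log(|y|+1)$ rather than being uniformly bounded.

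This is not a cosmetic detail: the logarithmic growth of $H$ is exactly the source of the $\log(t+1)$ factor in the final rate for $d=2$. In the paper's proof (Appendix, Lemma~\ref{app:L3}), after integration by parts one bounds
\[
\mean\Bigl[\bigl(P_t(g_\xi-\mean[g_\xi])\bigr)^{2}\Bigr]^{1/2}
\;\lesssim\;(t+1)^{-(\frac d4+\frac12)}\Bigl(\sum_{y}\mean[|H(y)|^2]\,m(t,y)^{-\alpha}\Bigr)^{1/2},
\]
and for $d=2$ the sum contributes an extra $\log(t+1)$ precisely because $\mean[|H(y)|^2]\lesssim\log(|y|+1)$. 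With your assumption of boundedness you would obtain the too-strong rate $(t+1)^{-1/5}$ in $d=2$. The corrector remainder in Step~5 is handled similarly: for $d=2$ one gets $\mean\bigl[E_0^\omega[|\phi_\xi(\tau_{X_t}\omega)-\phi_\xi(\omega)|]^n\bigr]^{1/n}\lesssim\log^{1/2}(t+1)$ rather than a constant (Lemma~\ref{app:L5}), which after Chebyshev is still subdominant. Once you replace ``stationary, bounded'' by ``logarithmically growing'' throughout, your plan goes through and coincides with the paper's proof.
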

The above result can be obtained by modifying the proof of Theorem~\ref{thm:main} and is based on recent estimates that are known to hold in the uniformly elliptic case. We describe the modifications in Appendix~\ref{app:mainunif} for the reader's convenience.

\medskip

The proof of Theorem~\ref{thm:main} is given in Section~\ref{S:be} below. In the following we explain the general strategy. The most classical approach to show an invariance principle is to decompose the process $X$ into a martingale part and a remainder (cf.\ e.g.\ \cite{KV86}). It turns out that for the invariance principle the remainder is negligible, and thus the scaling limit of $X$ is the same as the one for the martingale part. The latter can be analysed by martingale theory as for instance the well-established  Lindeberg-Feller functional central limit theorem or  Helland's martingale convergence theorem (see \cite{He82}).  In the proof of Theorem~\ref{thm:main} we follow the same strategy. Yet, since we seek for an estimate on the rate of convergence, at various places we need to replace qualitative arguments by estimates. 
A key result in this procedure is the following decay estimate for the semigroup  $(P_t)_{t\geq 0}$ defined by
\begin{equation*}
  P_t:L^\infty(\Omega)\to L^\infty(\Omega),\qquad (P_t u)(\omega):=\sum_{y\in\bbZ^d}p^\omega(t,0,y)\, u(\tau_y\omega),
\end{equation*}
where $p^{\om}(t,x,y) \ldef  \Prob_{x}^{\om}[X_t = y]$
denotes the transition densities or \emph{heat kernel} associated with $\cL^\om$. Throughout the paper we will often write  $p(t,y)\ldef p^\om(t,0,y)$ in short and use the fact that $ p^{\tau_z \om}(t, x, y) \;=\; p^{\om}(t, x+z, y+z)$ due to the definition of the space shifts.
The semigroup $(P_t)_{t\geq 0}$ can be interpreted as the transition semigroup of the process $(\tau_{X_t}\om)_{t\geq 0}$, which is known as the \textit{process of the environment as seen from the particle}. It is a contraction and generated by the (degenerate) elliptic operator $-D^*\bomega(0) D$, where $D$ and $D^*$ denote the \textit{horizontal derivative} and its adjoint (see Section~\ref{sec:notation} for the precise definition). Our key estimate is the following.

\begin{theorem}[Semigroup decay] \label{T:decay}
 Let $d\geq 3$ and suppose that Assumption~\ref{ass:sg} holds. Let $\varepsilon\in(0,1)$ and $n\geq\frac{d}{2\varepsilon}$. Then there exist $p, q \in (1,\infty)$ (only depending on $d,\e,n$) such that under the moment condition $M(p,q)<\infty$ the following holds. For any $F\in L^{8n}(\Omega,\mathbb R^d)$ and all $t\geq 0$ we have
  \begin{align}\label{T:decay:eq}
    \mean \big[ (P_t D^*F)^{2n} \big]^{\frac{1}{2n}}\; \leq c \;
    (1+t)^{-(\frac d 4 + \frac 1 2)+\varepsilon}\sum_{e\in E_d}\mean\big[|\partial_e F|^{8n}\big]^{\frac{1}{8n}},
  \end{align}
  where $c=c(d,\rho,n,\e,M(p,q))$.
\end{theorem}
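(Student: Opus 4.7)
The proof will follow the strategy developed in \cite{GNO15} for the uniformly elliptic case, but adapted to the degenerate setting by paying careful attention to moments. The guiding idea is that the spectral gap \eqref{SG} lets us bound the $L^{2n}(\Omega)$ norm of $P_t D^*F$ in terms of its vertical derivatives, which, via a Duhamel formula, can be expressed through the gradient of the quenched heat kernel. The decay rate $(1+t)^{-(d/4+1/2)+\varepsilon}$ will then be read off from annealed $L^p$-type decay of this gradient.

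\textbf{Step 1: reduction via the $p$-SG.}
Since $P_t D^*F$ has zero $\mathbb{P}$-expectation (because $D^*$ is the dual of $D$ with respect to $\mathbb{P}$, by stationarity), the $p$-version \eqref{eq:sg_p} of the spectral gap yields
\begin{equation*}
\mean\!\big[(P_tD^*F)^{2n}\big]^{1/n}
 \;\leq\; c_n\,\mean\!\bigg[\Big(\sum_{e\in E_d}(\partial_e P_t D^*F)^2\Big)^{\! n}\bigg]^{1/n}.
\end{equation*}
Writing $(P_t u)(\om)=\sum_{y}p^{\om}(t,0,y)\,u(\tau_y\om)$ and using discrete summation by parts on $\bbZ^d$ (permitted because $D^*F(\tau_y\om)$ is a telescoping difference in $y$), I will rewrite
\begin{equation*}
 P_t D^*F(\om) \;=\; -\sum_{i=1}^{d}\sum_{y\in\bbZ^d}\bigl(\nabla_i p^{\om}(t,0,\cdot)\bigr)(y)\,F_i(\tau_y\om),
\end{equation*}
so that one spatial gradient sits on the heat kernel rather than on $F$.

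\textbf{Step 2: vertical derivative.}
Applying $\partial_e$ to the identity above produces two contributions: a first one $A_e$ where $\partial_e$ falls on the heat kernel, and a second one $B_e$ where it falls on the translate of $F$. For $A_e$, the Duhamel formula $\partial_e e^{t\cL^\om}=-\int_0^t e^{(t-s)\cL^\om}\nabla_e^*\nabla_e\,e^{s\cL^\om}\,ds$ gives
\begin{equation*}
 \partial_e \nabla_i p^{\om}(t,0,y)\;=\;-\!\int_0^t \!\bigl(\nabla_e p^{\om}(t-s,0,\cdot)\bigr)\bigl(\nabla_e \nabla_i p^{\om}(s,\cdot,y)\bigr)\,ds,
\end{equation*}
and a second summation by parts in $y$ transfers $\nabla_i$ back onto $F$ without increasing the order of gradients, leaving $A_e$ as a time integral of products of $\nabla_e p^\om$ at times $t-s$ and $s$, paired with $F_i$. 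The term $B_e$ contains one factor $\nabla_i p^\om(t,0,y)$ multiplied by the translated vertical derivative $\partial_{e-y}F_i(\tau_y\om)$, which after relabelling $e'\!=\!e-y$ and using stationarity of $\mathbb{P}$ reduces to a convolution-type expression.

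\textbf{Step 3: annealed gradient heat-kernel estimates.}
The heart of the proof is then to establish, for a suitably large exponent $r=r(n)$,
\begin{equation*}
 \mean\!\Big[\Big(\sum_{e}\bigl|\nabla_e p^{\om}(t,0,\underline e)\bigr|^2\Big)^{r}\Big]^{1/r}\;\leq\; c\,(1+t)^{-(d/2+1)+\varepsilon}
\end{equation*}
(with analogous bounds at $\overline e$). This is the degenerate analogue of the corner-stone estimate used in \cite{GNO15}. I would derive it by combining the on-diagonal annealed Nash-Aronson bounds for $p^\om$ available for degenerate conductances under a moment condition (e.g.\ from the Moser iteration of Andres-Deuschel-Slowik) with a Caccioppoli-type inequality to convert one unit of heat-kernel decay into a gradient. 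Iterating the spectral gap on the heat semigroup (as in \cite{GNOlong}) then upgrades the $L^2(\Omega)$ bound to an $L^{2r}(\Omega)$ bound, at the price of higher moments on $\om$ and $\om^{-1}$. The exponents $p,q$ in the moment condition $M(p,q)<\infty$ are chosen large enough so that every Hölder split in this bootstrap can be absorbed.

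\textbf{Step 4: assembly and time integration.}
Plugging the gradient estimates into the expressions for $A_e$ and $B_e$, taking $L^{2n}(\Omega)$ norms via Minkowski and Hölder, and using stationarity to turn sums over $y$ of translates of $\partial_e F$ into the $L^{8n}$-norms on the right-hand side, the contribution from $B_e$ yields a clean factor $(1+t)^{-(d/4+1/2)+\varepsilon}$, while $A_e$ leads to a time-convolution
\begin{equation*}
 \int_0^t (1+t-s)^{-(d/4+1/2)+\varepsilon/2}\,(1+s)^{-(d/4+1/2)+\varepsilon/2}\,ds,
\end{equation*}
which in $d\geq 3$ is controlled by $(1+t)^{-(d/4+1/2)+\varepsilon}$ (the exponent $d/4+1/2>1$ forces integrability of the tail, the endpoint $d=3$ being the delicate case where a slightly larger $\varepsilon$ absorbs a logarithm). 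This concludes the proof modulo the gradient heat-kernel estimate of Step~3, which I expect to be the main obstacle: it is the place where the degenerate setting departs from \cite{Mo12,GNO15}, and keeping track of the moments of $\om$ and $\om^{-1}$ required at each iteration is the most delicate bookkeeping task.
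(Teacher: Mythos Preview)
Your Steps~1--3 track the paper closely, but Step~4 contains a genuine gap. After the ``second summation by parts'' in Step~2, the factor in $A_e$ at time~$s$ is not a bare heat-kernel gradient: it is
\[
\sum_{i,y}\nabla_e\nabla_i^y p^\omega(s,e,y)\,F_i(\tau_y\omega)
\;=\;-\sum_{y}\nabla_e p^\omega(s,e,y)\,(D^*F)(\tau_y\omega)
\;=\;-\nabla_e\bar v(s,e),
\]
where $v(s)=P_sD^*F$ is precisely the object whose decay you are trying to establish. Claiming that this contributes a factor $(1+s)^{-(d/4+1/2)+\varepsilon/2}$ to the convolution therefore presupposes the result, so the argument as written is circular. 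The gradient heat-kernel estimate of Step~3 controls $\sum_e|\nabla_e p^\omega(t,0,e)|^2$, but not $\nabla_e\bar v(s,e)$, which carries an additional convolution with $D^*F$.

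The paper resolves this not by closing the convolution directly, but by leaving the unknown in the integral inequality: via an auxiliary estimate (Lemma~\ref{L:decay-aux}) one obtains
\[
\mean\big[v(t)^{2n}\big]^{\frac1{2n}}\;\lesssim\;(t+1)^{-\gamma}+\int_0^t(t-s+1)^{-\gamma}\,\mean\big[|Dv(s)|^{2n\theta}\big]^{\frac1{2n\theta}}\,ds,
\]
and then couples this with a \emph{nonlinear Caccioppoli inequality} (Lemma~\ref{lem:cacc}) which, in the degenerate setting and at the price of the exponent $8n$ on the right-hand side, gives
\[
\mean\big[|Dv(t)|^{2n\theta}\big]^{\frac1{2-\theta}}\;\lesssim\;-\frac{d}{dt}\mean\big[v(t)^{2n}\big].
\]
The loop is then closed by an ODE-type lemma (Lemma~\ref{lem:ODE}) for the pair $a(t)=\mean[v(t)^{2n}]^{1/(2n)}$ and $b(t)^{2n}=-\frac{d}{dt}a(t)^{2n}$. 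This Caccioppoli/ODE closure is the key idea missing from your proposal; the heat-kernel gradient bounds of Step~3 are necessary input (and indeed occupy most of Section~\ref{sec:hk}), but do not by themselves close the argument.
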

\begin{remark}[Comparison to the uniformly elliptic case]
  For uniformly elliptic conductances, in \cite{GNO15}, Gloria, Otto and the second author established the decay estimate
  \begin{align}\label{eq:GNO15}
    \mean \big[ (P_t D^*F)^{2n} \big]^{\frac{1}{2n}} \; \leq \; c \,
    (1+t)^{-(\frac d 4 + \frac 1 2)}\sum_{e\in E_d}\mean\big[|\partial_e F|^{2n}\big]^{\frac{1}{2n}},
  \end{align}
  and deduced various estimates in stochastic homogenization based on this decay estimate. The estimate \eqref{eq:GNO15} is optimal in terms of the scaling in $t$ and in terms of the exponent of the norm on the right-hand side. 
In the degenerate elliptic case, we do not expect to get the same scaling. However, under sufficiently strong moment conditions, our estimate shows that we can get arbitrarily close to the scaling of the uniformly elliptic case. The argument in \cite{GNO15} crucially relies on a deterministic parabolic regularity estimate for $\nabla p^\om$ with optimal scaling in $t$. In the degenerate elliptic case this estimate is not valid. We replace it by a non-deterministic estimate on $\nabla p^\om$ with near-optimal scaling, see Proposition~\ref{P:HK} below, which we combine with an interpolation argument that exploits the contraction property of the semigroup. The latter is the reason for the exponent $8n$ in our estimate. 
\end{remark}
\begin{remark}
  Recently, a similar estimate has been obtained in \cite{GM16} for
  conductances uniformly bounded from above satisfying a moment
  condition on $\om(e)^{-1}$. The proofs of both Theorem~\ref{T:decay}
  and the results in \cite{GM16} follow the strategy in
  \cite{GNO15}. However, in \cite{GM16} the required on-diagonal
  estimate on the heat kernel is derived from the anchored Nash
  inequality established in \cite{MO16}, which makes a uniform upper
  ellipticity necessary, while in our setting the analogue heat kernel
  bound in Lemma~\ref{L:ondiagHK} can be deduced from the upper off-diagonal heat kernel estimates in
  \cite{ADS16a, ADS17}.  For the opposite case, i.e.\ conductances bounded
  from below having quadratic moments, we refer to \cite{dBM15}, where
  the simpler situation $P_tf$ (instead of $P_tD^*F$) is studied.
\end{remark}
We will use Theorem~\ref{T:decay} to quantify the convergence of the martingale part in the decomposition of the process $X$ mentioned above, which relies on \textit{harmonic coordinates} 
$\Psi: \Omega \times \bbZ^d \rightarrow \bbR^d$ defined as
\begin{equation}\label{eq:harm_coord}
  \Psi=(\psi_1,\ldots,\psi_d),\qquad \psi_i(\om,x)\; \ldef \; x_i+\phi_i(\om,x)-\phi_i(\om,0), \qquad i=1,\ldots,d,
\end{equation}
where $\phi_i$ denotes the \textit{corrector} from stochastic homogenization defined in Proposition~\ref{C:corrector} below. Roughly speaking, $\phi_i$ is a sublinearly growing solution to the equation $\nabla^*\omega(\nabla\phi_i+e_i)$. The corrector $\phi_i$ is a fundamental object in the qualitative and quantitative theory of stochastic homogenization, see  e.g.\ the seminal work by Papanicolaou-Varadhan \cite{PV81} or  \cite{GNO15} for quantitative results. In particular, the covariance matrix $\Sigma^2$  of the limiting process in Theorem~\ref{thm:ip} may be represented in terms of $\phi_i$ or $\psi_i$, respectively,  as
%
%
\begin{equation}\label{eq:hom_coeff}
  \Sigma^2_{ij}\; = \; \mean\Big[ \sum_{y\in \bbZ^d}
 \omega(0,y) \, \psi_i(\om,y) \, \psi_j(\om, y) \Big],\qquad i,j=1,\ldots,d,
\end{equation}
see \cite[Proposition~2.5]{ADS15}. It is well-known that for $\prob$-a.e.\ $\om$,
\begin{enumerate}[(i)]
\item the process $M=\Psi(\om,X)$ is a martingale (see Corollary~\ref{cor:qV} below) and thus features an invariance principle;
\item the remainder $X-M$ vanishes in the scaling limit due to the sublinear growth of the corrector.
\end{enumerate}
For our purpose we need to quantify both the speed of convergence in (i) and the smallness of the remainder in (ii). For this reason we establish the existence of high moments of the corrector $\phi_i$ (and an additional flux corrector $\sigma_i$, which we explain below). In the following we say that a random variable $u$ is {\it stationary}, if $u(\omega,x+y)=u(\tau_y\omega,x)$ for all $x,y\in\bbZ^d$ and $\prob$-a.e. $\omega\in\Omega$.
\begin{prop}[Extended correctors and moment bounds]\label{C:corrector}
 Let $d\geq 3$ and suppose that Assumption~\ref{ass:sg} is satisfied. Then there exist $p,q\in(1,\infty)$ (only depending on $d$ and, if applicable, on the upcoming parameters $n$, $\theta$, $p_\theta$) such that under the moment condition $M(p,q)<\infty$ the following hold for $i=1,\ldots,d$.
  \begin{enumerate}[(a)]
  \item (Existence of a non-stationary extended corrector).
    There exist a (unique) random scalar field $\phi_i:\Omega\times\bbZ^d\to\bbR$ and a (unique) random matrix field $\sigma_{i}:\Omega\times\bbZ^d\to\bbR^{d\times d}$ with the following properties.
    \begin{itemize}
    \item[(a.1)] For $\prob$-a.e.\ $\omega$ we have
      \begin{subequations}
        \begin{align}\label{eq:corrector-equation1}
          \nabla^*\omega \, \big(\nabla\phi_i+e_i\big)&=0\qquad  \text{on }\bbZ^d,\\
          \label{eq:corrector-equation2}
          \nabla^*\sigma_i&=q_i \qquad \!\!\text{on }\bbZ^d,\\
          \label{eq:corrector-equation3}
          \nabla^*\nabla \sigma_{i}&=Sq_i \quad \text{on }\bbZ^d,
        \end{align}
      \end{subequations}
      where $q_i:\Omega\times\bbZ^d\to\bbR^d$ and $Sq_i:\Omega\times\bbZ^d\to\bbR^{d\times d}$ are defined by
      \begin{align*}
        q_i&:=\;\bomega\, \big(\nabla\phi_i+e_i\big)-\bomega_{\hom}e_i,\\
        Sq_i&:=\;(Sq_i)_{k\ell}=\nabla_k q_{i\ell}-\nabla_\ell q_{ik},
      \end{align*}
      and $\bomega_{\hom}\in\bbR^{d\times d}$ denotes the \emph{homogenized coefficient} matrix characterised by
      \begin{equation} \label{eq:def_omhom}
        \bomega_{\hom}e_i\;=\; \mean\left[\omega\, \big(\nabla\phi_i+e_i\big)\right],\qquad i=1,\ldots,d.
      \end{equation}
      (Above, the divergence $\nabla^*\sigma_i$ is defined as the vector with entries $(\nabla^*\sigma_i)_k=\sum_{\ell=1}^d\nabla^*_\ell\sigma_{ik\ell}$).
    \item[(a.2)] $\prob$-a.s. the fields satisfy $\phi_i(0)=0$, $\sigma_i(0)=0$, and $\sigma$ is skew-symmetric, i.e.\ $\sigma_{i\alpha\beta}=-\sigma_{i\beta\alpha}$.
    \item[(a.3)] The  gradient fields $\nabla\phi_i$ and $\nabla\sigma_{i}$ are stationary, have finite $2$nd moments, and vanishing expectation, i.e.
      \begin{equation*}
        \mean\big[\nabla\phi_i\big]\;=\;0, \qquad \mean\big[\nabla\sigma_{i}\big]\;=\;0, \qquad \mean\Big[\big|\nabla\phi_i\big|^{2}+ \big|\nabla\sigma_{i}\big|^{2}\Big] \; \leq \; c,
      \end{equation*}
      for some $c=c(d,\rho,M(p,q))$.
    \end{itemize}
  \item (Moment bound and stationary representation for $\phi_i$). Let $n\in\bbN$. Then there exists a random variable $\phi^0_i$ with $\mean[\phi^0_i]=0$ and 
    \begin{equation}\label{eq:momentboundphi}
      \mean\Big[ \big|\phi^0_i\big|^{2n}+\big|D\phi^0_i\big|^{2n}\Big]^{\frac{1}{2n}} \; \leq \; c,
    \end{equation}
    for some $c=(d,\rho,n,M(p,q))$ and we have the stationary representation
    \begin{equation*}
      \phi_i(\omega,x)=\phi_i^0(\tau_x\omega)-\phi_i^0(\omega).
    \end{equation*}
  \item (Sublinear growth of $\sigma$). Let $\theta$ satisfy
    \begin{equation*}
      \begin{array}{ll}
        \theta>\frac{1}{2} & \mbox{if } d=3,\\
        \theta>0& \mbox{if } d=4,\\
        \theta=0& \mbox{if } d\geq 5.
      \end{array}
    \end{equation*}
    Then there exists $p_\theta>2$ such that
    \begin{equation}\label{eq:sublinsigma}
      \mean\Big[\big|\sigma_i(x)\big|^{p_\theta}\Big]^{\frac{1}{p_\theta}}\; \leq \; c \, (|x|+1)^\theta,
    \end{equation}
    for some $c=c(d,\rho,\theta,p_\theta,M(p,q))$.
\end{enumerate}
\end{prop}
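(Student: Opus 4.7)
The plan is to follow the strategy developed in \cite{GNO15,GNOreg} for the uniformly elliptic case, substituting the deterministic $t^{-d/4-1/2}$ semigroup decay used there by the near-optimal stochastic decay from Theorem~\ref{T:decay}, and combining it with the $p$-version of the spectral gap \eqref{eq:sg_p}. For Part~(a), I would start from the Lax-Milgram regularization $\mu\phi_{i,\mu}^0 - D^*\bomega(0)(D\phi_{i,\mu}^0+e_i)=0$, uniquely solvable in $L^2(\Omega)$ once $\omega$ has a first moment. A uniform-in-$\mu$ energy estimate of the form $\mathbb{E}[\omega(0,e_k)|D_k\phi_{i,\mu}^0 + \delta_{ik}|^2] \leq c$ permits passage to the limit $\mu\downarrow 0$, and integration along edge paths with the normalization $\phi_i(\omega,0)=0$ produces the non-stationary corrector whose stationary gradient solves \eqref{eq:corrector-equation1}. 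The flux $q_i=\bomega(\nabla\phi_i+e_i)-\bomega_{\hom}e_i$ is then stationary, mean-zero (by \eqref{eq:def_omhom}), and satisfies $\nabla^* q_i=0$; hence $Sq_i$ is stationary, mean-zero, and skew-symmetric in its indices. The matrix field $\sigma_i$ can be built componentwise from the Poisson equation \eqref{eq:corrector-equation3}, and \eqref{eq:corrector-equation2} will then follow from $\nabla^* q_i=0$ by direct computation.

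For Part~(b), the crucial identity is the semigroup representation
\[
\phi_i^0 \;=\; \int_0^\infty P_t\, D^*\!\bigl(\bomega(0)e_i\bigr)\, dt,
\]
obtained by inverting the generator $-D^*\bomega(0)D$ of $P_t$ and using that $D^*(\bomega(0)e_i)$ is mean-zero. Applying Theorem~\ref{T:decay} with $F=\bomega(0)e_i$ (which lies in $L^{8n}(\Omega)$ provided $M(8n,q)<\infty$) bounds the integrand by $c(1+t)^{-(d/4+1/2)+\varepsilon}$; since $d/4+1/2>1$ for $d\geq 3$, Minkowski's inequality in integral form yields $\|\phi_i^0\|_{L^{2n}}\leq c$. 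The $L^{2n}$ bound on $D\phi_i^0$ would then follow either by applying the same semigroup argument to $DF=D(\bomega(0)e_i)$ (since $D$ commutes with $P_t$), or by upgrading the energy estimate from Part~(a) to higher moments via \eqref{eq:sg_p} combined with a sensitivity analysis of $\partial_e D\phi_{i,\mu}^0$ through the regularized corrector equation.

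Part~(c) is where I expect the main obstacle. Moments of the stationary gradient $\nabla\sigma_i$ can be controlled by an analogue of Part~(b), now based on the heat semigroup on $\mathbb{Z}^d$ applied to the mean-zero stationary field $Sq_i$, whose moments are controlled by Part~(b). To pass from $\nabla\sigma_i$ to $\sigma_i(x)$ itself, I would estimate the variance via the spectral gap, $\mathbb{E}[|\sigma_i(x)|^2]\leq\rho^{-1}\sum_e\mathbb{E}[|\partial_e\sigma_i(x)|^2]$; each $\partial_e\sigma_i(x)$ satisfies a Poisson equation driven by $\partial_e Sq_i$, and the sensitivity of $q_i$ with respect to $\omega(e)$ is controlled via the corrector equation. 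Truncating the associated heat-kernel integral at scale $t\sim|x|^2$ and combining with the near-optimal decay of Theorem~\ref{T:decay} should give
\[
\mathbb{E}\bigl[|\sigma_i(x)|^2\bigr]^{1/2}\;\leq\; c\,(|x|+1)^\theta,
\]
with the dimensional thresholds matching the statement: $\theta=0$ for $d\geq 5$ (convergent integral), $\theta>0$ arbitrary for $d=4$ (logarithmic divergence), and $\theta>1/2$ for $d=3$ (polynomial divergence, further worsened by the $\varepsilon$-loss in Theorem~\ref{T:decay}). The upgrade from $L^2$ to $L^{p_\theta}$ then uses the $p$-spectral gap \eqref{eq:sg_p} applied to $\sigma_i(x)-\mathbb{E}[\sigma_i(x)]$. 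The hard part will be precisely this interplay: the $\varepsilon$-loss in Theorem~\ref{T:decay} is intrinsic to the degenerate setting, and one must absorb it into the growth exponent $\theta$ in low dimensions while simultaneously tracking which moment condition $M(p,q)<\infty$ suffices for the sensitivity calculations at each level.
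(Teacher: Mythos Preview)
Your outline for (a) and (b) is essentially the paper's route. Two small corrections: first, $D$ does \emph{not} commute with $P_t$ (the generator $-D^*\bomega(0)D$ involves $\bomega(0)$, which is not shift-invariant as a function on $\Omega$), so that shortcut fails. But this is irrelevant: once $\phi_i^0\in L^{2n}(\Omega)$ is established via the semigroup integral and Theorem~\ref{T:decay}, the bound $\|D_j\phi_i^0\|_{L^{2n}}\le 2\|\phi_i^0\|_{L^{2n}}$ is immediate from stationarity of $\prob$ and the triangle inequality. Second, for the construction of $\sigma_i$ the paper indeed solves a regularized constant-coefficient problem $(\tfrac1T+D^*D)\sigma^0_{T,k\ell}=D^*Q^0_{k\ell}$ and passes to the limit, as you propose.

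Part~(c) is where your plan diverges from the paper and has a real gap. Theorem~\ref{T:decay} is \emph{not} the tool used here; its role in the paper is confined to constructing $\phi_i^0$. For $\sigma_i$ the paper instead relies on the annealed Green's function estimate Corollary~\ref{L:AG}, namely $\mean\big[|\nabla\nabla G(x,0)|^n\big]^{1/n}\lesssim(|x|+1)^{-d+\varepsilon+2(n-1)/n}$, to control the sensitivity $|\partial_e D\phi^0|\le |\nabla\nabla G(0,\ue)|\,|D\phi^0(\tau_{\ue}\omega)+\xi|$ (obtained by differentiating the corrector equation and inverting via the random Green's function). The dimensional thresholds in the statement arise precisely from the suboptimal exponent $2(n-1)/n$ in this annealed estimate, \emph{not} from the $\varepsilon$-loss in Theorem~\ref{T:decay}; see Remark~\ref{R:whydgeq4}. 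Moreover, the paper does not truncate a heat-kernel integral at $t\sim|x|^2$. It uses an averaging decomposition: with $L=|x|+1$, one writes $\sigma_i(x)=v_L(\tau_x\omega)+v'_L(\omega)-v_L(\omega)$ where $v_L$ compares $\sigma_i$ to its average over $B(L)$ and $v'_L$ compares two translated averages. Each piece is represented as $\sum_y\nabla\sigma_i(y)\cdot\nabla f(y)$ for a test function $f$ solving a constant-coefficient Poisson equation, so that after applying $\partial_e$ and using $\nabla^*\nabla\partial_e\sigma_i=\nabla^*\partial_e Q$ one reduces to $\sum_y\partial_e Q(y)\cdot\nabla f(y)$. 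The $p$-spectral gap then combines with a Young convolution estimate, the decay of $\nabla f$, and the pointwise annealed bound on $\partial_e Q$ (via Corollary~\ref{L:AG}) to produce the growth rates. Your proposed direct estimation of $\sum_e\mean[|\partial_e\sigma_i(x)|^2]$ would ultimately need the same annealed $\nabla\nabla G$ input, and without the averaging step it is not clear how to organize the resulting double sum so as to recover the stated exponents.
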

While the sublinearity of the corrector can be established for general ergodic environments satisfying the relatively weak assumptions in Theorem~\ref{thm:ip} (cf.\  \cite{ADS15}), the existence of high moments of $\phi_i$ and $\nabla\phi_i$ as in Proposition~\ref{C:corrector} only holds true under sufficiently strong mixing assumptions. In dimension $d=2$, even in the case of uniformly elliptic, i.i.d.\ conductances, the stationary version of the corrector does not exist. On the other hand, for $d\geq 3$, in the uniformly elliptic case and under sufficiently strong mixing assumptions, the stationary representations of $\phi_i$ \textit{and} $\sigma_i$ exist and satisfy (high) moment bounds. This has been first achieved on the level of $\phi_i$ in \cite{GO11} (see also \cite{GNO15} where bounds are obtained via a decay estimate similar to Theorem~\ref{T:decay}, and see \cite{BMN17} for moment bounds on $\sigma_i$ and $\nabla\sigma_i$). To our knowledge Proposition~\ref{C:corrector} is the first result on moment bounds in the degenerate elliptic setting. The dependence on the growth exponent on the dimension in statement (c) is non-optimal. In fact, we expect the statements to hold for $d\geq 3$ (with $\theta=0$ and arbitrary $p_\theta<\infty$) as in the uniformly elliptic case; see Remark~\ref{R:whydgeq4}.
\medskip

Using the moments of $\phi_i$ and ergodicity we obtain a sufficiently good control on  the remainder, see Proposition~\ref{prop:conv_corr} below. In order to quantify the convergence of the martingale part in (i),  we use a Berry-Esseen estimate for martingales in \cite{Ha88} (see Theorem~\ref{thm:be_hb} below). It requires to quantify the speed of convergence of $E_0^\om \Big[\Big| \frac{\langle \xi \cdot M \rangle_t} t -\sigma_\xi^2 \Big|^2 \Big]$ towards zero, where $\langle \xi \cdot M \rangle$ denotes the quadratic variation process of the martingale $\xi\cdot M$. 
For that purpose we establish the following result which also exploits the moment bounds on $(\phi_i,\sigma_i)$ obtained in Proposition~\ref{C:corrector}.
\begin{prop}\label{P:decay}
  Let $d\geq 3$, $\e>0$, and 
  suppose Assumptions~\ref{ass:sg} is satisfied. Then there exist $p, q \in (1,\infty)$ (only depending on $d$ and $\e$) such that under the moment condition $M(p,q)<\infty$ the following holds. For any direction $\xi\in\bbR^d$ with $|\xi|=1$, we denote by $\phi_\xi$ the corrector associated with $\xi$, i.e. $\phi_\xi=\sum_{i=1}^d\xi_i\phi_i$ with $\phi_i$ as in Proposition~\ref{C:corrector}, and by
  \begin{equation}\label{eq:harmonic-coordinate}
    \psi_\xi(\omega, x):=\xi\cdot x+\phi_\xi(\omega,x)-\phi_\xi(\omega,0)
  \end{equation}
  the associated harmonic coordinate. Consider
  \begin{equation*}
    g_\xi:\Omega\to\bbR,\qquad g_\xi(\omega)\;:= \; \Gamma^\omega(\psi_\xi(\omega,\cdot))(0),
  \end{equation*}
  where $\Gamma^\om$ denotes the op\'erateur carr\'e du champ associated with $\mathcal{L}^\om$, i.e. $$\Gamma^\om f(x) \; \ldef \; \left[\cL^\om f^2 - 2 f \cL^\om f \right](x).$$
  Then there exists a constant
  $c=c(d,\rho, \theta, M(p,q))$ such that
  \begin{align*}
    \mean\left[\Big(P_t \big(g_\xi-\mean[g_\xi]\big)\Big)^{\!2}\right]^{\frac12} \; \leq \; 
    \begin{cases} 
    c\, (t+1)^{-\frac{1}{4}+\e} & \text{if $d=3$,} \\
     c\, (t+1)^{-\frac{1}{2}+\e} & \text{if $d\geq 4$}.
    \end{cases}
  \end{align*}
\end{prop}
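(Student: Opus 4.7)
The strategy is to rewrite $g_\xi-\mean[g_\xi]$ in horizontal divergence form $D^*F_\xi$ for a suitable random vector field $F_\xi:\Omega\to\bbR^d$ and then invoke the semigroup decay of Theorem~\ref{T:decay}. The structural input making this possible is the extended corrector $(\phi_\xi,\sigma_\xi)$ of Proposition~\ref{C:corrector}: the flux $q_\xi=\bomega(\nabla\phi_\xi+\xi)-\bomega_{\hom}\xi$ is centered and divergence-free, and it admits the skew-symmetric vector potential $\sigma_\xi$ with $\nabla^*\sigma_\xi=q_\xi$. This is what allows pointwise fluctuations of the carr\'e du champ at the origin to be recast as a ``horizontal'' divergence on $\Omega$.

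Concretely, since $\psi_\xi(\omega,0)=0$ and $\mathcal{L}^\omega\psi_\xi=0$, the carr\'e du champ identity gives $g_\xi(\omega)=\mathcal{L}^\omega(\psi_\xi^2)(0)$. Expanding $\psi_\xi=\xi\cdot x+\phi_\xi-\phi_\xi(0)$, applying $\mathcal{L}^\omega=-\nabla^*\bomega\nabla$, and combining the discrete Leibniz rule, the corrector equation $\nabla^*\bomega(\nabla\phi_\xi+\xi)=0$, and the vector potential identity $\nabla^*\sigma_\xi=q_\xi$, one obtains a representation of the form
\begin{equation*}
g_\xi(\omega)-\mean[g_\xi] \; = \; D^*F_\xi(\omega),
\end{equation*}
where $F_\xi$ is a local algebraic expression in $\bomega(0)$, $\nabla\phi_\xi(\omega,0)$, and finitely many nearest-neighbour increments of $\sigma_\xi$; the constant being subtracted is $\mean[g_\xi]=2\xi\cdot\bomega_{\hom}\xi$, which matches the definition of $\bomega_{\hom}$ in \eqref{eq:def_omhom}. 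Applying Theorem~\ref{T:decay} with $n=1$ then gives
\begin{equation*}
\mean\bigl[(P_tD^*F_\xi)^2\bigr]^{1/2}\;\leq\;c\,(1+t)^{-d/4-1/2+\varepsilon}\sum_{e\in E_d}\mean\bigl[|\partial_eF_\xi|^8\bigr]^{1/8},
\end{equation*}
and the $p$-spectral gap inequality \eqref{eq:sg_p}, combined with Proposition~\ref{C:corrector}(b) and the equations satisfied by $\partial_e\phi_\xi$ and $\partial_e\sigma_\xi$, controls the moments of $\partial_eF_\xi$ provided $M(p,q)<\infty$ for sufficiently large $p,q$.

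For $d\geq 5$, where $\theta=0$ is admissible with arbitrarily large $p_\theta$ in Proposition~\ref{C:corrector}(c), the field $\sigma_\xi$ itself is stationary with moments of every order, $F_\xi$ is globally stationary, and the above estimate produces the decay $(1+t)^{-d/4-1/2+\varepsilon}$, which is strictly stronger than the claimed $(1+t)^{-1/2+\varepsilon}$. For $d=3,4$ only $\nabla\sigma_\xi$ is stationary while $\sigma_\xi$ itself grows sublinearly with exponent $\theta$, so the $F_\xi$ above, which is anchored at the origin, is not globally stationary. The remedy is to truncate the $\sigma_\xi$-factors at a spatial scale $L\asymp\sqrt{t}$ (the diffusive scale of the random walk), apply Theorem~\ref{T:decay} to the resulting stationary approximation, and control the truncation error via Proposition~\ref{C:corrector}(c) and diffusive heat-kernel bounds. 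Optimising $L$ in $t$ yields the saturation at $(1+t)^{-1/2+\varepsilon}$ for $d\geq 4$ and the further loss to $(1+t)^{-1/4+\varepsilon}$ for $d=3$, reflecting the threshold $\theta>1/2$. The main obstacle I expect is the algebraic step producing the divergence identity with $F_\xi$ built solely from stationary local data of the extended corrector, and, in $d=3$, the bookkeeping of the truncation argument that balances the $\sigma$-growth against the heat-kernel decay.
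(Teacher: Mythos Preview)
Your structural intuition---rewriting $g_\xi-\mean[g_\xi]$ in divergence form via the extended corrector---is exactly right, and matches the paper's Lemma~\ref{L:QV}. But the route you propose through Theorem~\ref{T:decay} has a genuine gap, and the paper in fact does \emph{not} invoke Theorem~\ref{T:decay} here at all.

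The paper's argument works directly with the spatial representation $g_\xi-\mean[g_\xi]=\nabla^*H(\omega,0)$ from Lemma~\ref{L:QV}. After writing $P_t(g_\xi-\mean[g_\xi])=\sum_y\nabla p(t,y)\cdot H(y)$, one applies Cauchy--Schwarz with weight $m(t,y)^{\alpha}$ and then the weighted gradient heat-kernel estimate of Proposition~\ref{P:HK}. The only input needed on $H$ is the \emph{moment bound} $\mean[|H(y)|^2\cdot(\text{aux.})]\lesssim(|y|+1)^{2\theta}$, which follows from the growth estimate \eqref{eq:sublinsigma} on $\sigma$ and the moment bound \eqref{eq:momentboundphi} on $\phi^0$. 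No sensitivity estimate on $H$, no truncation, and no stationarity of $H$ are required; the growth in $y$ is absorbed by the weight $m(t,y)^{-\alpha}$, and the dimension-dependent threshold for $\theta$ is precisely what produces the split between $d=3$ and $d\geq 4$.

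Your route through Theorem~\ref{T:decay} runs into trouble at the hypothesis $\sum_{e\in E_d}\mean[|\partial_e F_\xi|^{8n}]^{1/8n}<\infty$. First, the theorem requires $n\geq d/(2\varepsilon)$, so $n=1$ is never admissible for small $\varepsilon$; you must take $n$ large. Second, and more seriously, $F_\xi$ is built from $D\phi^0$, $\phi^0$ (and, where it exists, $\sigma^0$), so $\partial_e F_\xi$ inherits the decay of $\partial_e D\phi^0$ and $\partial_e\phi^0$, which by \eqref{eq:pfcorr4} and Corollary~\ref{L:AG} are controlled by $\nabla\nabla G$ and $\nabla G$. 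For the $8n$-th moments with $n$ large, the suboptimal exponent in Corollary~\ref{L:AG} degrades to roughly $(|\underline e|+1)^{-(d-2)}$, which is \emph{not} summable over $E_d$. So the right-hand side of Theorem~\ref{T:decay} is not shown to be finite, and the step ``the $p$-spectral gap inequality \ldots\ controls the moments of $\partial_e F_\xi$'' does not close. The truncation scheme you sketch for $d=3,4$ would inherit the same summability issue for the truncated $F_\xi^L$, and adds a layer of bookkeeping that the paper's weighted heat-kernel approach avoids entirely.
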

In the proof of the proposition we make use of the field $\sigma_i$ defined in Proposition~\ref{C:corrector}, which allows us to represent the quadratic variation of $M$ in divergence form, see Lemma~\ref{L:QV} below. Similarly as $\phi_i$, the field $\sigma_i$ is a classical object in periodic homogenization. Yet, in the stochastic case it has been utilised  just recently, see  e.g.\ \cite{GNOreg,BMN17}.

\begin{remark}
  The exponent $1/5$ in Theorem~\ref{thm:main} is non-optimal. However,  the decay rates in Theorem~\ref{T:decay} and Proposition~\ref{P:decay} (for $d\geq 4$) are optimal apart from the $\varepsilon$ which appears due to the degeneracy of the environment. The exponent $1/5$ then arises by the application of the general Berry-Esseen theorem for martingales (see Theorem~\ref{thm:be_hb} below with the choice $n=2$). In $d\geq 3$ it can be deduced from the moments bounds on the corrector in Proposition~\ref{C:corrector} that the  jumps of the martingale part are in $L^n(P_0^\om)$ for any $n\in \bbN$ (cf.\ Proposition~\ref{prop:estJ} below). In such a situation the results in \cite{Mo12a} show that the decay $E_0^\om \Big[\Big| \frac{\langle \xi\cdot M \rangle_t} t -\sigma_\xi^2 \Big|^2 \Big]^{1/5}$ is optimal (for the choice $n=2$ in Theorem~\ref{thm:be_hb}). Thus, a possibility to improve the exponent $1/5$ (within the above approach) is to apply the  Berry-Esseen theorem for martingales in Theorem~\ref{thm:be_hb} for larger values of $n$ which would require control on higher moments of $\big| \frac{\langle \xi\cdot M \rangle_t} t -\sigma_\xi^2 \big|$ (cf.\ the discussion in \cite[page 6]{Mo12}). An alternative PDE-approach towards an optimal result would be to estimate the difference  between the heterogeneous and homogenized parabolic Green's function. Very recently, Armstrong et al.~\cite[Theorem~9.11]{AKM17} obtained such an estimate for the equation on $\mathbb R^d$ in the case of uniformly elliptic coefficients and under a finite range of dependence assumption. The estimate suggests that the exponent in Theorem~\ref{thm:main} (in the uniformly elliptic, i.i.d.~case) can be improved to $1/2$ (up to a logarithmic correction for $d=2$). The rate $1/2$ is the best one can hope for, since it is the convergence rate for the simple random walk.
The argument in \cite{AKM17} relies on a large scale regularity theory for elliptic operators, which is not established in the degenerate case yet. For general non-uniformly elliptic and (possibly strongly) correlated coefficients, we expect the optimal rate of convergence to depend on the parameters $p$ and $q$ as well as on the mixing behaviour of the environment. We remark that the qualitative statement in form of a quenched invariance principle holds for general i.i.d.\ conductances (cf.\ \cite{ABDH13}) and is conjectured to hold for ergodic conductances under the moment condition $M(1,1)<\infty$, which is known to be necessary in the case of general ergodic environments (cf.\ \cite{BBT16}). 
\end{remark}

\smallskip

\subsection{Notation} \label{sec:notation} 
We finally introduce some further notation used in the paper. We write $c$ to denote a positive, finite constant which may change on each appearance. Random constants depending on $\om \in \Omega$ will be denoted by calligraphic letters such as $\cX$, $\cY$ etc.  Further, $\lesssim$ means $\leq$ up to a constant depending only on some quantities  specified in the particular context. 

The cardinality of a set $A \subset \bbZ^d$ will be denoted by $\# A$.
For $x=(x_1,\ldots, x_d)\in \bbR^d$ let $|x|=\sum_{i=1}^d |x_i|$. We denote by $B(R):=\{x\in \bbZ^d:|x|<R\}$ the ball with radius $R>0$ in $\bbZ^d$. We denote by $\ell^r(\bbZ^d)$ and $\ell^r(E_d)$ $(r\geq 1)$ the usual $\ell^r$ spaces for functions on $\bbZ^d$ and $E_d$, respectively. 
For any edge $e\in E_d$ we denote by $\ue,\oe\in\bbZ^d$ the unique vertices such that $e=\{\ue,\oe\}$ and $\oe-\ue\in\{e_1,\ldots,e_d\}$.
 For $f:\bbZ^d\to\bbR$ and $e\in E_d$ we define the \textit{discrete derivative} 
\begin{align*}
  \nabla f: E_d\rightarrow \bbR, \qquad 
  \nabla f(e)\; \ldef \; f(\oe)-f(\ue),
\end{align*}
and note that for $f,g:\bbZ^d\to\bbR$, the discrete product rule takes the form 
\begin{align} \label{eq:discr_prod}
 \nabla(fg)(e)=f(\oe)\, \nabla g(e)+g(\ue)\, \nabla f(e).
\end{align}
We define the \textit{discrete divergence} of a function $F:E_d\to\bbR$ by
\begin{align*}
\nabla^*F(x):=\sum_{e\in E_d\atop\,\oe=x}F(e)-\sum_{e\in E_d\atop\ue=x}F(e)=\sum_{i=1}^dF(\{x-e_i,x\})-F(\{x,x+e_i\}). 
\end{align*}
Since for all $f\in \ell^2(\bbZ^d)$ and $F \in \ell^2(E_d)$ we have
\begin{align} \label{eq:adjoint}
\langle \nabla f, F\rangle_{\ell^2(E_d)} = \langle f, \nabla^*F \rangle_{\ell^2(\bbZ^d)},
\end{align}
$\nabla^*$ can be seen as the adjoint of $\nabla$.  
Note that the generator $\cL^\om$ defined in \eqref{eq:defL} above is a finite-difference operator in divergence form as it can be rewritten as
  \begin{align*}
    \big(\cL^{\om} f)(x)
    \;=\;
   - \nabla^*(\omega\nabla f)(x).
  \end{align*}
 
We tacitly identify scalar functions defined on $E_d$ with vector-valued functions on $\bbZ^d$. In particular, given $F:E_d\to\bbR$ we write $F(x)\ldef \big( F(\{x,x+e_1\}), \ldots,F(\{x,x+e_d\})$; and given $f:\bbZ^d\to\bbR$ we write $\nabla f(x):=(\nabla_1 f(x),\ldots,\nabla_d f(x))$ with $\nabla_i f(x) \ldef f(x+e_i)-f(x)$. Likewise, for $f: \bbZ^d \times \bbZ^d\to\bbR$ we define the \emph{second mixed discrete derivatives} as 
\begin{align*}
  \nabla\nabla f: E_d\times E_d \rightarrow \bbR, \qquad \nabla\nabla f(e,e')\; \ldef \; f(\oe,\oe')-f(\ue,\oe')-f(\oe,\ue')+f(\ue,\ue'),  
\end{align*}
and denote by $\nabla\nabla f(x,y)=(\nabla_i\nabla_j f(x,y))_{i,j=1,\ldots d}$ the matrix valued function with entries $\nabla_i\nabla_j f(x,y):=\nabla\nabla f(\{x,x+e_i\},\{y,y+e_j\})$. Further, we will also use the abbreviation
\begin{align} \label{eq:def_normom}
\big|F(x)\big|^2_\bomega \; \ldef \; \sum_{i=1}^d \om(x,x+e_i) \, \big| F( \{x,x+e_i \})\big|^2, \qquad x\in \bbZ^d.
\end{align}

With any random variable $\zeta: \Omega \rightarrow \bbR$  we associate its $\prob$-\emph{stationary extension}
$\bar \zeta: \Omega \times \bbZ^d \rightarrow \bbR$ via $ \bar \zeta(\om,x) \ldef \zeta(\tau_x \om)$. Conversely, we say that a random field $\tilde \zeta: \Omega \times \bbZ^d \rightarrow \bbR$ is $\prob$-\emph{stationary} if there exists a random variable $\zeta$ with $\tilde \zeta=\bar \zeta$ $\prob$-a.s. For a random variable $\zeta: \Omega \rightarrow \bbR$ we define the \emph{horizontal derivative} $D\zeta$ via
\begin{align*}
D\zeta\big(\om) \; \ldef \; (D_1\zeta(\om), \ldots , D_d\zeta(\om)\big), \qquad D_i\zeta(\om)\; \ldef \; \zeta(\tau_{e_i} \om) -\zeta(\om).
\end{align*}
Its adjoint in $L^2(\Omega)$ denoted by $D^*:L^2(\Omega)^d \rightarrow L^2(\Omega)$ is defined by
\begin{align*}
D^*\zeta(\om)\; \ldef \; \sum_{i=1}^d D_i^* \zeta_i(\om), \qquad D_i^* \zeta_i(\om) \; \ldef \; \zeta_i(\tau_{-e_i} \om) -\zeta_i(\om).
\end{align*}
Note that we have $\overline{(D^*\bomega(0)D u)}(\omega,x)=\nabla^*\omega(x)\nabla \overline u(\omega,x)$.

\medskip
\subsection{Structure of the paper}
In Section~\ref{sec:hk} we establish the estimates on the gradient of the heat kernel and the Green's function needed in the proofs. Then Section~\ref{S:decay} is devoted to the proof of the semigroup decay stated in Theorem~\ref{T:decay} and Section~\ref{sec:corr} to the construction and the moment bounds of the extended corrector. The variance decay for the semigroup applied to the carr\'e du champ operator in Proposition~\ref{P:decay} is shown in Section~\ref{S:P:decay}. Then, the Berry-Esseen estimates in Theorem~\ref{thm:main} are proven in Section~\ref{S:be}. Finally, some relevant examples satisfying our assumptions are discussed in Section~\ref{sec:examples}.

\section{Gradient heat kernel and annealed Green's function estimates} \label{sec:hk}

In this section we establish regularity estimates for averages of the \textit{gradient of the heat-kernel} and the \textit{mixed second gradient of the elliptic Green's function}, which we require for the proofs in Section~\ref{S:decay}. We consider both \textit{annealed} estimates, where the average is taken w.r.t.\ the probability measure $\prob$, and spatially averaged estimates in weighted $\ell^2$-spaces with weight $m^{2\alpha}$ where
\begin{align} \label{def:m_alpha}
  m(t,x)\ldef \bigg( \frac{(|x|+1)^2}{t+1} +1 \bigg)^{\! 1/2},\qquad x\in \bbZ^d,\, t\geq 0.
\end{align}
In the case of uniformly elliptic and bounded conductances the upcoming estimates are well-known. The estimates that we obtain in the degenerate case are weaker in two ways.
\begin{itemize}
\item We only obtain near optimal estimates, in the sense that the decay rate deviates from the optimal decay rate in the uniformly elliptic case by a small parameter $\e>0$.
\item The estimates are random, in the sense that they hold up to a random constant, whose integrability is monitored by an exponent $n\geq 1$.
\item The parameters $\e$ and $n$ can be chosen arbitrarily close $0$ and $\infty$, respectively, provided we impose sufficiently strong moment conditions on the conductances.
\end{itemize}

Throughout this section we assume $d\geq 2$. We start with the following (spatially averaged) decay estimate for the \textit{gradient of the heat-kernel}, which is a key ingredient for the proof of Theorem~\ref{T:decay}. 
\begin{prop}\label{P:HK}
  Suppose that Assumption~\ref{ass:sg} holds. For any $\varepsilon\in(0,1)$, $n\geq 1$ and $\alpha \geq 0$ there exist $p, q \in (1,\infty)$ (only depending on $d,\e,n, \alpha$) such that under the moment condition $M(p,q)<\infty$ the following holds. There exists a family of random variables $(\mathcal Z_t)_{t\geq 0}$ with $\sup_{t\geq 0}\mean[|\mathcal Z_t|^{n}]\leq c$  for some $c=(d,\rho,\e,n,\alpha, M(p,q))$ such that $\prob$-a.s.
  \begin{equation*}
    \Bigg(\sum_{y\in \bbZ^d} m(t,y)^{2\alpha}\, \big|\nabla p(t,y)\big|^2 \Bigg)^{\!\frac{1}{2}} \; \leq \; \mathcal Z_t \, (t+1)^{-(\frac{d}{4}+\frac{1}{2})+\varepsilon}.
  \end{equation*}
\end{prop}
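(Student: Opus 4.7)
The plan is to combine three ingredients: (i) a weighted on-diagonal $\ell^2$-estimate for $p^\om(t,0,\cdot)$ that follows from the Aronson-type off-diagonal heat kernel bounds of \cite{ADS16a,ADS17} (available in the degenerate setting under sufficiently strong moment conditions, cf.\ the cited Lemma~\ref{L:ondiagHK}); (ii) a weighted parabolic Caccioppoli inequality for the heat equation $\partial_s p = -\cL^\om p$ that trades the on-diagonal bound on $p$ for an $\omega$-weighted $\ell^2$-bound on $\nabla p$; and (iii) H\"older's inequality combined with the moment bound on $\om(e)^{-1}$ to convert the $\omega$-weighted gradient norm into the unweighted norm in the statement. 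The degeneracy enters as a small loss $\varepsilon$ already in (i), and this loss propagates through (ii)-(iii); in exchange, one can take $\varepsilon$ arbitrarily small and $n$ arbitrarily large at the price of strengthening the moment conditions on $\om(e)$ and $\om(e)^{-1}$.

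\textbf{Caccioppoli step.} Testing $\partial_s p=-\cL^\om p$ against $m(s,\cdot)^{2\alpha}p(s,\cdot)$, invoking the discrete Leibniz rule $\nabla(fg)(e)=f(\oe)\nabla g(e)+g(\ue)\nabla f(e)$, and applying Young's inequality yields the differential inequality
\begin{equation*}
  \frac{d}{ds}\sum_y m(s,y)^{2\alpha}p(s,y)^2+c\sum_e m(s,\oe)^{2\alpha}\om(e)|\nabla p(s,e)|^2\leq \frac{C(\alpha)}{s}\sum_e m(s,\oe)^{2\alpha-2}\om(e)p(s,\ue)^2,
\end{equation*}
where we used the elementary estimate $|\nabla m^{2\alpha}(s,e)|^2/m(s,\oe)^{2\alpha}\lesssim s^{-1}m(s,\oe)^{2\alpha-2}$ together with the favorable sign $\partial_s m^{2\alpha}\leq 0$. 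Integrating from $t/2$ to $t$, applying the mean-value theorem in time, and bounding the right-hand side by H\"older's inequality together with the weighted on-diagonal estimate of step (i) and the moment bound $\mean[\om(e)^{p_0}]<\infty$, we obtain some $s_\ast\in[t/2,t]$ at which
\begin{equation*}
  \sum_e m(s_\ast,\oe)^{2\alpha}\,\om(e)\,|\nabla p(s_\ast,e)|^2\leq \cZ'_t\,(t+1)^{-\frac{d}{2}-1+2\varepsilon_1},
\end{equation*}
with $\cZ'_t$ uniformly bounded in $L^{n'}(\prob)$ for any prescribed $n'$.

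\textbf{Removing the $\om$-weight and transfer to time $t$.} To convert the $\om$-weighted bound into the unweighted one, we localize to $|y|\leq R\sim\sqrt{t}\log t$ -- the exterior contributes only an exponentially small term through the off-diagonal Gaussian tail of $\nabla p$ obtained by combining (i) and (ii) -- and apply H\"older to get, for any $r>1$ close to $1$,
\begin{equation*}
  \sum_{y\in B(R)}m^{2\alpha}|\nabla_i p|^2\leq \Bigl(\sum_{y\in B(R)}\om(y,y+e_i)^{-r'}\Bigr)^{\!1/r'}\Bigl(\sum_y m^{2\alpha r}\om(y,y+e_i)^{r}|\nabla_i p|^{2r}\Bigr)^{\!1/r}.
\end{equation*}
The first factor is $\lesssim R^d$ in expectation thanks to $\mean[\om(e)^{-q_0}]<\infty$ with $q_0$ large (this is where the quantitative dependence of $p,q$ on $\varepsilon,n,\alpha$ is fixed), and the second is handled by combining the $\om$-weighted gradient bound above with an interpolation with a pointwise bound on $\nabla p$ deduced from step (ii) together with the off-diagonal heat kernel estimate. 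To pass the bound at time $s_\ast\in[t/2,t]$ back to time $t$, use the monotonicity $s\mapsto\sum_e\om|\nabla p(s,e)|^2$ (decreasing) together with $m(t,y)\leq m(s_\ast,y)$ for $s_\ast\leq t$, or equivalently the spectral identity $\mathcal{E}(p(t),p(t))=-\partial_tp(2t,0,0)\lesssim t^{-1}p(t,0,0)$. \emph{The main obstacle} is step (iii): preserving the near-optimal rate $(t+1)^{-(d/4+1/2)+\varepsilon}$ through the $\om$-removal while simultaneously keeping $\cZ_t$ in $L^n(\prob)$ uniformly in $t$ -- this forces a careful choice of $r$, $R$, the localization level, and the moment exponents $p_0,q_0$, each balanced against the others to absorb the random constants produced at every step.
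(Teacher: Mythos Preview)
Your Caccioppoli step and the general strategy are sound and match the paper's Lemma~\ref{L:gradHK:aux1}: testing $\partial_s p=-\cL^\om p$ against $m^{2\alpha}p$ indeed yields, after integration over $[T,2T]$, a \emph{time-averaged} weighted bound
\[
  \frac{1}{T}\int_T^{2T}\sum_{y}m(t,y)^{2\alpha}\big|\nabla p(t,y)\big|^2_\bomega\,dt\;\lesssim\;\cX\,(T+1)^{-\frac d2-1+\e}.
\]
The genuine gap is your passage from this time-averaged bound (or from the bound at the mean-value time $s_\ast\in[t/2,t]$) to a bound valid for \emph{every} $t$. The monotonicity of $s\mapsto\sum_e\om(e)|\nabla p(s,e)|^2$ holds only for the \emph{unweighted} Dirichlet form, and combining it with $m(t,y)\leq m(s_\ast,y)$ does not help because you cannot compare $|\nabla p(t,e)|$ to $|\nabla p(s_\ast,e)|$ pointwise in $e$. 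The spectral identity you mention gives the unweighted bound only. The paper circumvents this via a semigroup argument (Lemma~\ref{L:gradHK}): by Chapman--Kolmogorov and Jensen,
\[
  \big|\nabla p^\omega(t,0,e)\big|^2\;\leq\;\frac{3}{t}\int_{t/3}^{2t/3}\sum_{z\in\bbZ^d}p^\omega(t-s,0,z)\,\big|\nabla p^\omega(s,z,e)\big|^2\,ds,
\]
and after multiplying by $m(t,\ue)^{2\alpha}\omega(e)$, summing over $e$, using the triangle inequality $m(t,\ue)\lesssim m(t-s,z)m(s,\ue-z)$, the pointwise weighted heat-kernel bound, and the shift property $\nabla p^\omega(s,z,e)=\nabla p^{\tau_z\omega}(s,0,e-z)$, one lands on the time-averaged bound above evaluated in the \emph{shifted} environment $\tau_z\omega$. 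This produces a random constant of the form $\sum_z \tilde m(t,z)(\mathcal Y\mathcal X)(\tau_z\omega)^{3/2}(\mathcal Y\mathcal X)(\omega)^{1/2}$ with $\|\tilde m(t,\cdot)\|_{\ell^1}\lesssim 1$, whose moments are controlled by stationarity. This convolution-in-space step is the missing idea.

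Your $\omega$-removal step also has a gap. Putting the high H\"older exponent $r>1$ on $|\nabla_i p|^{2r}$ requires a pointwise bound on $\nabla p$ for interpolation, which is not available in the degenerate setting, and the ``exponential tail'' of $\nabla p$ outside $B(R)$ is likewise unproven. The paper's trick is to go the other way: write
\[
  \sum_e m^{2\alpha}|\nabla p|^2=\sum_e \big(m^{-\theta}\omega^{-1}\big)\cdot\big(m^{2\alpha+\theta}\omega|\nabla p|^2\big),
\]
apply H\"older with exponents $(\tfrac n2,\tfrac{n}{n-2})$, and use the discrete $\ell^{n/(n-2)}\!\leq\ell^1$ estimate so that only the $\ell^1$-norm of the second factor (already controlled by the $\bomega$-weighted bound with a slightly larger weight exponent) is needed. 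The factor $\|m^{-\theta}\omega^{-1}\|_{\ell^{n/2}}^{n/2}=\sum_e m^{-\theta n/2}\omega(e)^{-n/2}$ is then handled by stationarity and the moment condition on $\omega^{-1}$, with $\theta n/2>d$ ensuring summability and the extra $(t+1)^{-\varepsilon n/2}$ absorbing the growth $\sum_e m(t,\ue)^{-\theta n/2}\lesssim(t+1)^{d/2}$. No localization or pointwise gradient bound is required.
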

In the uniformly elliptic case, in particular in the special case of $\nabla^*\bomega\nabla=\nabla^*\nabla$, the estimate holds with $\varepsilon=0$ (which corresponds to the optimal decay in time), and with $\sup_{t\geq 0}\mathcal Z_t$ bounded by a deterministic constant,  see e.g.\ \cite[Theorem~3]{GNO15}. In the present situation the degeneracy of the conductances leads to a loss in the decay. As we will explain in Section~\ref{S:ondiag}, our proof of the estimate relies on an on-diagonal upper heat kernel bound. The result is then obtained by parabolic regularity arguments following \cite{GNO15}.
\medskip

From Proposition~\ref{P:HK} we deduce a couple of annealed estimates. 
\begin{corro}[Suboptimal annealed heat kernel estimate]\label{C:AHK}
  Suppose that Assumption~\ref{ass:sg} is satisfied. For any $\varepsilon\in(0,1)$, $n\geq 1$, and $\alpha\geq 0$, there exist $p, q \in (1,\infty)$ (only depending on $d,\e,n,\alpha$) such that under the moment condition $M(p,q)<\infty$ the following holds. There exists $c=c(d,\rho,\e,\alpha,M(p,q))$ such that for all $x\in\bbZ^d$ and all $t\geq 0$,
  \begin{eqnarray*}
    \mean\Big[\big|\nabla p(t,x,0)\big|^n\Big]^{\frac{1}{n}} &\leq&  c \, (t+1)^{-(\frac{d}{2}+\frac{1}{2})+\varepsilon+\frac{1}{2}\frac{n-1}{n}} \, m(t,x)^{-\alpha},\\
    \mean\Big[\big|\nabla\nabla p(t,x,0)\big|^n\Big]^{\frac{1}{n}} &\leq&  c \, (t+1)^{-(\frac{d}{2}+1)+\varepsilon+\frac{n-1}{n}} \, m(t,x)^{-\alpha}.
  \end{eqnarray*}
\end{corro}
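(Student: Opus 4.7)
My plan is to derive both annealed bounds by Hölder interpolation. The target exponent in the first bound, $-(d/2+1/2)+\varepsilon+\tfrac{1}{2}\tfrac{n-1}{n}$, equals the convex combination $\tfrac{1}{n}(-(d/2+1/2))+\tfrac{n-1}{n}(-d/2)+\varepsilon$, which is exactly the rate produced by $\|f\|_{L^n(\prob)}\leq\|f\|_{L^1(\prob)}^{1/n}\|f\|_{L^\infty(\prob)}^{(n-1)/n}$ applied to an annealed $L^1$-bound at rate $(t+1)^{-(d/2+1/2)+\varepsilon}$ and a pointwise bound at rate $(t+1)^{-d/2+\varepsilon}$. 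The same identity with $(d/2+1/2)$ replaced by $(d/2+1)$ explains the exponent $-(d/2+1)+\varepsilon+\tfrac{n-1}{n}$ in the second bound. Since only finite (but arbitrarily large) moments of the random constants are controlled, the interpolation is actually performed between $L^1$ and $L^N$ for $N$ large, with the small discrepancy absorbed into $\varepsilon$.

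For the pointwise ($L^\infty$-type) endpoint I would invoke Lemma~\ref{L:ondiagHK}, giving $p(t,y)\leq\mathcal{K}_t(t+1)^{-d/2+\varepsilon}$ with $\mathcal{K}_t$ having moments of arbitrary order under sufficient $\omega$-moment conditions. The trivial bound $|\nabla p(t,y)|\leq 2\|p(t,\cdot)\|_{\ell^\infty}$ then yields $\|\nabla p(t,\cdot)\|_{L^N(\prob)}\leq c(t+1)^{-d/2+\varepsilon}$; an analogous bound at the same rate $(t+1)^{-d/2+\varepsilon}$ for $|\nabla\nabla p|$ follows from the semigroup identity $\nabla\nabla p(t,x,0)=\sum_z \nabla_x p(t/2,x,z)\nabla_y p(t/2,z,y)|_{y=0}$ combined with Cauchy-Schwarz in $z$.

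For the annealed $L^1$-endpoint I would exploit the Chapman-Kolmogorov identity combined with Proposition~\ref{P:HK}. Differentiating $p^\omega(t,0,x)=\sum_z p^\omega(t/2,0,z)p^\omega(t/2,z,x)$ in $x$ and applying weighted Cauchy-Schwarz in $z$ together with the conservative property $\sum_z p^\omega(t/2,0,z)=1$ yields the $\omega$-a.s.\ bound
\begin{equation*}
  |\nabla p(t,0,x)|^2\leq\sum_z p^\omega(t/2,0,z)\,|\nabla_x p^\omega(t/2,z,x)|^2.
\end{equation*}
Taking expectations and invoking the shift covariance $p^\omega(t/2,z,\cdot)=p^{\tau_{-z}\omega}(t/2,0,\cdot-z)$ together with stationarity of $\prob$ allows Proposition~\ref{P:HK} to be applied in the shifted environment. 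A Hölder decoupling of the heat-kernel factor $p^\omega(t/2,0,z)$ (bounded via its on-diagonal estimate) from the gradient factor, followed by summation over $z$ weighted by $m(t,z)^{2\alpha'}$ for some $\alpha'>\alpha$, produces the desired $L^1$-rate $\mean[|\nabla p(t,0,x)|]\leq c(t+1)^{-(d/2+1/2)+\varepsilon}m(t,x)^{-\alpha}$. For $\nabla\nabla p$, the same scheme with $\nabla$ applied to \emph{both} factors in the Chapman-Kolmogorov decomposition yields a product of two gradient $\ell^2$-norms (each bounded by Proposition~\ref{P:HK}), producing the $L^1$-rate $(t+1)^{-(d/2+1)+\varepsilon}$. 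The passage from $\nabla p(t,x,0)$ to $\nabla p(t,\cdot)$ (gradient in the first versus the second argument) is handled via reversibility $\mu^\omega(x)p^\omega(t,x,0)=\mu^\omega(0)p^\omega(t,0,x)$ and shift covariance, introducing a moment-bounded factor absorbed via $M(p,q)<\infty$.

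The main obstacle is the derivation of the $L^1$-endpoint at the near-optimal rate. Direct pointwise extraction from Proposition~\ref{P:HK}'s spatial-$\ell^2$ bound yields only $(t+1)^{-(d/4+1/2)+\varepsilon}$, a factor $(t+1)^{d/4}$ away from the required $(t+1)^{-(d/2+1/2)+\varepsilon}$. Recovering this gap via Chapman-Kolmogorov is the delicate step: the identity introduces an environment shift $\omega\mapsto\tau_{-z}\omega$ that couples the two factors, and decoupling it via Hölder in $\omega$ while preserving stationarity and tracking the spatial weight $m(t,\cdot)^\alpha$ (by taking $\alpha'>\alpha$ in Proposition~\ref{P:HK} and summing the resulting Gaussian-type tail in $z$) is the heart of the argument. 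These manipulations dictate the strength of the moment conditions on $\omega(e)$ and $\omega(e)^{-1}$ encoded in $M(p,q)<\infty$.
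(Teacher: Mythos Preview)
Your overall interpolation strategy---combining a near-optimal $L^1$ endpoint with a suboptimal $L^N$ endpoint---is exactly what the paper does. The $L^1$ endpoint via Chapman--Kolmogorov is also the paper's approach. However, there are differences in execution worth noting.

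\textbf{The $L^1$ endpoint is simpler than you make it.} Your formulation via Jensen's inequality with the probability weight $p^\omega(t/2,0,z)$ leaves you with a product $p^\omega(t/2,0,z)\cdot|\nabla_x p^\omega(t/2,z,x)|^2$ under $\mean$ that you then have to ``H\"older-decouple''. The paper avoids this entirely: it applies Cauchy--Schwarz in $z$ with the weight $m^\alpha$ (not with $p$), obtaining a product of two weighted $\ell^2$-norms \emph{before} taking the expectation; then Cauchy--Schwarz in $\prob$ splits the expectation, and stationarity (in the form $\mean[|\nabla p(t,e,y)|^2]=\mean[|\nabla p(t,e-y,0)|^2]$) applies to each factor separately, reducing both to Proposition~\ref{P:HK}. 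The ``main obstacle'' you describe is self-inflicted by the choice of Jensen; the paper's two-fold Cauchy--Schwarz is cleaner and needs no decoupling. Also, for the VSRW considered here the heat kernel is symmetric $p^\omega(t,x,0)=p^\omega(t,0,x)$ with respect to counting measure; your detailed-balance relation $\mu^\omega(x)p^\omega(t,x,0)=\mu^\omega(0)p^\omega(t,0,x)$ is the CSRW identity and is not needed.

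\textbf{The high-$n$ endpoint is genuinely different.} The paper extracts the pointwise bound directly from the weighted $\ell^2$-estimate of Proposition~\ref{P:HK}, which gives the rate $(t+1)^{-(d/4+1/2)+\e}$ for $\mean[|\nabla p|^{n'}]^{1/n'}$ with $n'$ large. Your proposal instead uses the pointwise heat-kernel bound $p(t,y)\lesssim(t+1)^{-d/2+\e}$, giving the sharper rate $(t+1)^{-d/2+\e}$. Note, however, that Lemma~\ref{L:ondiagHK} only gives the $\ell^2$ bound $\sum_y p(t,y)^2\leq\mathcal Y(t+1)^{-d/2}$; to obtain the pointwise estimate you need a short semigroup argument (Cauchy--Schwarz on $p(2t,y)=\sum_z p(t,0,z)p(t,z,y)$ and stationarity of the random constant $\mathcal Y$), which is precisely Step~2 in the proof of Lemma~\ref{L:gradHK}. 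That step also carries the weight $m(t,y)^{-\alpha}$, which you would otherwise lose at the high-$n$ endpoint.
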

The proofs of Corollary~\ref{C:AHK} and the Corollary~\ref{L:AG} below are presented in Section~\ref{S:annealedHK}. Next we establish an annealed estimate on the gradient and the mixed second derivative of the elliptic Green's function, which for $\prob$-a.e.\ $\omega$ and all $x,y\in\bbZ^d$ can be defined by the integral
\begin{equation}\label{HKtoG}
  \nabla G^\omega(x,0):=\int_0^\infty\nabla p^\omega(t,x,0)\,dt,\qquad   \nabla\nabla G^\omega(x,y):=\int_0^\infty\nabla\nabla p^\omega(t,x,y)\,dt.
\end{equation}
\begin{corro}[Suboptimal annealed Green's function estimate]\label{L:AG}
  Suppose that Assumption~\ref{ass:sg} holds and let $\varepsilon\in(0,1)$ and $n\geq 1$. There exist $p, q \in (1,\infty)$ (only depending on $d,\e,n$) such that under the moment condition $M(p,q)<\infty$ we have for all $x\in\bbZ^d$,
  \begin{align*}
    \mean\Big[\big|\nabla G(x,0)\big|^n\Big]^\frac{1}{n} &\leq \;   c\, (|x|+1)^{-(d-1)+\e+\frac{n-1}{n}},\\
    \mean\Big[\big|\nabla\nabla G(x,0)\big|^n\Big]^{\frac{1}{n}} &\leq  \;  c\, (|x|+1)^{-d+\varepsilon+2\frac{n-1}{n}},
  \end{align*}
  with $c=(d,\rho,\e,m,M(p,q))$.
\end{corro}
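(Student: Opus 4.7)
The plan is to obtain Corollary~\ref{L:AG} by integrating the pointwise identity \eqref{HKtoG} in time and controlling the integrand via the annealed heat-kernel bound of Corollary~\ref{C:AHK}. My first step is to apply Minkowski's integral inequality to bring the $L^n(\prob)$-norm inside the time integral, reducing matters to
\begin{equation*}
\mean\big[|\nabla G(x,0)|^n\big]^{1/n} \;\leq\; \int_0^\infty \mean\big[|\nabla p(t,x,0)|^n\big]^{1/n}\,dt \;\leq\; c\int_0^\infty (t+1)^{\gamma}\, m(t,x)^{-\alpha}\,dt,
\end{equation*}
where $\gamma := -\tfrac{d+1}{2} + \varepsilon + \tfrac{1}{2}\tfrac{n-1}{n}$ and $\alpha \geq 0$ is a free weight parameter, to be fixed later together with $(p,q)$ via Corollary~\ref{C:AHK}.

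Next, writing $r := |x|+1$, I would split the time integral at $T := r^2$, exploiting the two regimes of the weight $m(t,x)^2 = r^2/(t+1) + 1$. On $[0,T]$ the lower bound $m(t,x)^{-2} \leq (t+1)/r^2$ converts $m^{-\alpha}$ into $r^{-\alpha}(t+1)^{\alpha/2}$, so the contribution is
\begin{equation*}
r^{-\alpha}\int_0^T (t+1)^{\gamma+\alpha/2}\,dt \;\lesssim\; r^{-\alpha}\, T^{\gamma+\alpha/2+1} \;=\; r^{2\gamma+2},
\end{equation*}
provided $\alpha$ is chosen large enough that $\gamma + \alpha/2 > -1$; the free parameter $\alpha$ conveniently cancels. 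On $[T,\infty)$ the weight $m(t,x)$ is bounded below by a constant, so the integrand is dominated by $(t+1)^\gamma$; integrability requires $\gamma < -1$, i.e.\ $\varepsilon + \tfrac{1}{2}\tfrac{n-1}{n} < \tfrac{d-1}{2}$, which holds for any $d \geq 2$ once $\varepsilon$ is chosen sufficiently small. This tail contributes $T^{\gamma+1} = r^{2\gamma+2}$ as well, and summing the two pieces yields
\begin{equation*}
\mean\big[|\nabla G(x,0)|^n\big]^{1/n} \;\leq\; c\,(|x|+1)^{\,2\gamma+2} \;=\; c\,(|x|+1)^{-(d-1) + 2\varepsilon + \frac{n-1}{n}},
\end{equation*}
which is the first claim after relabeling $2\varepsilon \mapsto \varepsilon$. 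The argument for $\nabla\nabla G$ is entirely parallel: replacing $\gamma$ by $\gamma' := -(d/2+1) + \varepsilon + \tfrac{n-1}{n}$ and carrying out the same splitting gives $(|x|+1)^{2\gamma'+2} = (|x|+1)^{-d+2\varepsilon+2(n-1)/n}$, which is the second claim.

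The only real (and rather mild) obstacle is the interplay of the parameters $(\varepsilon,\alpha,n,p,q)$: $\alpha$ must be taken large enough to extinguish the near-diagonal regime $t \lesssim |x|^2$, while $\varepsilon$ must be taken small enough (depending on $d$ and $n$) to guarantee convergence of the long-time tail. Both choices can be accommodated simultaneously by invoking Corollary~\ref{C:AHK} with suitably large moment exponents $p,q$. Conceptually, the weight $m(t,x)$ is essential here: without it the time integral would fail to concentrate on the diffusive scale $t \sim |x|^2$ that produces the correct $|x|$-decay, and one could not recover the elliptic Green's function estimate from the parabolic input.
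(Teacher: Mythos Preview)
Your proposal is correct and follows essentially the same route as the paper: apply Minkowski's inequality to the identity \eqref{HKtoG}, invoke Corollary~\ref{C:AHK}, and then estimate the resulting time integral $\int_0^\infty (t+1)^{\gamma}m(t,x)^{-\alpha}\,dt$. The only difference is cosmetic: the paper bounds $m(t,x)^{-\alpha}\lesssim (|x|+1)^{-\alpha}(t+1)^{\alpha/2}$ uniformly in $t$ and then chooses $\alpha$ just below $2\theta=-2(\gamma+1)$ to make the integral converge, whereas you split at the diffusive scale $T=(|x|+1)^2$ and observe that $\alpha$ cancels on $[0,T]$ while the tail on $[T,\infty)$ is handled with $m\gtrsim 1$; both computations yield the same exponent $2\gamma+2$.
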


The decay exponent in Corollary~\ref{L:AG} is not optimal. For $n=1$ we miss the optimal decay by $\e$, and for large $n$ by the additional exponent $\frac{n-1}{n}$ and $2\frac{n-1}{n}$, respectively. As shown recently in \cite{MO15} in the uniformly elliptic case and under the assumption that $\mathbb P$ satisfies a logarithmic Sobolev inequality, the estimate holds for any $n\in\bbN$ with the optimal decay exponent $d-1$ and $d$, respectively. The argument in \cite{MO15} lifts the estimate for $n=1$ to higher moments by using the logarithmic Sobolev inequality and a \textit{deterministic} regularity estimate, which is not available in our degenerate setting. Our (simple, yet suboptimal) argument is as follows. For $n=1$ the estimate follows from Proposition~\ref{P:HK} exploiting stationarity. For $n\gg 1$ we obtain it by interpolating with a suboptimal estimate on high moments of $\nabla p$, that follows from Proposition~\ref{P:HK} as well.

\begin{remark}\label{R:whydgeq4}
  The non-optimality of the above estimate is the limiting factor that hinders us to improve the decay exponent in our main result, Theorem~\ref{thm:main}, in dimension $d=3$.  This becomes visible in the sublinear growth estimate for $\sigma$, see \eqref{eq:sublinsigma}. In the proof of this estimate we exploit that thanks to Corollary~\ref{L:AG} we have
  \begin{equation*}
   \sum_{x\in\bbZ^d}\bigg(\mean\Big[\big|\nabla\nabla G(x,0)\big|^n\Big]^{\frac{1}{n}} \bigg)^{\! s} \; < \; \infty
  \end{equation*}
for  exponents $0<n-2\ll1$ and $s>\frac{d}{d-1}$. With the optimal estimate for $\nabla\nabla G$ at hand, the above estimate would hold for any $s>1$, and we would obtain \eqref{eq:sublinsigma} with $\theta=0$ for any $d\geq 3$. Eventually, this would improve the decay rate in Theorem~\ref{thm:main} for $d=3$ to $\frac{1}{5}-\e$.
\end{remark}

\subsection{Gradient heat kernel estimate: Proof of Proposition~\ref{P:HK}}

In this section we prove Proposition~\ref{P:HK}. The starting point of the argument is an on-diagonal upper heat kernel bound.

\begin{lemma}[On-diagonal heat kernel estimate]\label{L:ondiagHK}
  Suppose that Assumption~\ref{ass:sg} holds. For any $n\in\bbN$ there exist $p, q \in (1,\infty)$ (only depending on $d,n$) such that under the moment condition $M(p,q)<\infty$ the following holds.  There exists a random variable $\mathcal Y\geq 1$ with $\mean\big[|\mathcal Y|^n \big]\leq c$ for some $c=c(d,\rho,n,M(p,q))$ such that $\prob$-a.s.\ for all $t\geq 0$,
  \begin{equation}\label{odHK}
    \sum_{y\in\bbZ^d}p(t,y)^2 \; \leq \; \mathcal Y \, (t+1)^{-\frac d 2}.
  \end{equation}
  
\end{lemma}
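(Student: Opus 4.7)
The plan is to reduce the $\ell^2$-mass bound \eqref{odHK} to a pointwise on-diagonal heat kernel estimate and then invoke the quenched heat kernel bounds of \cite{ADS16a, ADS17}. Since the generator $\cL^\om$ is symmetric with respect to the counting measure, the transition kernel satisfies $p^\om(t,x,y)=p^\om(t,y,x)$, and by the Chapman-Kolmogorov identity
\begin{equation*}
\sum_{y\in\bbZ^d}p(t,y)^2 \;=\; \sum_{y\in\bbZ^d}p^\om(t,0,y)\,p^\om(t,y,0) \;=\; p^\om(2t,0,0).
\end{equation*}
It therefore suffices to establish a pointwise estimate of the form $p^\om(2t,0,0)\leq \mathcal Y(t+1)^{-d/2}$ with a random prefactor $\mathcal Y\in L^n(\prob)$; the claim then follows after the harmless rescaling $2t\mapsto t$.

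For the pointwise estimate, I would appeal to the quenched off-diagonal upper heat kernel bounds obtained in \cite{ADS16a, ADS17}: provided $M(p,q)<\infty$ with $p,q$ large enough (depending on $d$), there exists a random variable $\mathcal N=\mathcal N(\om)\geq 1$, finite $\prob$-almost surely, and a deterministic constant $C=C(d,p,q)$ such that
\begin{equation*}
p^\om(t,0,0) \;\leq\; C\, t^{-d/2} \qquad \text{for all } t\geq \mathcal N.
\end{equation*}
For the short-time range $t<\mathcal N$ I would use the trivial probabilistic bound $p^\om(t,0,0)\leq 1$, combining both ranges to obtain the unified estimate $p^\om(t,0,0)\leq C(1+\mathcal N)^{d/2}(1+t)^{-d/2}$. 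Setting $\mathcal Y \ldef C(1+\mathcal N)^{d/2}\vee 1$ yields the required pointwise bound.

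It then remains to control $\mean[\mathcal Y^n]$, which boils down to $\mean[\mathcal N^{nd/2}]<\infty$. The random threshold $\mathcal N$ in \cite{ADS16a, ADS17} arises from a Moser iteration in which the number of iterations needed to reach scale-invariant smoothing is governed by local spatial averages of $\om$ and of $\om^{-1}$. Hence polynomial moments of $\mathcal N$ can be obtained from quantitative ergodic statements for these averages together with sufficient moments of $\om,\om^{-1}$. Under Assumption~\ref{ass:sg} the spectral gap yields the requisite quantitative ergodicity, and by selecting the exponents $p$ and $q$ in the moment condition $M(p,q)<\infty$ large enough (depending on $n$ and $d$), arbitrary polynomial moments of $\mathcal N$, and in particular $\mean[\mathcal N^{nd/2}]<\infty$, are available.

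The main technical obstacle I anticipate is precisely this book-keeping: tracking how the target moment exponent $n$ propagates through the iteration scheme of \cite{ADS16a, ADS17} and determines the required thresholds on $p$ and $q$. The spectral gap assumption enters here as the quantitative input that upgrades almost-sure finiteness of $\mathcal N$ to an $L^{nd/2}$ bound, and this is where the interplay between the ergodicity parameter $\rho$ and the moment parameters $p,q$ must be made explicit.
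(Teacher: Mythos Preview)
Your proposal is correct and follows essentially the same route as the paper: reduce to the on-diagonal bound via $\sum_y p(t,y)^2=p^\om(2t,0,0)$, invoke the quenched on-diagonal estimate from \cite{ADS16a,ADS17} valid for $t$ larger than a random threshold, and absorb the short-time regime into the random prefactor $\mathcal Y$.

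One small clarification on the ``book-keeping'' you anticipate: you do not need to re-enter the Moser iteration of \cite{ADS16a,ADS17} to control moments of the threshold. The paper uses the output of those works in the form that $p^\om(t,0,0)\lesssim t^{-d/2}$ holds for all $t\geq \mathcal R^2$, where $\mathcal R$ is the smallest integer radius $R$ such that the spatial averages $\frac{1}{\#B(R)}\sum_{x\in B(R)}\mu^\om(x)^p$ and $\frac{1}{\#B(R)}\sum_{x\in B(R)}\nu^\om(x)^q$ are within a factor $2$ of their means. Moments of $\mathcal R$ are then bounded directly by applying the $p$-version of the spectral gap \eqref{eq:sg_p} to the centred block average $f_R=\frac{1}{\#B(R)}\sum_{x\in B(R)}(\mu^\om(x)^p/\mean[\mu^p]-1)$, which yields $\mean[|f_R|^{2k}]\lesssim R^{-dk}$ and hence $\prob[\mathcal R=R]\lesssim R^{-dk}$ for any $k$; summing gives $\mean[\mathcal R^n]<\infty$ once $M(p',q')<\infty$ for $p',q'$ large enough. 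This is exactly the quantitative ergodicity step you outline, just carried out at the level of the explicit threshold $\mathcal R$ rather than inside the iteration.
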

For the proof see Section~\ref{S:ondiag} below. Next, we introduce the stationary weights
  \begin{align} \label{eq:def_mu}
  \mu^\omega(x):=\sum_{y\in\bbZ^d}\omega(x,y),\qquad \nu^\omega(x):=\sum_{y\in\bbZ^d}\frac{1}{\omega(x,y)},
  \end{align}
  and write $\mu\ldef\mu^\om(0)$ and $\nu\ldef \nu^\om(0)$ for abbreviation.
 Henceforth the random varable $\mathcal{Y}$ in \eqref{odHK} is fixed as  defined in the proof of Lemma~\ref{L:ondiagHK} below.  In addition to $\mathcal Y$, further random variables will appear in the estimates below. In this subsection, to keep the presentation lean, we say $\mathcal X$ denotes a random variable of class $\mathcal X(c_1,c_2,\ldots,c_n)$ (in short $\mathcal X=\mathcal X(c_1,c_2,\ldots,c_n)$), if it can be written in the form
  \begin{equation*}
    \mathcal X(\omega)\; \ldef \; c_0\bigg( \sum_{x\in\bbZ^d} \big(|x|+1\big)^{-(d+1)} \, \big(\mu^\omega(x)+1\big)^{p_1}\bigg)^{\! p_2},
  \end{equation*}
  with exponents $p_1,p_2\geq 1$ and a constant $c_0\geq 1$ that can
  be chosen only depending on the parameters
  $c_1,\ldots,c_n$. Evidently, the class  is stable under taking products, sums and powers of such random variables. Moreover, finite moments of
  such a random variable are bounded, provided $\mean[\mu^p]<\infty$ for $p$ sufficiently large.  
\smallskip

Following \cite{GNO15}, we lift the on-diagonal estimate of Lemma~\ref{L:ondiagHK} to a weighted $\ell^2$-estimate for $p$ and $\nabla p$.

  \begin{lemma}\label{L:gradHK:aux1}
    Suppose that \eqref{odHK} holds. Let $\alpha\geq \frac{d}{2}+1$ and $\e>0$. Then there exists a random variable $\mathcal X=\mathcal X(d,\alpha,\e)$ such that for all $t\geq 0$ and $T\geq 1$,
    \begin{eqnarray}\label{eq:weight1}
      \sum_{y\in\bbZ^d} \big(|y|+1\big)^{2\alpha} \, p(t,y)^2&\leq&\mathcal Y\mathcal X(t+1)^{-\frac d2+\e+\alpha},\\
      \label{eq:weight2}
      \frac{1}{T}\int_T^{2T}\sum_{y\in\bbZ^d} \big(|y|+1\big)^{2\alpha} \, \big|\nabla p(t,y)\big|^2_\bomega\,dt&\leq&\mathcal Y\mathcal X(T+1)^{-\frac d2-1+\e+\alpha}.
    \end{eqnarray}
  \end{lemma}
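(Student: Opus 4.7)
Let $w(y)\ldef(|y|+1)^{2\alpha}$ and set $F(t)\ldef \sum_y w(y)\,p(t,y)^2$. The plan is to prove \eqref{eq:weight1} by an $\ell^\infty$-times-moments decomposition, and to derive \eqref{eq:weight2} from a weighted energy identity for $F$ combined with \eqref{eq:weight1}.

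For \eqref{eq:weight1}, I would first upgrade the on-diagonal $\ell^2$ bound \eqref{odHK} to a pointwise bound of the form $p(2t,0,y)\leq \mathcal{Y}(t+1)^{-d/2}$ (times a controllable factor involving $\mu^\om$) via the semigroup identity $p(2t,0,y)=\sum_z p(t,0,z)p(t,z,y)$, Cauchy--Schwarz, reversibility $\mu^\om(z)p(t,y,z)=\mu^\om(y)p(t,z,y)$, and stationarity (which allows the use of \eqref{odHK} also when starting from $y$, at the price of an additional random factor $\mathcal Y(\tau_y\om)^{1/2}$). I would then split
\begin{equation*}
F(t) \;\leq\; \|p(t,\cdot)\|_\infty\cdot\Mean_0^\om\big[w(X_t)\big],
\end{equation*}
and control the walk moment $\Mean_0^\om[(|X_t|+1)^{2\alpha}]\lesssim \mathcal{X}\,(t+1)^{\alpha+\varepsilon}$ by integrating against the quenched off-diagonal heat-kernel upper bounds of \cite{ADS16a,ADS17}; the $(t+1)^\varepsilon$-slack arises when the unbounded random prefactors in those bounds (powers of $\mu^\om(y)$, summed against $(|y|+1)^{-(d+1)}$) are absorbed into a random constant of the class $\mathcal{X}(d,\alpha,\varepsilon)$ via H\"older, which is finite in high moments provided $M(p,q)<\infty$ for $p,q$ large enough.

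For \eqref{eq:weight2}, I would differentiate $F$ along $\partial_t p=\nabla^*(\om\nabla p)$ and integrate by parts to obtain
\begin{equation*}
F'(t) \;=\; -2\sum_{e\in E_d}\om(e)\,\nabla(wp)(e)\,\nabla p(e).
\end{equation*}
The discrete Leibniz rule $\nabla(wp)(e)=w(\oe)\nabla p(e)+p(\ue)\nabla w(e)$ together with a weighted Young inequality, combined with $|\nabla w(e)|^2/w(\oe)\lesssim (|\oe|+1)^{2\alpha-2}$ and $\sum_{e\ni y}\om(e)=\mu^\om(y)$, yields
\begin{equation*}
F'(t)\;\leq\;-\sum_e\om(e)w(\oe)|\nabla p(e)|^2\;+\;C_\alpha\sum_y(|y|+1)^{2\alpha-2}\,\mu^\om(y)\,p(t,y)^2.
\end{equation*}
Integrating between $T$ and $2T$ and rearranging reduces \eqref{eq:weight2} to bounding $F(T)$ and the time average of the right-hand error term; the former is controlled by \eqref{eq:weight1} applied with exponent $\alpha$, and the latter by a H\"older decomposition isolating $\mu^\om(y)$ against the summable weight $(|y|+1)^{-(d+1)}$ and applying \eqref{eq:weight1} with a slightly enlarged exponent (still admissible after updating $p,q$).

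The chief technical obstacle is the unbounded random factor $\mu^\om(y)$ in the error term: unlike in the uniformly elliptic case, it cannot be discarded by a deterministic bound. Extracting it as a random prefactor of the prescribed class $\mathcal{X}(d,\alpha,\varepsilon)$, whose moments are controlled by sufficiently strong $M(p,q)<\infty$, is possible, but only at the cost of the arbitrarily small algebraic loss $\varepsilon$ in the time decay. This loss is precisely the mechanism by which the near-optimal (as opposed to optimal) decay rates in \eqref{eq:weight1} and \eqref{eq:weight2} arise, and it then propagates to all downstream estimates, notably Proposition~\ref{P:HK}.
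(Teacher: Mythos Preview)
Your treatment of \eqref{eq:weight2} is correct and coincides with the paper's: the weighted energy identity
\[
F'(t)\;\leq\;-\sum_{y}m_0(y)^{2\alpha}\,|\nabla p(t,y)|_\bomega^2\;+\;c_\alpha\sum_y\mu^\omega(y)\,m_0(y)^{2\alpha-2}\,p(t,y)^2,
\qquad m_0(y):=|y|+1,
\]
integrated over $[T,2T]$, reduces \eqref{eq:weight2} to \eqref{eq:weight1} plus a H\"older bound on the $\mu$-weighted error. (A minor slip: for the VSRW the reversible measure is the counting measure, so $p(t,y,z)=p(t,z,y)$ directly; the detailed-balance relation you wrote with $\mu^\omega$ is the CSRW one.)

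Your route to \eqref{eq:weight1}, however, has a genuine gap. The decomposition $F(t)\leq\|p(t,\cdot)\|_\infty\cdot\Mean_0^\omega[w(X_t)]$ requires a bound on $\sup_y p(t,0,y)$, but Cauchy--Schwarz and \eqref{odHK} only give the \emph{pointwise} bound $p(2t,0,y)\leq\mathcal Y(\omega)^{1/2}\,\mathcal Y(\tau_y\omega)^{1/2}\,(t+1)^{-d/2}$. The shifted factor $\mathcal Y(\tau_y\omega)$ cannot be made uniform in $y$ and is not of the prescribed class $\mathcal X(d,\alpha,\e)$, which by definition involves only powers of $\mu^\omega(x)$ against the summable weight $(|x|+1)^{-(d+1)}$. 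Similarly, the moment bound $\Mean_0^\omega[(|X_t|+1)^{2\alpha}]\lesssim\mathcal X\,(t+1)^{\alpha+\e}$ would have to be extracted from the off-diagonal estimates of \cite{ADS16a,ADS17}, whose random constants are again outside this class and whose validity regimes depend on random thresholds; this is substantially heavier input than the lemma is meant to require.

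The paper's argument avoids both issues by using \emph{the same energy identity you already derived}: drop the negative gradient term in the display above and estimate the error term
\[
I(t)=c_\alpha\sum_y\mu^\omega(y)\,m_0(y)^{2\alpha-2}\,p(t,y)^2
\]
by H\"older, writing $2\alpha-2=2\alpha(1-\tfrac1\theta)-2(1-\tfrac\alpha\theta)$ with $\theta=\alpha+\e$, so that $\mu^\omega(y)$ pairs with $m_0(y)^{-(d+1)}$ into a constant of class $\mathcal X(d,\alpha,\e)$ and the remaining factor interpolates between $F(t)$ and the unweighted $\ell^2$-mass $\sum_y p(t,y)^2\leq\mathcal Y(t+1)^{-d/2}$. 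This yields the differential inequality
\[
\tfrac{d}{dt}F(t)\;\leq\;\mathcal X\,\mathcal Y^{1/\theta}\,F(t)^{(\theta-1)/\theta}\,(t+1)^{-\frac{d}{2\theta}},
\]
which integrates (here the hypothesis $\alpha\geq\frac d2+1$, which you never invoke, is exactly what makes $\frac{d}{2\theta}<1$) to give \eqref{eq:weight1}. The $\e$-loss thus enters not from importing off-diagonal heat-kernel bounds, but purely from the H\"older splitting of $\mu^\omega$ against the summable weight.
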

  \begin{remark}
    In the case of conductances that are bounded from above, we may choose $\e=0$ and thus recover the optimal scaling in $t$. 
  \end{remark}
  Based on Lemma~\ref{L:gradHK:aux1} we establish the following variant of Proposition~\ref{P:HK}.
  \begin{lemma} \label{L:gradHK}
    Suppose that \eqref{odHK} holds. Let $\alpha\geq 0$ and $0<\e<1$. Then there exists a random variable $\mathcal X=\mathcal X(d,\alpha,\e)$ such that $\prob$-a.s.\ for all $t\geq 0$,
    \begin{align*}
      \sum_{y\in\bbZ^d}  m(t,y)^{2\alpha}\, \big|\nabla p(t,y)\big|_\bomega^2\;\leq \; \mathcal X_t\, (t+1)^{-(\frac{d}{2}+1) +2\e},
    \end{align*}
    where
    \begin{equation*}
      \mathcal X_t(\omega)\, \ldef \; \sum_{z\in\bbZ^d}\frac{m(t,z)^{-(d+1)}}{(t+1)^{\frac d2}} \, \big|(\mathcal Y\mathcal X)(\tau_z\omega)\big|^\frac32 \, \big|(\mathcal Y\mathcal X)(\omega)\big|^\frac12.
    \end{equation*}
    In particular, for every $n\in \bbN$,  $\sup_{t\geq 0}\mean[|\mathcal X_t|^{n}]\leq c$  for some $c=(d,\rho,\e,n,M(p,q))$.
  \end{lemma}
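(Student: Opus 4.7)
The strategy is to upgrade the time-averaged gradient bound \eqref{eq:weight2} to a pointwise-in-time bound via the semigroup property of the heat kernel combined with a two-fold Cauchy--Schwarz argument. The starting point is the representation
\begin{equation*}
\nabla p(t,y) \;=\; \sum_{z\in\bbZ^d} p(s,z)\,\nabla p^{\tau_z\omega}(t-s,y-z),
\end{equation*}
valid for any $s\in(0,t)$, which follows from reversibility of the VSRW together with the shift-covariance $p^\omega(s,x,y)=p^{\tau_x\omega}(s,0,y-x)$. Since the left-hand side does not depend on $s$, we average over $s\in[t/4,t/2]$ so that $t-s$ stays inside $[t/2,3t/4]\subset[t/2,t]$---a range accessible via \eqref{eq:weight2} applied to $\tau_z\omega$. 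Applying Jensen in the time average and then Cauchy--Schwarz in $z$ with the weight $\lambda(z):=m(t,z)^{d+1}$ gives
\begin{equation*}
|\nabla p(t,y)|_\bomega^2 \;\leq\; \frac{C}{t}\int_{t/4}^{t/2} A(s)\,B(s,y)\,ds,
\end{equation*}
where $A(s):=\sum_z m(t,z)^{d+1}p(s,z)^2$ and $B(s,y):=\sum_z m(t,z)^{-(d+1)}\,|\nabla p^{\tau_z\omega}(t-s,y-z)|_{\tau_z\bomega}^2$; here we used the shift-covariance of the conductances to replace $|\cdot|_\bomega$ by $|\cdot|_{\tau_z\bomega}$ after the substitution $y'=y-z$.

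Multiplying by $m(t,y)^{2\alpha}$, summing in $y$, and applying the elementary splitting $m(t,y'+z)^{2\alpha}\lesssim m(t,y')^{2\alpha}+m(t,z)^{2\alpha}$ together with the fact that $m(t,\cdot)$ and $m(r,\cdot)$ are comparable for $r\in[t/2,t]$, the expression decouples into a supremum of $A(s)$ and a $z$-summation of time-integrated weighted gradient norms of $p^{\tau_z\omega}$. The weight $\lambda(z)=m(t,z)^{d+1}$ is chosen at the borderline of summability: $\sum_z m(t,z)^{-(d+1)}$ grows exactly like $(t+1)^{d/2}$, which explains the $(t+1)^{-d/2}$ prefactor in the definition of $\mathcal X_t$. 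With this choice, \eqref{eq:weight1} (applied with an exponent slightly exceeding $(d+1)/2$) yields $A(s)\lesssim\mathcal Y\mathcal X(\omega)\,(t+1)^{-d/2+\e}$ uniformly for $s\in[t/4,t/2]$, while \eqref{eq:weight2} applied to $\tau_z\omega$ bounds the inner time-integral by $C\,t\,\mathcal Y\mathcal X(\tau_z\omega)\,(t+1)^{-d/2-1+\e}$. Multiplying and collecting powers of $(t+1)$ produces the announced decay $(t+1)^{-(d/2+1)+2\e}$.

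The asymmetric exponents $\tfrac32$ and $\tfrac12$ in the definition of $\mathcal X_t$ arise from a further redistribution of the product $\mathcal Y\mathcal X(\omega)\cdot\mathcal Y\mathcal X(\tau_z\omega)$ via a Young-type step: in one of the Cauchy--Schwarz factors the pointwise on-diagonal bound $p(s,z)\leq\mathcal Y^{1/2}(\omega)\,(s+1)^{-d/4}$ implicit in \eqref{odHK} is used in place of the $\ell^2$-type bound, thereby extracting a half-power of $\mathcal Y(\omega)$ and leaving the remaining $\mathcal Y\mathcal X$-factor at the shifted environment $\tau_z\omega$ with total exponent $\tfrac32$. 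The moment bound $\sup_{t\geq 0}\mean[|\mathcal X_t|^n]\leq c$ then follows from Minkowski's inequality, stationarity of $\prob$, and H\"older's inequality: the total $L^n$-moment of $\mathcal X_t$ reduces, up to the uniformly bounded factor $\sum_z m(t,z)^{-(d+1)}/(t+1)^{d/2}$, to $\mean[|\mathcal Y\mathcal X|^{2n}]$, which is finite under sufficiently strong moment conditions on $\omega$ by the construction of the class $\mathcal X(\cdot)$. The \textbf{main obstacle} is the precise bookkeeping needed to make the $(t+1)$-powers combine to exactly $-(d/2+1)+2\e$ while keeping the $z$-summation borderline convergent; this forces both the specific weight $\lambda(z)=m(t,z)^{d+1}$ and the $\tfrac32$--$\tfrac12$ redistribution of $\mathcal Y\mathcal X$-powers between $\tau_z\omega$ and $\omega$.
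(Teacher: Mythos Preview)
Your overall strategy---semigroup representation, time-average, then weighted estimates---matches the paper's, but two concrete steps are not right as written, and they are exactly the places where the precise form of $\mathcal X_t$ comes from.

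\medskip

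\textbf{The weight splitting.} You propose the \emph{additive} inequality $m(t,y'+z)^{2\alpha}\lesssim m(t,y')^{2\alpha}+m(t,z)^{2\alpha}$. The second summand produces, after the $z$-summation, a factor $\sum_z m(t,z)^{2\alpha-(d+1)}\mathcal Y\mathcal X(\tau_z\omega)$, whose moments are \emph{not} uniformly bounded in $t$ once $\alpha\geq \tfrac12$. The paper uses the \emph{multiplicative} triangle inequality $m(t,\ue)^{2\alpha}\lesssim m(t-s,z)^{2\alpha}\,m(s,\ue-z)^{2\alpha}$, so that the $m(t-s,z)^{2\alpha}$ factor is absorbed into the heat kernel $p(t-s,0,z)$ rather than left in the $z$-sum.

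\medskip

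\textbf{Jensen versus Cauchy--Schwarz, and the origin of the $\tfrac12,\tfrac32$.} The paper does \emph{not} apply Cauchy--Schwarz in $z$. Since $p(t-s,0,\cdot)$ is a probability measure, Jensen's inequality gives directly
\[
|\nabla p(t,0,e)|^2\;\leq\;\frac{3}{t}\int_{t/3}^{2t/3}\sum_{z}p(t-s,0,z)\,|\nabla p(s,z,e)|^2\,ds,
\]
with a single power of $p(t-s,0,z)$ appearing. The key intermediate step (Step~2 in the paper) is then a \emph{pointwise off-diagonal} bound
\[
m(t,z)^\beta\,p^\omega(t,0,z)\;\leq\;\sqrt{(\mathcal Y\mathcal X)(\omega)\,(\mathcal Y\mathcal X)(\tau_z\omega)}\,(t+1)^{-\frac d2+\e},
\]
obtained by writing $p(2t,0,z)=\sum_y p(t,0,y)\,p(t,y,z)$, applying Cauchy--Schwarz in $y$, and invoking \eqref{eq:gradHK:pf001} for both factors (one at $\omega$, one at $\tau_z\omega$ via symmetry and shift). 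This is the actual source of the half-powers. Your claimed pointwise bound $p(s,z)\leq \mathcal Y^{1/2}(\omega)(s+1)^{-d/4}$ is \emph{not} implied by \eqref{odHK}: that estimate only controls $\|p(s,\cdot)\|_{\ell^2}$, not $p(s,z)$ for $z\neq 0$. Once Step~2 is in place, one applies it with exponent $\beta=2\alpha+d+1$ so as to produce the weight $m(t,z)^{-(d+1)}$, and then \eqref{eq:gradHK:pf002} at the shifted environment contributes a full factor $(\mathcal Y\mathcal X)(\tau_z\omega)$ from the gradient. The product $(\mathcal Y\mathcal X)(\omega)^{1/2}(\mathcal Y\mathcal X)(\tau_z\omega)^{1/2}\cdot(\mathcal Y\mathcal X)(\tau_z\omega)$ gives exactly the $\tfrac12,\tfrac32$ split.

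\medskip

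If you insist on Cauchy--Schwarz in $z$, you can make it work by choosing the weight $\lambda(z)=m(t,z)^{d+1+2\alpha}$ (not $m(t,z)^{d+1}$) and using the multiplicative splitting; this yields a valid estimate, but with symmetric exponents $1,1$ on $(\mathcal Y\mathcal X)(\omega)$ and $(\mathcal Y\mathcal X)(\tau_z\omega)$ rather than the stated $\tfrac12,\tfrac32$. That would still suffice for Proposition~\ref{P:HK}, but it is not the statement as formulated.
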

The proofs of Lemma~\ref{L:gradHK:aux1} and Lemma~\ref{L:gradHK} are postponed to Sections~\ref{sec:gradHK:aux1} and \ref{sec:grad:HK}, respectively.  Proposition~\ref{P:HK} easily follows from Lemma~\ref{L:gradHK} and Lemma~\ref{L:ondiagHK} as can be seen by the following short argument.
\begin{proof}[Proof of Proposition~\ref{P:HK}]
 It suffices to consider $n\geq\frac{d}{\varepsilon}$. Let $\theta> \frac{2d}{n}$ and set
  \begin{equation*}
    f_t(e)\; \ldef \; (t+1)^{(\frac{d}{2}+1)-2\varepsilon} \, \Big(m(t,\ue)^{2{\alpha+\theta}} \, \big|\nabla p(t,e)\big|^2 \, \omega(e)\Big).
  \end{equation*}
  By Lemma~\ref{L:gradHK} we have
  \begin{equation}\label{P:HK:eq:S001}
    \|f_t\|_{\ell^1}\; \leq \; \mathcal X_t\qquad\text{and}\qquad \sup_{t\geq 0}\mean\Big[\mathcal X_t^\frac{nq}{2q-n}\Big]\; <\; \infty,
  \end{equation}
 provided $M(p,q)<\infty$ for $p$ and $q$ sufficiently large. Next we consider
  \begin{align} \label{eq:defZt}
    {\mathcal Z}_t(\omega) \; \ldef \; (t+1)^{(\frac{d}{4}+\frac12)-\varepsilon} \, \Big(\sum_{y\in\bbZ^d}m(t,y)^{2\alpha} \big|\nabla p(t,y)\big|^2\Big)^\frac12.
  \end{align}
  %
  Note that ${\mathcal Z}^2_t=(t+1)^{-\varepsilon}\sum_{e\in E_d}g(e)f_t(e)$ where $g(e):=m(t,\ue)^{-\theta} \, \omega(e)^{-1}$. Hence,  H\"older's inequality with exponent $(\frac{n}{2},\frac{n}{n-2})$ and the discrete estimate $\|f_t\|_{\ell^{\frac{n}{n-2}}}\leq\|f_t\|_{\ell^1}$ yield ${\mathcal Z}^2_t \leq(t+1)^{-\varepsilon}\|g\|_{\ell^{\frac{n}{2}}}\|f_t\|_{\ell^1}$. We take the $n/2$-th moment and apply H\"older's inequality (w.r.t.\ $\prob$) with  exponents $(\frac{2q}{n},\frac{2q}{2q-n})$ and the shift-invariance of $\prob$ to obtain
  \begin{eqnarray*}
    \mean\big[{\mathcal Z}_t^n\big]
    &\leq& (t+1)^{-\varepsilon \frac n 2}\mean\Big[\big(\|g\|_{\ell^{\frac{n}{2}}}\|f_t\|_{\ell^1}\big)^\frac{n}{2}\Big] \; = \; 
    (t+1)^{-\varepsilon \frac n 2}\sum_{e\in E_d}m(t,\ue)^{-\theta \frac n 2} \, \mean\Big[\omega(e)^{-\frac{n}{2}}\|f_t\|_{\ell^1}^\frac{n}{2}\Big]\\
    &\leq& \bigg((t+1)^{-\varepsilon \frac n 2}\sum_{e\in E_d}m(t,\ue)^{-\theta \frac n 2}\bigg) \, \mean\Big[ \big|\bomega(0)\big|^{-q}\Big]^\frac{n}{2q} \, \mean\Big[\|f_t\|_{\ell^1}^\frac{nq}{2q-n}\Big]^{\frac{2q-n}{2q}}.
  \end{eqnarray*}
  Note that $\sup_{t\geq 0}(t+1)^{-\varepsilon \frac n 2}\sum_{e\in E_d} m(t,\ue)^{-\theta \frac n 2}<\infty$ since $\theta n/2>d$ and $\varepsilon n/2>\frac{d}{2}$. Furthermore, $\mean\big[|\bomega(0)|^{-q}\big]<\infty$ by the moment condition. Combined with \eqref{P:HK:eq:S001} we finally deduce that $\sup_{t\geq 0}\mean\big[{\mathcal Z}_t^n\big]<\infty$, which completes the proof.
\end{proof}
 
\subsubsection{On-diagonal heat kernel estimate: Proof of Lemma~\ref{L:ondiagHK}} \label{S:ondiag}

The statement is a rather direct consequence of an on-diagonal estimate (see Lemma~\ref{L:odHKaux1} below), which can be obtained from \cite{ADS17}, and an application of the spectral gap estimate of Assumption~\ref{ass:sg} used to control moments of the estimate's random constant, see Lemma~\ref{L:odHKaux2}. 
 Assuming $M(p,q)<\infty$ for any $p,q\in(1,\infty)$, we denote by $\mathcal{R}=\mathcal R(\omega,p,q)\geq 1$ the smallest integer such that for all $R\geq \mathcal{R}$,
  \begin{equation}\label{lem:on-diag:eq0001}
    \frac{1}{\# B(R)}\sum_{x\in B(R)}\mu^\omega(x)^p\leq 2\mean[\mu^p]<\infty \qquad\text{and}\qquad    \frac{1}{\# B(R)}\sum_{x\in B(R)}\nu^\omega(x)^q\leq 2\mean[\nu^q]<\infty,
  \end{equation}
 with $\mu^\om$ and $\nu^\om$ defined in \eqref{eq:def_mu}.  Then, $\prob$-a.s., $\mathcal R<\infty$ by the ergodic theorem.

\begin{lemma}\label{L:odHKaux1}
  Let $p,q\in(1,\infty)$ satisfy $\frac{1}{p}+\frac{1}{q}<\frac{2}{d}$. Suppose that $\prob$ is stationary and ergodic, and that the moment condition $M(p,q)<\infty$ is satisfied.
 Then, there exists $c=c(d,p,q,M(p,q))$ such that for $t\geq \mathcal R^2$,
  \begin{equation*}
    p(t,0) \; \leq \; c \, (t+1)^{-\frac{d}{2}}.
  \end{equation*}
\end{lemma}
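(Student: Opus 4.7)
The plan is to invoke the on-diagonal upper heat-kernel estimate of Andres, Deuschel and Slowik \cite{ADS17}. Under the condition $\tfrac{1}{p}+\tfrac{1}{q}<\tfrac{2}{d}$, that paper establishes, for $\prob$-a.e.\ $\omega$, a bound of the form $p^\omega(t,0,0)\leq c\,t^{-d/2}$ for all $t\geq\mathcal{N}(\omega)^2$, where $c$ depends only on $d,p,q,\mean[\mu^p]$ and $\mean[\nu^q]$, and where $\mathcal{N}(\omega)$ is the smallest scale above which the empirical averages of $\mu^\omega(\cdot)^p$ and $\nu^\omega(\cdot)^q$ on balls $B(R)$ centred at $0$ are controlled by $2\mean[\mu^p]$ and $2\mean[\nu^q]$, respectively. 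Since by \eqref{lem:on-diag:eq0001} this is precisely the defining property of $\mathcal{R}$, one has $\mathcal{N}(\omega)\leq\mathcal{R}(\omega,p,q)$, and the lemma follows.

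The strategy in \cite{ADS17} proceeds along classical Nash--Moser lines. The crucial ingredient is a \emph{weighted} Sobolev (or Faber--Krahn) inequality on balls $B(R)$ with $R\geq\mathcal{R}$, obtained by interpolating the standard Sobolev inequality for the counting measure on $\bbZ^d$ against the moment bounds on $\mu^\omega$ and $1/\omega$ via H\"older's inequality. Combined with the identity $\tfrac{d}{dt}\|p(t,0,\cdot)\|_{\ell^2(\bbZ^d)}^2=-2\mathcal{E}^\omega(p(t,0,\cdot),p(t,0,\cdot))$, where $\mathcal{E}^\omega(f,f)=\tfrac12\sum_{e\in E_d}\omega(e)\,(\nabla f(e))^2$, and with the $\ell^1$-normalisation $\sum_y p(t,0,y)=1$, this produces the Nash-type differential inequality
\begin{equation*}
  \frac{d}{dt}\|p(t,0,\cdot)\|_{\ell^2(\bbZ^d)}^2 \; \leq \; -C\,\|p(t,0,\cdot)\|_{\ell^2(\bbZ^d)}^{2+4/d}, \qquad t\geq\mathcal{R}^2.
\end{equation*}
Integrating from $\mathcal{R}^2$ to $t$ yields $\|p(t,0,\cdot)\|_{\ell^2(\bbZ^d)}^2\leq C t^{-d/2}$ for $t\geq\mathcal{R}^2$, and the on-diagonal bound then follows from $p(t,0,0)\leq\|p(t/2,0,\cdot)\|_{\ell^2(\bbZ^d)}^2$, an immediate consequence of the semigroup property together with the reversibility of the VSRW with respect to the counting measure.

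The main obstacle is establishing the weighted Sobolev inequality with constants depending only on $\mean[\mu^p]$ and $\mean[\nu^q]$ (rather than on the realisation) once one is above the scale $\mathcal{R}$: this is exactly where the moment condition $\tfrac{1}{p}+\tfrac{1}{q}<\tfrac{2}{d}$ enters, as the H\"older interpolation against the classical Sobolev exponent $2d/(d-2)$ closes up precisely under this constraint. Once this inequality is available, the remainder of the argument is routine and mirrors the Nash iteration in the uniformly elliptic case. Accordingly, the proof effectively reduces to citing the relevant on-diagonal bound of \cite{ADS17} and identifying the random scale $\mathcal{N}(\omega)$ used there with (a constant multiple of) our $\mathcal{R}$ defined via \eqref{lem:on-diag:eq0001}.
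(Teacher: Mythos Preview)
Your proposal is correct and takes essentially the same approach as the paper: both argue that the on-diagonal bound follows directly from \cite[Theorem~2.5]{ADS17}, after identifying the random scale appearing there with $\mathcal{R}$ as defined via \eqref{lem:on-diag:eq0001}. The paper's proof is slightly terser (it simply cites the result and notes that the assumptions are satisfied), while you additionally sketch the Nash--Moser strategy underlying \cite{ADS17}; the paper also mentions the parabolic Harnack inequality of \cite{ADS16} as an alternative route.
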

\begin{proof}
This on-diagonal bound follows immediately from the upper heat kernel bounds in \cite[Theorem~2.5]{ADS17}, which is based on arguments in \cite{ADS16a}. Indeed,  by our assumptions $\mathcal{R}=\mathcal{R}(\om,p,q)$ defined via \eqref{lem:on-diag:eq0001} is $\prob$-a.s.\ finite for $p,q\in(1,\infty)$  with $\frac{1}{p}+\frac{1}{q}<\frac{2}{d}$. Therefore the assumptions of \cite[Theorem~2.5]{ADS17} are satisfied for $\prob$-a.e.\ $\om$.
Alternatively, the estimate can be deduced from the parabolic Harnack inequality established in \cite{ADS16}, see Proposition~4.7 and Remark~1.5 therein.
\end{proof}

\begin{lemma}\label{L:odHKaux2}
  Suppose that Assumption~\ref{ass:sg} holds and for any $p,q\in(1,\infty)$ let $\mathcal{R}$ be defined via \eqref{lem:on-diag:eq0001}.Then, for any $n\in\bbN$ there exist $p',q'\in(1,\infty)$ (only depending on $p,q,n$) such that under the moment condition $M(p',q')<\infty$ we have $\mean\big[|\mathcal R|^n\big]\leq c$ with $c=c(d,\rho,p,q,n,M(p',q'))$.
\end{lemma}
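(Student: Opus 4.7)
My plan is to apply the $2k$-th moment version of the spectral gap inequality \eqref{eq:sg_p} to the two empirical averages that define $\mathcal R$, and then convert the resulting $L^{2k}$-fluctuation estimate into a polynomial tail bound for $\mathcal R$ via Chebyshev. Write
\[
u_R^\mu(\om) \; \ldef \; \frac{1}{\# B(R)} \sum_{x \in B(R)} \mu^\om(x)^p, \qquad u_R^\nu(\om) \; \ldef \; \frac{1}{\# B(R)} \sum_{x \in B(R)} \nu^\om(x)^q,
\]
so that by stationarity $\mean[u_R^\mu]=\mean[\mu^p]$ and $\mean[u_R^\nu]=\mean[\nu^q]$. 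The event $\{\mathcal R>R\}$ is contained in the union over $R'\geq R$ of the events on which either $u_{R'}^\mu$ or $u_{R'}^\nu$ exceeds twice its mean, so a union bound together with the layer-cake identity $\mean[\mathcal R^n]\lesssim 1+\sum_R R^{n-1}\prob[\mathcal R>R]$ reduces the lemma to proving a polynomial tail estimate $\prob[|u_R^\mu-\mean[\mu^p]|>\tfrac12\mean[\mu^p]]\lesssim R^{-dk}$ (and the analogue for $\nu$) with $k$ as large as desired; then $\mean[\mathcal R^n]<\infty$ as soon as $dk>n+1$.

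The crucial computation is the estimate of the vertical derivatives. Direct differentiation yields
\[
\partial_e \mu^\om(x)^p = p\,\mu^\om(x)^{p-1}\, \indicator_{\{x\in e\}}, \qquad \partial_e \nu^\om(x)^q = -q\,\nu^\om(x)^{q-1}\,\om(e)^{-2}\, \indicator_{\{x\in e\}},
\]
and so, after a Cauchy-Schwarz on the two endpoints and using that each vertex is incident to $2d$ edges,
\[
\sum_{e\in E_d}\big(\partial_e u_R^\mu\big)^2 \;\leq\; \frac{C(d,p)}{\# B(R)}\cdot\frac{1}{\# B(R)}\sum_{x\in B(R)}\mu^\om(x)^{2(p-1)}.
\]
For the $\nu$-average, the factor $\om(e)^{-2}$ is absorbed through the crude pointwise bound $\om(e)^{-1}\leq \nu^\om(x)$ valid whenever $x\in e$, giving
\[
\sum_{e\in E_d}\big(\partial_e u_R^\nu\big)^2 \;\leq\; \frac{C(d,q)}{\# B(R)}\cdot\frac{1}{\# B(R)}\sum_{x\in B(R)}\nu^\om(x)^{2q+2}.
\]
Inserting these into \eqref{eq:sg_p} with exponent $k$ and applying Jensen's inequality to the renormalised sum inside the expectation, followed by stationarity of $\prob$, yields the $L^{2k}$-bounds
\[
\mean\!\big[(u_R^\mu-\mean[\mu^p])^{2k}\big] \;\leq\; \frac{C}{\#B(R)^{k}}\,\mean[\mu^{2k(p-1)}], \qquad \mean\!\big[(u_R^\nu-\mean[\nu^q])^{2k}\big] \;\leq\; \frac{C}{\#B(R)^{k}}\,\mean[\nu^{(2q+2)k}].
\]

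Chebyshev then produces exactly the tail $R^{-dk}$ required in Step~1, provided the moments $\mean[\mu^{2k(p-1)}]$ and $\mean[\nu^{(2q+2)k}]$ are finite; since $\mean[\mu^{p'}]$ is controlled by $\mean[\om(e)^{p'}]$ and $\mean[\nu^{q'}]$ by $\mean[\om(e)^{-q'}]$, this amounts to requiring $M(p',q')<\infty$ with $p'\geq 2k(p-1)$ and $q'\geq(2q+2)k$. Choosing $k>(n+1)/d$ finishes the proof. The only genuine nuisance is the extra factor $\om(e)^{-2}$ appearing in the $\nu$-derivative; it cannot be discarded, but absorbing it via $\om(e)^{-1}\leq \nu^\om(x)$ is harmless at the price of slightly larger negative moments of $\om(e)$, and explains why the exponents $p',q'$ we need depend (linearly) on $n$ and on the original $p,q$.
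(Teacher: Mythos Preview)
Your proof is correct and follows essentially the same approach as the paper: apply the $2k$-version of the spectral gap \eqref{eq:sg_p} to the centered empirical averages, compute the vertical derivatives, convert the resulting $L^{2k}$-bound into a polynomial tail via Chebyshev, and sum. The paper shortcuts the bookkeeping by using $\prob[\mathcal R=R]\leq\prob[\text{failure at }R-1]$ instead of your union bound over $R'\geq R$, and it leaves the $\nu$-case to the reader, whereas you spell out correctly how the extra $\omega(e)^{-2}$ in $\partial_e\nu^q$ is absorbed via $\omega(e)^{-1}\leq\nu^\omega(x)$; both lead to the same threshold $dk>n+1$.
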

\begin{proof}
 We only present the argument for $\mu^\omega$, since the argument for $\nu^\omega$ is the same. To that end consider the random variable
  \begin{equation*}
    f_R=f_R^\omega:=    \frac{1}{\# B(R)}\sum_{x\in B(R)}\Big(\frac{\mu^\omega(x)^p}{\mean[\mu^p]}-1\Big).
  \end{equation*}
  which is well-defined since $\mean[\mu^p]>0$ by assumption. We claim that for any $k\in\bbN$,
  \begin{equation}\label{lem:on-diag:eq0002}
    \mean \big[|f_R|^{2k}\big] \; \lesssim \; R^{-dk} \, \mean\big[\mu^{2(p-1)k}\big].
  \end{equation}
 Indeed, since $\mean[f_R]=0$, the spectral gap inequality in form of \eqref{eq:sg_p} yields 
  \begin{equation*}
    \mean\big[|f_R|^{2k}\big]\; \lesssim \; \mean\bigg[\Big(\sum_{e\in E_d}|\partial_ef_R|^2\Big)^k \bigg].
  \end{equation*}
  Since $\partial_e\mu(x)=\indicator_{\{\ue,\oe\}}(x)$, we deduce that
  \begin{equation*}
    \partial_e f_R \; \leq \;
    \begin{cases}
      \frac{p}{\# B(R)} \big(\mu^{p-1}(\ue)+\mu^{p-1}(\oe)\big)&\text{if }\ue\in B(R)\text{ or }\oe\in B_R,\\
      0&\text{else.}
    \end{cases}
  \end{equation*}
  Now, the combination of the previous two estimates gives \eqref{lem:on-diag:eq0002}.
  \smallskip

  Next, by a slight abuse of notation let $\mathcal R(\omega)\geq 1$ be the smallest integer such that
  \begin{equation*}
    \sup_{R\geq\mathcal R(\omega)}|f_R|\leq 1.
  \end{equation*}
  Since $\mu$ is stationary and $\mean[f_R]=0$, Birkhoff's ergodic theorem shows that $\prob$-a.s.\ $\sup_{r\geq R}f_r\to 0$  as $R\uparrow\infty$.  In particular, $\mathcal R<\infty$ $\prob$-a.s.\ and $\mathcal R$ satisfies the first property in \eqref{lem:on-diag:eq0001}. We finally estimate the moments of $\mathcal R$ by using \eqref{lem:on-diag:eq0002}. To that end, note that for all $R\in\bbN$ with $R\geq 2$,
  \begin{equation*}
    \prob\big[\mathcal R=R\big] \; \leq \; \prob\big[|f_{R-1}|>1\big] \; \stackrel{\eqref{lem:on-diag:eq0002}}{\lesssim} \; R^{-dk} \,\mean\big[\mu^{2(p-1)k}\big].
  \end{equation*}
  Hence,
  \begin{eqnarray*}
    \mean\big[\mathcal R^n \big]\; = \; \sum_{R\in\bbN}R^n \, \prob\big[\mathcal R=R\big] \, \lesssim \, \sum_{R\in\bbN}R^{n-dk}\mean\big[\mu^{2(p-1)k}\big].
  \end{eqnarray*}
  We choose $k>(n+1)/d$ and $p'=2(p-1)k$ to get the claim.
\end{proof}

Lemma~\ref{L:ondiagHK} is now a simple consequence of the previous two results.
\begin{proof}[Proof of Lemma~\ref{L:ondiagHK}]
  Since $p(2t,0) =  \sum_{y\in \bbZ^d} p(t,y)^2 $ by the symmetry of the kernel, and the fact that $\sum_{y\in\bbZ^d}p(t,y)=1$, we deduce from Lemma~\ref{L:odHKaux1} that
  \begin{equation*}
    \sum_{y\in\bbZ^d}p(t,y)^2 \; \leq \; \mathcal Y \, (t+1)^{-\frac{d}{2}},\qquad \mathcal Y\; := \; (c+1) \, (\mathcal R+1)^d.
  \end{equation*}
  By Lemma~\ref{L:odHKaux2} we can achieve $\mean[\mathcal Y^n]<\infty$ if sufficiently high moments of $\omega$ and $\omega^{-1}$ exist.
\end{proof}

\subsubsection{Proof of Lemma~\ref{L:gradHK:aux1}} \label{sec:gradHK:aux1} 
 \step 1 First we prove \eqref{eq:weight1}. 
For abbreviation we set $m_0(x)\ldef 1+|x|$, $x\in \bbZ^d$. Recall that $ \partial_t p = -\nabla^*(\bomega\nabla p)$. Hence, by \eqref{eq:adjoint} and the discrete product rule in \eqref{eq:discr_prod} we get
  \begin{align*} 
    &\frac{1}{2}\frac{d}{dt}\sum_{y\in\bbZ^d} m_0(y)^{2\alpha} \, p(t,y)^2 =  \sum_{y\in\bbZ^d} m_0(y)^{2\alpha} \, p(t,y) \, \partial_t  p(t,y) \nonumber \\
    &\qquad =\ - \sum_{e\in E_d}\omega(e)\, \nabla\big( m_0^{2\alpha} p(t,\cdot) \big)(e) \,\nabla p(t,e) \nonumber \\
    &\qquad \leq\ - \sum_{e\in E_d}  \omega(e) \, m_0(\ue)^{2\alpha} \, \big|\nabla p(t,e) \big|^2 
    + \omega(e) \, p(t,\oe) \, \big|\nabla m_0^{2\alpha}(e)\big| \, \big|\nabla p(t,e)\big| \nonumber.
  \end{align*}
  Since $\big|\nabla (m_0(e)^{2\alpha})\big|\leq \sqrt{c_\alpha} \, m_0(\oe)^{\alpha-1} \, m_0(\ue)^{\alpha}$, Young's inequality yields
  \begin{align*}
    & \sum_{e\in E_d} \omega(e)\, p(t,\oe)\, \big|\nabla m_0^{2\alpha}(e)\big| \, \big|\nabla p(t,e)\big| \\
    \leq &
    \; \sqrt{c_\alpha}\sum_{e\in E_d}\sqrt{\omega(e)}\, m_0(\oe)^{\alpha-1}\, p(t,\oe) \cdot \sqrt{\omega(e)}\, m_0(\ue)^\alpha\, \big|\nabla p(t,e)\big| \\
    \leq &
    \; \frac{c_\alpha}{2}\sum_{y\in \bbZ^d}  \mu(y) \, m_0(y)^{2\alpha-2} \, p(t,y)^2
    +\frac{1}{2}\sum_{y\in\bbZ^d} m_0(y)^{2\alpha} \, \big|\nabla p(t,y) \big|^2_{\bomega}
  \end{align*}
  with $|\cdot |_{\bomega}$ as defined in \eqref{eq:def_normom}. We conclude that
  \begin{equation}\label{eq:st0001}
    \frac{d}{dt}\sum_{y\in\bbZ^d} m_0(y)^{2\alpha} \, p(t,y)^2 \; \leq \; \underbrace{c_\alpha \sum_{y\in \bbZ^d} \mu(y) \, m_0(y)^{2\alpha-2} \, p(t,y)^2}_{=:I(t)}
    -\sum_{y\in\bbZ^d} m_0(y)^{2\alpha} \, \big|\nabla p(t,y) \big|^2_{\bomega}.
  \end{equation}
  To estimate $I(t)$ we set $\theta:=\alpha+\e$ and write $2\alpha-2=2\alpha(1-\frac{1}{\theta})-2(1-\frac{\alpha}{\theta})$. Then by H\"older's inequality with exponent $\frac{d+1}{2(1-\frac{\alpha}{\theta})}>1$ and the discrete $\ell^{q}-\ell^1$-estimate (for $q\geq 1$)
  we get
  \begin{equation*}
    I(t)\; \leq \; \mathcal X \, \sum_{y\in\bbZ^d}m_0(y)^{2\alpha(1-\frac{1}{\theta})} \, p(t,y)^2,\qquad \mathcal X\; \ldef \; c_\alpha \, \bigg(\sum_{y\in\bbZ^d}\mu(y)^{\frac{\theta}{\theta-\alpha}\frac{d+1}{2}}m_0(y)^{-(d+1)}\bigg)^{\frac{\theta-\alpha}{\theta}\frac{2}{d+1}}.
  \end{equation*}
 In combination with 
  \begin{eqnarray*}
    \sum_{y\in\bbZ^d}m_0(y)^{2\alpha(1-\frac{1}{\theta})} \, p(t,y)^2  \; \leq \; \bigg(\sum_{y\in\bbZ^d}m_0(y)^{2\alpha} \, p(t,y)^2\bigg)^{\! \frac{\theta-1}{\theta}} \, \bigg(\sum_{y\in\bbZ^d}p(t,y)^2\bigg)^{\!\frac{1}{\theta}},
  \end{eqnarray*}
  Lemma~\ref{L:ondiagHK} and \eqref{eq:st0001} (where we drop the non-positive second term on the right-hand side), this implies
  \begin{equation*}
    \frac{d}{dt}f(t)\; \leq \; \mathcal X \, \mathcal Y^{\frac 1\theta} \, f(t)^{\frac{\theta-1}{\theta}} \, (t+1)^{-\frac{d}{2}\frac 1\theta},\qquad f(t):=\sum_{y\in\bbZ^d}   m_0(y)^{2\alpha} \, p(t,y)^2.
  \end{equation*}
  Hence,
  \begin{eqnarray*}
    \frac{d}{dt}\big(f(t)^{\frac{1}{\theta}}\big) \; \leq \;\frac{1}{\theta}\, f(t)^{\frac{1}{\theta}-1} \, \frac{d}{dt} f(t)\;\leq \;
    \frac{1}{\theta} \, \mathcal X \, \mathcal Y^\frac{1}{\theta}\, (t+1)^{-\frac{d}{2}\frac 1\theta}.
  \end{eqnarray*}
  Since $\frac{d}{2}\frac{1}{\theta}<1$ as $\alpha>\frac{d}{2}$, an integration in $t$ and the fact that $f(0)=1$ yields \eqref{eq:weight1}.
  \medskip

  \step 2  Next we show \eqref{eq:weight2}.  
  The starting point of the argument is \eqref{eq:st0001}, which we recall in an integrated and rearranged form
  \begin{equation*}
    \frac{1}{T}\int_{T}^{2T}\sum_{y\in\bbZ^d}m_0(y)^{2\alpha} \, \big|\nabla p(t,y)\big|^2_\bomega \; \leq \; \frac{1}{T}\sum_{y\in\bbZ^d}m_0(y)^{2\alpha}p(T,y)^2+\frac{1}{T}\int_T^{2T}I(t)\,dt.
  \end{equation*}
  By Step~1 the first term on the right-hand side is bounded by $\mathcal Y\mathcal X(T+1)^{-\frac{d}{2}-1+\e+\alpha}$. 
  Furthermore, by H\"older's inequality and Step~1 we have for all $t\in(T,2T)$,
  \begin{eqnarray*}
    I(t) \; \leq \; \mathcal X\sum_{y\in\bbZ^d}m_0(y)^{2(\alpha-1)+\e} \, p(t,y)^2
    \; \leq \; \mathcal Y \, \mathcal X \, (T+1)^{-\frac{d}{2}-1+\e+\alpha}.
  \end{eqnarray*}
  The combination of the previous estimates yields \eqref{eq:weight2}.
\qed

\subsubsection{Proof of Lemma~\ref{L:gradHK}} \label{sec:grad:HK}
Throughout the proof, $\mathcal X$ denotes a generic random variable (that might change from line to line) of class $\mathcal X(d,\alpha,\e)$. Moreover, we write $\lesssim$ if $\leq$ holds up to a constant only depending on $d,\alpha,\e$. 

\medskip
\step 1 First we show that there exists $\mathcal X=\mathcal X(d,\alpha,\e)$ such that for all $t\geq 0$ and $T\geq 1$, 
\begin{eqnarray}\label{eq:gradHK:pf001}
  \sum_{y\in\bbZ^d} m(t,y)^{2\alpha} \, p(t,y)^2&\leq& \mathcal Y \,\mathcal X \, (t+1)^{-\frac{d}{2}+\e},\\
  \label{eq:gradHK:pf002}
  \frac{1}{T}\int_T^{2T}\sum_{y\in\bbZ^d} m(t,y)^{2\alpha} \, \big|\nabla p(t,y)\big|^2_\bomega\,dt&\leq& \mathcal Y \, \mathcal X \, (T+1)^{-\frac{d}{2}-1+\e}.
\end{eqnarray}
We start with \eqref{eq:gradHK:pf001}. First assume that $\alpha\geq \alpha_0:=\frac{d}{2}+1$. Since $m(t,y)^{2\alpha} \lesssim\left(\frac{(|y|+1)^{2\alpha}}{(t+1)^{\alpha}}+ 1\right)$,
\begin{equation*}
  \sum_{y\in\bbZ^d}  m(t,y)^{2\alpha} \, p(t,y)^2  \; \lesssim \; \bigg(\big(t+1\big)^{-\alpha}\sum_{y\in\bbZ^d} \big(|y|+1\big)^{2\alpha} \, p(t,y)^2  +\sum_{y\in\bbZ^d} p(t,y)^2\bigg),
\end{equation*}
which combined with Lemma~\ref{L:gradHK:aux1} and Lemma~\ref{L:ondiagHK} yields the desired estimate. In the case $0\leq\alpha\leq\alpha_0$ we proceed by interpolating the estimate for $\alpha=0$ and $\alpha=\alpha_0$. Indeed, H\"older's inequality yields
\begin{align*}
  \sum_{y\in\bbZ^d} m(t,y)^{2\alpha} \, p(t,y)^2  \; \leq \; \bigg(\sum_{y\in\bbZ^d} m(t,y)^{2\alpha_0} \, p(t,y)^2 \Bigg)^{\!\frac{\alpha}{\alpha_0}}\Bigg(\sum_{y\in\bbZ^d}p(t,y)^2\bigg)^{\!\frac{\alpha_0-\alpha}{\alpha_0}}.
\end{align*}
The first term on the right-hand side can be estimated as above, and the second term on the right-hand side is estimated by Lemma~\ref{L:ondiagHK}.
\smallskip

 Next, we prove \eqref{eq:gradHK:pf002}. First note that
 \begin{eqnarray}\label{eq:ondiag_grad:1a}
  \frac{1}{T}\int_T^{2T} \sum_{y\in\bbZ^d} \big|\nabla p(t,y)\big|^2_{\bomega} \, dt
  \; \leq \;\mathcal Y \, \mathcal{X}\, (T+1)^{-\frac d 2-1}.
\end{eqnarray}
Indeed, by integrating the identity  $\frac{1}{2} \frac d {dt} \sum_{y\in \bbZ^d} p(t,y)^2 = - \sum_{y\in\bbZ^d} |\nabla p(t,y)|^2_\bomega$  w.r.t.\ $t$, we get $\frac{1}{T}\int_T^{2T} \sum_{y\in\bbZ^d} \big|\nabla p(t,y)\big|^2_{\bomega} \, dt \leq
\frac{1}{2T} \sum_{y\in \bbZ^d} p(T,y)^2$, which in combination with Lemma~\ref{L:ondiagHK} yields \eqref{eq:ondiag_grad:1a}. Now, we argue as above to obtain \eqref{eq:gradHK:pf002} for $\alpha\geq\alpha_0$ by using Lemma~\ref{L:gradHK:aux1}. Finally, the estimate for $0\leq \alpha\leq\alpha_0$ follows (as in the proof of \eqref{eq:gradHK:pf001}) by interpolation.
\medskip

\step 2 In this step we show that for any $\alpha\geq 0$ and $0<\e<1$ there exists $\mathcal X=\mathcal X(d,\alpha,\e)$ such that 
\begin{equation*}
  m(t,z)^\alpha p^\omega(t,0,z) \; \leq \; \sqrt{(\mathcal Y\mathcal X)(\omega) \,(\mathcal Y\mathcal X)(\tau_z\omega)} \, (t+1)^{-\frac{d}{2}+\e},
\end{equation*}
for all $z\in\bbZ^d$ and $t\geq 0$.
 By the triangle inequality for the weight in form of $m(2t,z)^\alpha\lesssim m(t,z-y)^\alpha \, m(t,y)^\alpha$, the semigroup property, and the shift property $p^{\tau_z\om}(t,0,y-z)=p^\om(t,y,z)$,
\begin{eqnarray*}
   m(2t,z)^\alpha p^\om(2t,0,z)
   & \lesssim   &  \sum_{y\in \bbZ^d} m(t,y)^\alpha\, p^\om(t,0,y)\, m(t,z-y)^\alpha \, p^\om(t,y,z)\\
   &\lesssim & \Bigg(\sum_{y\in \bbZ^d} m(t,y)^{2\alpha} \, p^\om(t,0,y)^2\Bigg)^{\! \frac 1 2} \, \Bigg(\sum_{y\in\bbZ^d} m(t,z-y)^{2\alpha} \, p^\om(t,y,z)^2\Bigg)^{\! \frac 1 2}\\
   &=& \Bigg(\sum_{y\in \bbZ^d} m(t,y)^{2\alpha} \, p^\om(t,0,y)^2\Bigg)^{\! \frac 1 2} \, \Bigg(\sum_{y\in\bbZ^d} m(t,z-y)^{2\alpha} \, p^{\tau_z\om}(t,y-z,0)^2\Bigg)^{\! \frac 1 2}.
\end{eqnarray*}
Using symmetry in form of $p^{\tau_z\omega}(t,y-z,0)=p^{\tau_z\omega}(t,0,y-z)$ and applying \eqref{eq:gradHK:pf001} yields
\begin{eqnarray*}
   m(2t,z)^\alpha p^\om(2t,0,z)
   & \leq & \sqrt{(\mathcal Y\mathcal X)(\omega)(\mathcal Y\mathcal X)(\tau_z\omega)}(t+1)^{-\frac{d}{2}+\e}.
\end{eqnarray*}
%

\step 3 Now we show the statement. First note that it suffices to prove the claimed estimate for $t\geq 1$, since for $0\leq t\leq 1$ the estimate follows from \eqref{eq:gradHK:pf001} and the fact that $|\nabla p(t,e)|\leq p(t,\ue)+p(t,\oe)$.
For $t\geq 1$, by the semigroup property and Jensen's inequality we have for all $e\in E_d$,
\begin{equation*}
  \big|\nabla p^\omega(t,0,e)\big|^2 \;\leq \; \frac{3}{t}\int_{\frac{t}3}^{\frac 2 3 t}\sum_{z\in \bbZ^d} p^\om(t-s,0,z) \, \big|\nabla p^\om(s,z,e)\big|^2\,ds,
\end{equation*}
and by the triangle inequality for the weight in form of $m(s,\ue)^{2\alpha}\lesssim m(t-s,z)^{2\alpha}\, m(s,\ue-z)^{2\alpha}$,
\begin{align*}
  I\; \ldef \; &\sum_{e\in E_d}m(t,\ue)^{2\alpha} \, \big|\nabla p^\omega(t,0,e)\big|^2 \, \omega(e)\\
  \lesssim \; & \frac{3}{t}\int_{\frac{t}3}^{\frac 2 3 t}\sum_{z\in \bbZ^d} \sum_{e\in E_d}m(t,\ue)^{2\alpha} \, p^\om(t-s,0,z) \, \big|\nabla p^\omega(t,z,e)\big|^2 \, \omega(e) \,ds\\
     \;\lesssim  \; &\frac{3}{t}\int_{\frac{t}3}^{\frac23 t }\sum_{z\in \bbZ^d} m(t-s,z)^{2\alpha} \, p^\omega(t-s,0,z)\, \Bigg(\sum_{e\in E_d} m(s,\ue-z)^{2\alpha} \, \big|\nabla p^\om(s,z,e)\big|^2 \,  \omega(e) \Bigg)\,ds.
\end{align*}
Now, note that  Step~2 implies for all $s\in(\frac{t}{3},\frac{2t}{3})$, 
  \begin{align*}
  & m(t-s,z)^{2\alpha} \, p^\omega(t-s,0,z)
   \;= \; 
   m(t-s,z)^{2\alpha+d+1} \, p^\omega(t-s,0,z) \, m(t-s,z)^{-(d+1)}\\
    & \mspace{36mu} \leq \;\sqrt{(\mathcal Y\mathcal X)(\om) \, (\mathcal Y\mathcal X)(\tau_z\om)} \, \big(t-s+1\big)^{-\frac{d}{2}+\e} \, m(t-s,z)^{-(d+1)}\\
    &\mspace{36mu} \leq \; \sqrt{(\mathcal Y\mathcal X)(\om) \, (\mathcal Y\mathcal X)(\tau_z\om)} \, \big(t+1\big)^{\e} \, \tilde m(t,z),\qquad\tilde m(t,z):=\frac{m(t,z)^{-(d+1)}}{(t+1)^\frac d2},
  \end{align*}
  where we used in the last step that $\frac{t}{3}\leq t-s\leq \frac{2t}{3}$.
Further,  the shift property $\nabla p^{\tau_z\om}(s,0,e-z)=\nabla p^\om(s,z,e)$ gives that
  \begin{equation*}
    \sum_{e\in E_d} m(s,\ue-z)^{2\alpha} \, \big|\nabla p^\om(s,z,e)\big|^2 \,  \omega(e) \; = \; \sum_{y\in\bbZ^d} m(s,y)^{2\alpha} \, \big|\nabla p^{\tau_z\om}(s,0,y)\big|^2_{\tau_z\bomega}.
  \end{equation*}

We conclude that
\begin{align*}
  I  \; \lesssim  \; 
  & \big(t+1\big)^\e \sum_{z\in \bbZ^d}\tilde m(t,z)
          \sqrt{(\mathcal X \mathcal Y)(\tau_z\omega)\, (\mathcal X \mathcal Y)(\omega)} \, \Bigg(\frac{3}{t}\int_{\frac{t}3}^{\frac23 t }\sum_{y\in\bbZ^d} m(s,y)^{2\alpha} \, \big|\nabla p^{\tau_z\om}(s,0,y)\big|^2_{\tau_z\bomega}\,ds \Bigg)\\
   \;\stackrel{\eqref{eq:gradHK:pf002}}{\leq} \; & (t+1)^{-\frac{d}{2}-1+2\e} \, \mathcal X_t,\qquad\mathcal X_t:=\sum_{z\in \bbZ^d}\tilde m(t,z) \,
  \big((\mathcal Y\mathcal X)(\tau_z\omega)\big)^{\frac32} \, \big((\mathcal Y\mathcal X)(\omega)\big)^\frac12.
\end{align*}
Finally, $\sup_{t\geq 1}\mean\big[|\mathcal{X}_t|^n\big]<\infty$ for every $n\in \bbN$ since 
$\|\tilde m(t,x)\|_{\ell^1}\lesssim 1$ uniformly in $t$  and arbitrarily high moments of $\mathcal Y$ and $\mathcal X$ exist if $M(p,q)<\infty$ holds for $p$ and $q$ sufficiently large. 
\qed

\subsection{Annealed  estimates: Proofs of Corollary~\ref{C:AHK} and Lemma~\ref{L:AG}} \label{S:annealedHK}

\begin{proof}[Proof of Corollary~\ref{C:AHK}]
 \step 1 Estimates for $\nabla p$ and $\nabla\nabla p$ for $n=1$.
\smallskip

Since the argument for $\nabla p$ and $\nabla\nabla p$ are similar, we only discuss the estimate for $\nabla\nabla p$, which follows the  discussion below \cite[Proposition~1]{MO15}. For the reader's convenience we sketch the short argument. Since $p(t,x,x')=\sum_{y\in\bbZ^d}p(\frac{t}{2},x,y) \, p(\frac{t}{2},y,x')$ by the semigroup property, for any $e,e'\in E_d$ we have
  \begin{equation*}
    \nabla\nabla p(t,e,e')=\sum_{y\in\bbZ^d}\nabla p(\tfrac{t}{2},e,y) \, \nabla p(\tfrac{t}{2},y,e').
  \end{equation*}
  We multiply this identity with $m(t,\ue-\ue')^\alpha$ and obtain by the triangle inequality for the weight, i.e.\ $m(t,\ue-\ue')^\alpha\leq 2^\alpha m(t,\ue-y)^\alpha m(t,\ue'-y)^\alpha$, and the Cauchy Schwarz inequality in $\sum_{y\in\bbZ^d}$,
  \begin{eqnarray*}
    &&m(t,\ue-\ue')^\alpha \, \big|\nabla\nabla p(t,e,e')\big|\\
    &\leq& 2^\alpha \, \bigg(\sum_{y\in\bbZ^d} m(t,\ue-y)^{2\alpha}  \, \big|\nabla p(\tfrac{t}{2},e,y)\big|^2\bigg)^{\! \frac{1}{2}}\bigg(\sum_{y\in\bbZ^d} m(t,\ue'-y)^{2\alpha}  \, \big|\nabla p(\tfrac{t}{2},e',y)\big|^2\bigg)^{\! \frac{1}{2}}.
  \end{eqnarray*}
  We take the expectation, apply Cauchy Schwarz w.r.t. $\prob$ and exploit stationarity and symmetry in form of $\mean\big[|\nabla p(t,e,y)|^2\big]=\mean\big[|\nabla p(t,e-y,0)|^2\big]$ to obtain
  \begin{align*}
    & m(t,\ue-\ue')^\alpha\, \mean\Big[\big|\nabla\nabla p(t,e,e')\big|\Big] \leq 2^\alpha\sum_{y\in\bbZ^d}\mean\Big[\big|\nabla p(\tfrac{t}{2},e-y,0)\big|^2 \, m(t,\ue-y)^{2\alpha}\Big]\\
    & \mspace{36mu} \leq \; 2^\alpha \sum_{e''\in E_d} \mean\Big[ \big|\nabla p(\tfrac{t}{2},e'',0)\big|^2 \, m(t,\ue'')^{2\alpha}\Big]\\
    & \mspace{36mu} \lesssim \; \, (t+1)^{-(\frac d 2 +1) +\varepsilon},  
  \end{align*}
  where the last inequality holds by Proposition~\ref{P:HK}. Since $e,e' \in E_d$ are arbitrary, the claim follows.
\medskip

\step 2 Estimate for $\nabla p$ and $n\gg 1$.
\smallskip

Proposition~\ref{P:HK} yields
\begin{eqnarray*}
  \mean\Big[ \big|\nabla p(t,y)\big|^{n}\Big]^{\frac{1}{n}} &=&  \mean\Big[\big(m(t,y)^\alpha \, |\nabla p(t,y)|\big)^{n}\Big]^{\frac{1}{n}} \, m(t,y)^{-\alpha}\\
  &\leq&   \mean\bigg[\Big(\sum_{x\in\bbZ^d}m(t,x)^{2\alpha} \big|\nabla p(t,0,x)\big|^{2}\Big)^\frac{n}{2}\bigg]^{\frac{1}{n}} \, m(t,y)^{-\alpha}\\
  &\leq& \mean\big[\mathcal Z_t^{n}\big]^{\frac{1}{n}} \, (t+1)^{-(\frac{d}{4}+\frac{1}{2})+\e} \, m(t,y)^{-\alpha}\\
                                                            &\lesssim& (t+1)^{-(\frac{d}{4}+\frac{1}{2})+\e} \, m(t,y)^{-\alpha},
\end{eqnarray*}
where we used that $\mathcal{Z}_t$ defined in \eqref{eq:defZt} satisfies $\sup_{t\geq 0}\mean[\mathcal Z_t^{n}]^{\frac{1}{n}}<\infty$.
\medskip

\step 3 Estimates for $\nabla p$ and $\nabla\nabla p$ for $n>1$.

For $n>1$ we obtain the claimed estimates by interpolation of the estimates in Step~1 and Step~2 via H\"older's inequality in form of
\begin{equation*}
   \|u\|_{L^n(\Omega)}\leq\|u\|_{L^1(\Omega)}^{\lambda}\|u\|_{L^{\frac{n(1-\lambda)}{1-\lambda n}}(\Omega)}^{1-\lambda}, \qquad  0<\lambda<\frac{1}{n}.
\end{equation*}
Indeed, applied to $u=\nabla p$, we obtain
\begin{eqnarray*}
  \mean\Big[\big|\nabla p(t,y)\big|^n\Big]^\frac{1}{n}&\lesssim& (t+1)^{-(\frac{d}{2}+\frac{1}{2})\lambda-(\frac{d}{4}+\frac{1}{2})(1-\lambda)+\e} \, m(t,y)^{-\alpha}\\
  &=&(t+1)^{-(\frac{d}{2}+\frac{1}{2})+\frac{d}{4}(1-\lambda)+\e} \, m(t,y)^{-\alpha},
\end{eqnarray*}
and the claimed estimate follows by choosing $\lambda$ close to $\frac{1}{n}$. For $\nabla\nabla p$ first notice that by the triangle inequality we have 
\begin{equation*}
  \mean\Big[\big|\nabla\nabla p(t,x)\big|^n\Big]^{\frac{1}{n}}\;\lesssim \; \max_{|x-x'|\leq 1}\mean\Big[\big|\nabla p(t,x')\big|^n\Big]^{\frac{1}{n}}.
\end{equation*}
Now, the estimate follows by interpolating the estimate for $\nabla\nabla p$ for $n=1$ with the estimate for $\nabla p$ for $n'\gg n$.
\end{proof}
 
\begin{proof}[Proof of Corollary~\ref{L:AG}]
  First note that for $\theta>0$ and $0\leq \alpha<2\theta$, we have for all $x\in\bbZ^d$ the elementary estimate
  \begin{equation}\label{L:AG:eq1}
    \int_0^\infty(t+1)^{-\theta-1}\, m(t,x)^{-\alpha}\,dt \; \leq \; c(d,\alpha,\theta)\, \big(|x|+1\big)^{-\alpha},
  \end{equation}
  which follows by using that $m(t,x)^{-\alpha}\lesssim \big(|x|+1\big)^{-\alpha} \big(t+1\big)^{\alpha/2}$. Now, the claimed estimates follow from the identities $\nabla G(x,0)=\int_0^\infty\nabla p(t,x,0)\,dt$ and $\nabla\nabla G(x,0)=\int_0^\infty \nabla \nabla p(t,x,0)\,dt$, the estimates in Corollary~\ref{C:AHK} and \eqref{L:AG:eq1} by choosing $\alpha$ close to $2\theta$.
\end{proof}

\section{Semigroup decay -- Proof of Theorem~\ref{T:decay}}\label{S:decay}
In this section we prove Theorem~\ref{T:decay}, which yields a rate for the decay of the semigroup $P_t:L^\infty(\Omega)\to L^\infty(\Omega)$ defined by $P_tu(\omega):=\sum_{y\in\bbZ^d}p^\omega(t,0,y)\, u(\tau_y\omega)$. First we recall some basic facts on the semigroup $(P_t)_{t\geq 0}$. We refer to \cite{GNO15} for details.
\begin{itemize}
\item Since the heat kernel is normalised such that $\sum_{y\in\bbZ^d}p^\omega(t,0,y)=1$, the semigroup $P_t$ is a contraction on $L^\infty(\Omega)$.
\item The semigroup is characterised by a discrete heat equation on $\bbZ^d$. The connection is based on the stationary extension that associates a random variable, say $u(\omega)$, with the random field $\bar u(\omega,x):=u(\tau_x\omega)$ called the stationary extension of $u$. Now, consider $u\in L^\infty(\Omega)$ and $v(t):=P_t u$. Then for $\prob$-a.e. $\omega$, the function $\bar v(\omega,\cdot,\cdot):[0,\infty)\times\bbZ^d\to\bbR$, $\bar v(\omega,t,x):=v(t,\tau_x\omega)$ is the unique solution in $C([0,\infty),\ell^\infty(\bbZ^d))\cap C^1((0,\infty),\ell^\infty(\bbZ^d))$ to the Cauchy problem
\begin{equation}
  \label{eq:hea-eq}
  \begin{aligned}
    (\partial_t+\nabla^*\bomega\nabla)\bar v \; = \; &0&\qquad&\text{on }(0,\infty)\times\bbZ^d,\\
    \bar v(0,\cdot)\; = \; &\bar u&&\text{on }\bbZ^d,
  \end{aligned}
\end{equation}
which directly follows from the definition of the semigroup.
\item An alternative characterisation by a Cauchy problem in $L^\infty(\Omega)$ is as follows. The stationary extension $\overline{(\cdot)}$, the discrete derivatives $\nabla_i,\nabla_i^*$ and the horizontal derivatives $D_i,D_i^*$ (see Section~\ref{sec:notation}) are related by the identities
\begin{equation*}
  \nabla_i\bar u(\omega,x)=\overline{(D_iu)}(\omega,x),\qquad   \nabla_i^*\bar u(\omega,x)=\overline{(D_i^*u)}(\omega,x).
\end{equation*}
Therefore, \eqref{eq:hea-eq} is equivalent to the Cauchy problem in $L^\infty(\Omega)$ given by
\begin{equation}\label{eq:hea-prob}
  \begin{aligned}
    (\partial_t+D^*\bomega(0)D)v \; = \; &0&\qquad&\text{for }t>0,\\
    v(0)\; = \; &u.&&
  \end{aligned}
\end{equation}
\item The family $(P_t)_{t\geq 0}$ is the Markovian transition semigroup associated with the $\Omega$-valued process $\{ \tau_{X_t} \om \}_{t\geq 0}$, which is known as the process of the ``environment as seen from the particle''. Furthermore, if $\prob$ is stationary and ergodic, and \eqref{ass:moment} holds, then the measure $\prob$ is stationary, reversible and ergodic for the environment process $\{ \tau_{X_t} \om \}_{t\geq 0}$ and its semigroup $(P_t)_{t\geq 0}$, respectively (see e.g.\ \cite[Lemma~2.4]{ADS15}).

\end{itemize}

\subsection{Proof of Theorem~\ref{T:decay}}
We follow the argument of \cite[Theorem~1]{GNO15}, where the optimal estimate is obtained in the case of uniformly elliptic coefficients. In our setting the lack of uniform ellipticity leads to a loss of decay, since at various places we use H\"older's inequality and the moment conditions in order to move the conductances outside (or inside) of some integrals. A central element is the weighted $\ell^2$-regularity estimate for the gradient of the heat kernel obtained in Proposition~\ref{P:HK}, which we apply in form of the following two estimates, the proof of which are postponed to Section~\ref{S:auxLemmas}.
\begin{lemma}\label{L:decay-aux}
  Let $\varepsilon\in(0,1)$, $n\geq\frac{d}{2\varepsilon}$ and $\theta\in(1,2)$. There exists $p,q\in(1,\infty)$ (only depending on $d,n,\e,\theta$) and a constant $c=c(d,n,\e,\theta,M(p,q))$ such that if $M(p,q)<\infty$ the following holds.
  For all $F\in L^{2n\theta}(\Omega,\bbR^d)$ and $t\geq 0$,
  \begin{eqnarray*}
    \mean \Bigg[ \bigg(\sum_{y\in \bbZ^d} \Big(\nabla p(t,y)\cdot \bar F(y)\Big)^2\bigg)^{\!n} \, \Bigg]^{\frac 1{2n}} \; \leq \; c \,  (t+1)^{-(\frac{d}{4}+\frac{1}{2})+\varepsilon} \, \mean\Big[|F|^{2n\theta}\Big]^{\frac{1}{2n\theta}},
  \end{eqnarray*}
  where $\bar F(\omega,x):=F(\tau_x\omega)$.
\end{lemma}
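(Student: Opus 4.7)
\textbf{Proof plan for Lemma~\ref{L:decay-aux}.}
The starting point is pointwise Cauchy--Schwarz, $(\nabla p(t,y)\cdot \bar F(y))^2\leq |\nabla p(t,y)|^2|\bar F(y)|^2$, which reduces the task to estimating the $L^n(\Omega)$ norm of $S(\omega)\ldef\sum_{y}|\nabla p(t,y)|^2|\bar F(y)|^2$. The idea is to decouple the random gradient of the heat kernel from the stationary field $\bar F$ by a suitable weight. Introducing the weight $m(t,y)^{2\alpha}$ from \eqref{def:m_alpha} with a parameter $\alpha>0$ to be fixed, we write
\begin{equation*}
S(\omega)\;\leq\; \Bigg(\sum_{y\in\bbZ^d} m(t,y)^{2\alpha}\,|\nabla p(t,y)|^2\Bigg)\cdot \sup_{y\in\bbZ^d} m(t,y)^{-2\alpha}|\bar F(y)|^2.
\end{equation*}
The first factor is exactly what Proposition~\ref{P:HK} controls; the second factor can be bounded by $\sum_y m(t,y)^{-2n\alpha}|\bar F(y)|^{2n}$ once we raise everything to the $n$-th power.

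Taking $n$-th powers, expectations, and applying H\"older's inequality in $\Omega$ with exponents $(r,r')$ with $\tfrac{1}{r}+\tfrac{1}{r'}=1$, we arrive at
\begin{equation*}
\mean[S^n]\;\leq\;\mean\bigg[\Big(\!\sum_{y}m(t,y)^{2\alpha}|\nabla p(t,y)|^2\Big)^{\!nr}\bigg]^{\!1/r}\cdot\mean\bigg[\Big(\!\sum_{y}m(t,y)^{-2n\alpha}|\bar F(y)|^{2n}\Big)^{\!r'}\bigg]^{\!1/r'}.
\end{equation*}
For the first factor, Proposition~\ref{P:HK} (applied with the parameter $\varepsilon/2$ in place of $\varepsilon$, and with exponent $nr$ in place of $n$) yields a bound of the form $c\,(t+1)^{-n(d/2+1)+n\varepsilon}$, provided $M(p,q)$ is finite with $p,q$ large enough to control $\mean[\mathcal Z_t^{2nr}]$. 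For the second factor, Minkowski's inequality for the sum (which requires $r'\geq 1$) combined with the stationarity $\mean[|\bar F(y)|^{2nr'}]=\mean[|F|^{2nr'}]$ gives
\begin{equation*}
\mean\bigg[\Big(\!\sum_{y}m(t,y)^{-2n\alpha}|\bar F(y)|^{2n}\Big)^{\!r'}\bigg]^{\!1/r'}\;\leq\;\mean[|F|^{2nr'}]^{1/r'}\sum_{y\in\bbZ^d}m(t,y)^{-2n\alpha}.
\end{equation*}

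Setting $r'=\theta$ (which is $>1$ by assumption, so the application of Minkowski and H\"older are both legitimate) matches the right-hand side of the lemma. The remaining spatial sum $\sum_{y}m(t,y)^{-2n\alpha}$ can be evaluated by comparison with a Riemann integral: provided $2n\alpha>d$ (which we can arrange by choosing $\alpha>\tfrac{d}{2n}$), it is bounded by $c\,(t+1)^{d/2}$. Putting everything together and taking the $1/(2n)$-th root yields
\begin{equation*}
\mean[S^n]^{1/(2n)}\;\leq\; c\,(t+1)^{-(d/4+1/2)+\varepsilon/2+d/(4n)}\,\mean[|F|^{2n\theta}]^{1/(2n\theta)}.
\end{equation*}
The only non-trivial balancing is to absorb the loss $(t+1)^{d/(4n)}$ into the slack $\varepsilon$; this is precisely where the hypothesis $n\geq d/(2\varepsilon)$ enters, as it gives $d/(4n)\leq\varepsilon/2$. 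The main technical point is thus the careful coupling between the weight exponent $\alpha$, the moment order $n$ and the deterioration parameter $\varepsilon$ in Proposition~\ref{P:HK}; the moment condition $M(p,q)<\infty$ only needs to be strong enough to make $\mean[\mathcal Z_t^{2n\theta/(\theta-1)}]$ finite, which determines the exponents $p,q$ in the statement.
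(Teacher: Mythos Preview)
Your proof is correct and follows essentially the same route as the paper. Both introduce the weight $m(t,y)^{2\alpha}$ with $\alpha>\tfrac{d}{2n}$, invoke Proposition~\ref{P:HK} for the weighted $\ell^2$-norm of $\nabla p$, separate $\mathcal Z_t$ from $\bar F$ by H\"older in $\Omega$ with exponents $(\tfrac{\theta}{\theta-1},\theta)$, and absorb the loss $(t+1)^{d/(4n)}$ via the hypothesis $n\geq \tfrac{d}{2\varepsilon}$. The only cosmetic difference is that the paper applies H\"older in the $y$-sum with exponents $(\tfrac{n}{n-1},n)$ followed by the $\ell^{n/(n-1)}$--$\ell^1$ estimate, whereas you take the $\sup$ and then bound $(\sup_y)^n$ by the $\ell^1$-sum; after raising to the $n$-th power both routes land on the same inequality $S^n\leq \mathcal Z_t^{2n}(t+1)^{\ldots}\sum_y m(t,y)^{-2n\alpha}|\bar F(y)|^{2n}$.
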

\begin{remark}
  In the uniformly elliptic case Lemma~\ref{L:decay-aux} holds with $\varepsilon=0$ and $\theta=1$, which can be shown along the lines of \cite{GNO15}. For our purpose it is important that $\theta$ and $\ve$ can be chosen arbitrarily close to $1$ and $0$, respectively, provided we suppose a sufficiently strong moment condition.
\end{remark}
\begin{lemma}\label{L:decay-aux-2}
  Let $\varepsilon\in(0,1)$ and $n\geq\frac{d}{2\varepsilon}$. There exist $p,q\in(1,\infty)$ (only depending on $d,n,\e$) and a constant $c=c(d,n,\e,M(p,q))$ such that if $M(p,q)<\infty$ the following holds.
  For any random field $H:\Omega\times E_d \to\bbR^d$ and $t\geq0$,
  \begin{align*}
    \mean \Bigg[ \bigg(\sum_{e\in E_d}\Big(\sum_{z\in \bbZ^d} \nabla p(t,z) \cdot H(\tau_z\omega,e-z)\Big)^{\!2}\bigg)^{\! n} \, \Bigg]^{\! \frac 1{2n}}
  \;  \leq \; c \, (t+1)^{-(\frac{d}{4}+\frac{1}{2})+\varepsilon}\sum_{e\in E_d}\mean\Big[\big|H(\omega,e)\big|^{4n}\Big]^{\! \frac{1}{4n}}.
  \end{align*}
\end{lemma}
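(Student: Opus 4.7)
My strategy is to reduce Lemma~\ref{L:decay-aux-2} directly to Lemma~\ref{L:decay-aux} by decomposing the edge argument of $H$. The key observation is that, writing
\begin{equation*}
H(\tau_z\omega, e-z) \;=\; \sum_{e_0 \in E_d} H(\tau_z\omega, e_0)\,\indicator_{\{e - z = e_0\}},
\end{equation*}
the constraint $e = e_0 + z$ forces $e$ and $e_0$ to be parallel and then uniquely determines $z = \underline{e}-\underline{e_0}$. Hence each $e_0$ contributes to
\begin{equation*}
A_e \;:=\; \sum_z \nabla p(t,z)\cdot H(\tau_z\omega, e-z) \;=\; \sum_{e_0 \in E_d} a_e^{(e_0)},\qquad a_e^{(e_0)} \;:=\; \nabla p(t,\underline{e}-\underline{e_0})\cdot H(\tau_{\underline{e}-\underline{e_0}}\omega, e_0)\,\indicator_{\{e\parallel e_0\}},
\end{equation*}
through a single term for each $e$ parallel to $e_0$.

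Setting $F^{(e_0)}(\omega) := H(\omega, e_0) \in \bbR^d$ with stationary extension $\bar F^{(e_0)}(\omega,z)=F^{(e_0)}(\tau_z\omega)$, the bijection $z \mapsto \{\underline{e_0}+z, \underline{e_0}+z+e_i\}$ (for $e_0$ in direction $i$) identifies
\begin{equation*}
\sum_{e\in E_d} \bigl(a_e^{(e_0)}\bigr)^2 \;=\; \sum_{z\in\bbZ^d} \bigl(\nabla p(t,z)\cdot \bar F^{(e_0)}(z)\bigr)^2,
\end{equation*}
which is precisely the quantity controlled by Lemma~\ref{L:decay-aux} applied to $F=F^{(e_0)}$. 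I would then apply Minkowski's inequality first in $\ell^2(E_d)$ (pointwise in $\omega$), $\|A(\omega)\|_{\ell^2}\leq\sum_{e_0}\|a^{(e_0)}(\omega)\|_{\ell^2}$, and then in $L^{2n}(\Omega)$, obtaining
\begin{equation*}
\mean\Bigl[\Bigl(\sum_e A_e^2\Bigr)^{\!n}\Bigr]^{\!\frac{1}{2n}} \;\leq\; \sum_{e_0 \in E_d} \mean\Bigl[\Bigl(\sum_e (a_e^{(e_0)})^2\Bigr)^{\!n}\Bigr]^{\!\frac{1}{2n}}.
\end{equation*}
Invoking Lemma~\ref{L:decay-aux} term-by-term with any fixed $\theta\in(1,2)$ gives the decay factor $(t+1)^{-(\frac{d}{4}+\frac{1}{2})+\varepsilon}$ together with an $L^{2n\theta}$-norm of $H(\cdot,e_0)$; since $2n\theta<4n$, a final application of H\"older's inequality $\|H(\cdot,e_0)\|_{L^{2n\theta}}\leq \|H(\cdot,e_0)\|_{L^{4n}}$ matches the exponent in the statement.

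The argument requires no new analytic input beyond Lemma~\ref{L:decay-aux}; the exponents $p,q$ and the constant are inherited from that lemma (the factor $4n$ on the right-hand side, versus $2n\theta$ in Lemma~\ref{L:decay-aux}, is the only slack introduced, and it accommodates the loss of $\theta < 2$ in the predecessor lemma). I expect the main difficulty to be purely combinatorial rather than analytic: carefully tracking the parallelism condition $e\parallel e_0$ and verifying that the bijection between $E_d^{(i)}$ and $\bbZ^d$ correctly produces the $\bar F^{(e_0)}$-shape of Lemma~\ref{L:decay-aux}. Once this bookkeeping is in place, the two Minkowski inequalities and one H\"older step finish the proof.
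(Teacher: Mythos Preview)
Your proposal is correct and follows essentially the same approach as the paper's proof: both decompose the inner sum according to the shifted edge $e_0=e-z$ (which the paper writes via the parametrization $e=\{x,x+e_i\}$, $e_0=\{y,y+e_i\}$), observe that this reduces $\sum_e (a_e^{(e_0)})^2$ to exactly the quantity handled by Lemma~\ref{L:decay-aux} with $F=H(\cdot,e_0)$, apply Minkowski to pull the sum over $e_0$ outside the $L^{2n}(\Omega,\ell^2(E_d))$-norm, and finish with H\"older to pass from the $L^{2n\theta}$-norm to the $L^{4n}$-norm. Your explicit description of the parallelism constraint and the bijection $e\mapsto z=\underline e-\underline{e_0}$ makes the bookkeeping slightly more transparent than the paper's index substitutions, but the argument is the same.
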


For the proof of Theorem~\ref{T:decay} we further need  a non-linear Caccioppoli inequality for the operator $D^*\bomega(0) D$. The following result is an extension of \cite[Lemma~5]{GNO15} to the degenerate setting.

\begin{lemma}[Caccioppoli] \label{lem:cacc}
  Let $v(t)=P_tu$. Then for every $n\in\bbN$ and $1<\theta<2$ we have
\begin{align*}
  \mean \Big[\big|Dv(t)\big|^{2n\theta}  \Big]^{\frac{1}{2n\theta}} \; \leq \; \left(- c \, \frac d {dt} \mean\big[v(t)^{2n}\big]\right)^{\! \frac{1}{2n}\frac{2-\theta}{\theta}}
  \, \mean\big[|u|^{8n}\big]^{\frac{1}{8n}\frac{2\theta-2}{\theta}} \mean\Big[\big|\bomega(0)^{-1}\big|^\frac{2(2-\theta)}{\theta-1}\Big]^{\frac{\theta-1}{4n\theta}}
\end{align*}
with $c=c(d,n,\theta)$. 
\end{lemma}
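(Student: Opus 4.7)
The plan is to adapt the Caccioppoli-type calculation of \cite[Lemma~5]{GNO15} to the degenerate setting, where the new difficulty is that powers of $\bomega(0)^{-1}$ no longer absorb into deterministic constants and must be tracked explicitly. The starting point is the differentiation identity obtained from $\partial_t v = -D^*\bomega(0)Dv$ and integration by parts in $L^2(\Omega)$,
\begin{equation*}
  -\frac{d}{dt}\mean[v^{2n}] \;=\; 2n\sum_{i=1}^d\mean\bigl[\omega(0,e_i)\,D_i(v^{2n-1})\,D_iv\bigr].
\end{equation*}
Setting $V_n(t):=t|t|^{n-1}$ and $\tilde v := V_n\circ v$, Cauchy--Schwarz on the integral representation $V_{2n-1}(y) - V_{2n-1}(x) = \int_x^y(2n-1)|t|^{2n-2}dt$ produces the pointwise algebraic inequality $(y^{2n-1}-x^{2n-1})(y-x)\geq \tfrac{2n-1}{n^2}(V_n(y)-V_n(x))^2$ valid for all $x,y\in\bbR$, which applied with $y=v(\tau_{e_i}\omega)$ and $x=v(\omega)$ and then summed in $i$ yields the energy identity $-\tfrac{d}{dt}\mean[v^{2n}]\geq c_n\,\mean[|D\tilde v|^2_{\bomega(0)}]$. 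A second elementary inequality $|V_n(y)-V_n(x)|\geq 2^{1-n}|y-x|^n$, obtained from the monotonicity of $V_n$ by binomial expansion when $x,y$ have the same sign and by the power-mean inequality when $x\leq 0\leq y$, then gives $|D_i\tilde v|^2\geq c_n(D_iv)^{2n}$, so that for each $i$,
\begin{equation*}
  \mean\bigl[\omega(0,e_i)(D_iv)^{2n}\bigr] \;\leq\; C_n\Bigl(-\frac{d}{dt}\mean[v^{2n}]\Bigr).
\end{equation*}

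The heart of the argument is to interpolate between this weighted energy bound and the unweighted contractivity bound $\mean[|D_iv|^{8n}]\leq C_n\mean[|u|^{8n}]$ (which follows from $|D_iv|\leq|v(\tau_{e_i}\cdot)|+|v|$, the stationarity of $\prob$, and the fact that $P_t$ is an $L^{8n}$-contraction). With $\alpha_1 := 2-\theta$ and $\alpha_2:=4n(\theta-1)$, the pointwise identity
\begin{equation*}
  |D_iv|^{2n\theta} \;=\; \bigl(\omega(0,e_i)(D_iv)^{2n}\bigr)^{\alpha_1}\cdot|D_iv|^{\alpha_2}\cdot\omega(0,e_i)^{-\alpha_1}
\end{equation*}
invites three-factor H\"older with exponents $p_1 = 1/(2-\theta)$, $p_2 = p_3 = 2/(\theta-1)$. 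These satisfy $1/p_1+1/p_2+1/p_3 = (2-\theta)+(\theta-1) = 1$ and, crucially, produce exponents $2-\theta$, $(\theta-1)/2$, $(\theta-1)/2$ on the three resulting expectations -- exactly those that appear (after raising to the $1/(2n\theta)$-th power) in the target statement. Inserting the energy bound and the contractivity bound, using $\omega(0,e_i)^{-1}\leq|\bomega(0)^{-1}|$ pointwise, summing over $i$ via $|Dv|^{2n\theta}\lesssim_d\sum_i|D_iv|^{2n\theta}$, and extracting the $1/(2n\theta)$-th root then gives the claim.

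The main obstacle is not conceptual but rather careful bookkeeping: the two algebraic inequalities in the first paragraph require a case analysis on the signs of $x,y\in\bbR$ (they are not immediate because $v$ takes both signs, so that the naive estimate $|y^n - x^n|^2 \geq c_n(y-x)^2\max(|x|,|y|)^{2(n-1)}$ fails and one is forced to work with $V_n$ rather than $t\mapsto t^n$), and one has to identify $\alpha_1 = 2-\theta$ as the unique interpolation parameter that produces the target exponent $\tfrac{1}{2n}\tfrac{2-\theta}{\theta}$ on the energy factor. In the uniformly-elliptic case $\bomega(0)^{-1}\in L^\infty(\Omega)$, the third H\"older factor becomes a constant and the interpolation degenerates (effectively $\theta\downarrow 1$), recovering the form of \cite[Lemma~5]{GNO15}.
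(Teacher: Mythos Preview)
Your proof is correct and follows essentially the same route as the paper's. The paper cites the elementary inequality $(a-b)^{2n}\lesssim(a^{2n-1}-b^{2n-1})(a-b)$ directly, whereas you derive it in two steps via $V_n(t)=t|t|^{n-1}$; and the paper organises the interpolation as a two-factor H\"older with exponents $(\tfrac{1}{2-\theta},\tfrac{1}{\theta-1})$ followed by Cauchy--Schwarz on the second factor, which is exactly your three-factor H\"older with $(\tfrac{1}{2-\theta},\tfrac{2}{\theta-1},\tfrac{2}{\theta-1})$ unpacked.
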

A last ingredient is the decay estimate on ordinary differential inequalities.
\begin{lemma} \label{lem:ODE}
Assume that 
\begin{align*}
0 & \leq \; a(t) \; \leq \; c_0 \, \left( \big(t+1\big)^{-\gamma} + \int_0^t \big(t-s+1\big)^{-\gamma} \, b^\delta(s) \, ds \right), \\
0 & \leq \;  b^{2n}(t) \; \leq \; - \frac d {dt} \big[ a^{2n}(t) \big]  
\end{align*}
with $n\in [1,\infty)$, $\gamma\in [1,\infty)$ and $\delta \in \big(\frac{\gamma}{\gamma+\frac 1 {2n}},1\big)$. Then, there exists  $c=c(n, \gamma, \delta, c_0)<\infty$ such that
\begin{align*}
a(t) \; \leq \; c \, (t+1)^{-\gamma}.
\end{align*}
\end{lemma}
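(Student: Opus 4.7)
The plan is a classical bootstrap-dissipation argument that progressively improves the decay rate of $a$ until the target $(t+1)^{-\gamma}$ is reached. First I would extract a priori bounds from the second inequality: since $b^{2n}\geq 0$, the function $a^{2n}$ is non-increasing on $[0,\infty)$, and evaluating the first inequality at $t=0$ gives $a(0)\leq c_0$, whence $a(t)\leq c_0$ for all $t\geq 0$. Integrating the differential inequality from $\tau$ to $+\infty$ yields the tail dissipation bound $\int_\tau^\infty b^{2n}(s)\,ds \leq a^{2n}(\tau)$ for every $\tau\geq 0$; in particular $\int_0^\infty b^{2n}\leq c_0^{2n}$.

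The heart of the proof is a self-improving bootstrap step: under the working hypothesis that $a(t)\leq K(t+1)^{-\alpha}$ for all $t\geq 0$ with some $\alpha\in[0,\gamma]$ and $K\geq c_0$, one derives an improved bound
\[
a(t)\;\leq\; K'(t+1)^{-\alpha'}, \qquad \alpha'=\min\!\bigl(\gamma,\,\delta\alpha + \tfrac{\delta}{2n}\bigr), \qquad K'\leq c_1\bigl(c_0+K^\delta\bigr),
\]
with $c_1$ depending only on $n,\gamma,\delta$. Indeed, the tail dissipation bound sharpens to $\int_\tau^\infty b^{2n} \leq K^{2n}(\tau+1)^{-2n\alpha}$, and one then estimates $\int_0^t(t-s+1)^{-\gamma}b^\delta(s)\,ds$ by decomposing $[0,t]$ dyadically and applying H\"older's inequality on each piece with conjugate exponents $(\tfrac{2n}{\delta},\tfrac{2n}{2n-\delta})$, using the condition $\gamma\geq 1$ to ensure local integrability of the kernel near $s=t$. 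Starting from $\alpha_0=0$, $K_0=c_0$, iterating the map $\alpha\mapsto\min(\gamma,\,\delta\alpha + \tfrac{\delta}{2n})$ produces a strictly increasing sequence of exponents whose unconstrained fixed point is $\alpha^\ast=(\delta/(2n))/(1-\delta)$. The standing condition $\delta>\gamma/(\gamma+1/(2n))$ is precisely equivalent to $\alpha^\ast>\gamma$, so finitely many iterations suffice to reach $\alpha_k=\gamma$, and the constants $K_k$ stay uniformly bounded because $\delta<1$ makes the recursion $K_{k+1}\leq c_1(c_0+K_k^\delta)$ contracting for large $K_k$.

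The main obstacle is Step 2: extracting the additive gain $\delta/(2n)$ in the bootstrap from the \emph{integrated} dissipation bound (a pointwise bound on $b$ is not available, since $a^{2n}$ could in principle drop abruptly). This is done by the dyadic H\"older analysis together with the tail bound, which effectively compares $b$ to the extremal saturating profile $b(s)\sim K(s+1)^{-\alpha - 1/(2n)}$ that saturates the dissipation constraint and produces the heuristic decay of $b^\delta$ needed for the convolution estimate. The threshold on $\delta$ is sharp for this mechanism: it is exactly what is required so that the self-improving map has its fixed point above $\gamma$.
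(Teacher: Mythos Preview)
The paper does not prove this lemma; it simply records that it is a generalisation of \cite[Lemma~15]{GNO15} obtained in \cite[Lemma~3.1]{GM16}. Your overall strategy --- a bootstrap on the decay exponent of $a$, using the tail dissipation bound $\int_\tau^\infty b^{2n}\leq a^{2n}(\tau)$ and H\"older --- is the natural one, and your identification of the fixed point $\alpha^\ast=\frac{\delta/(2n)}{1-\delta}$ together with the equivalence of the threshold condition on $\delta$ with $\alpha^\ast>\gamma$ is correct and to the point.

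There is, however, a genuine gap in the bootstrap step as you describe it. The claimed improvement $\alpha\mapsto \delta\alpha+\tfrac{\delta}{2n}$ does \emph{not} follow from the dyadic H\"older analysis on the near-diagonal piece $[t/2,t]$. With only the constraint $\int_{t/2}^{\infty} b^{2n}\leq K^{2n}(t/2+1)^{-2n\alpha}$ available there, H\"older with exponents $\bigl(\tfrac{2n}{\delta},\tfrac{2n}{2n-\delta}\bigr)$ gives
\[
\int_{t/2}^{t}(t-s+1)^{-\gamma}b^\delta(s)\,ds
\;\le\;\Bigl(\int_{0}^{t/2}(u+1)^{-\frac{2n\gamma}{2n-\delta}}\,du\Bigr)^{\!\frac{2n-\delta}{2n}}
\Bigl(\int_{t/2}^{\infty}b^{2n}\Bigr)^{\!\frac{\delta}{2n}}
\;\lesssim\;K^\delta(t+1)^{-\delta\alpha},
\]
with no additional $\tfrac{\delta}{2n}$ gain. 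This bound is sharp: the constrained maximisation of $\int_{t/2}^{t}(t-s+1)^{-\gamma}b^\delta$ subject to $\int_{t/2}^{t}b^{2n}\le M$ (a concave functional on a convex set) is attained at $b^{2n}(s)\propto(t-s+1)^{-2n\gamma/(2n-\delta)}$ and gives exactly $CM^{\delta/(2n)}$. In particular, starting from $\alpha_0=0$ your iteration returns only $a(t)\le c_0+CK_0^{\delta}$ and never leaves the ground.

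The missing ingredient is that one cannot close the bootstrap using the tail bound alone on $[t/2,t]$; the full system (the integral inequality at \emph{all} times, not just at the terminal $t$, coupled with the differential inequality) must be exploited to rule out the extremal profiles above. The proof in \cite{GM16} handles this, and your sketch would need an additional argument at this point --- for instance, a second layer of iteration that feeds the first inequality evaluated on the interval $[t/2,t]$ back into the dissipation bound, or a direct Gr\"onwall-type closure on a weighted quantity --- before the exponent map you wrote down becomes valid.
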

This is a generalisation of \cite[Lemma~15]{GNO15}, obtained in \cite[Lemma~3.1]{GM16}.
\begin{proof}[Proof of Theorem~\ref{T:decay}]
 Let $\theta\in(1,2)$ to be chosen later. In the following $\lesssim$ stands for $\leq$ up to a constant only depending on $d,\rho,n,\theta,\e$ and $M(p,q)$. 
  For abbreviation we set $\gamma:=(\frac{d}{4}+\frac{1}{2})-\varepsilon$,  $u:=D^*F$ and $v(\omega,t):=P_tu(\omega)=\sum_{y\in\bbZ^d}p^\omega(t,0,y) \, \bar u(\omega,y)$. 
  By the homogeneity of the estimate it suffices to consider the case 
  \begin{equation}\label{T:decay:eq5}
 \sum_{e\in E_d}   \mean\Big[\big|\partial_eF\big|^{8n}\Big]\leq 1.
  \end{equation}
  We claim that for any $\theta\in(1,2)$,
  \begin{equation}\label{P:decay-2:eq1-class}
    \mean\big[v(t)^{2n}\big]^{\frac 1{2n}} \; \lesssim \; (t+1)^{-\gamma}+\int_0^t (t-s+1)^{-\gamma} \, \mean\Big[\big|Dv(s)\big|^{2n\theta}\Big]^{\frac{1}{2n\theta}} \, ds.
  \end{equation}
  We first note that $\mean[u]=0$ implies $\mean[v(t)]=0$ for all $t\geq 0$. Hence,  the $n$-version of the spectral gap estimate in \eqref{eq:sg_p} gives
  \begin{equation}\label{P:decay-2:eq1b-class}
    \mean\big[v(t)^{2n}\big]^{\frac 1{2n}} \; \lesssim \; \mean\bigg[\Big(\sum_{e\in E_d} \big|\partial_ev(t)\big|^{2}\Big)^{\! n}\bigg]^{\frac 1{2n}}.
  \end{equation}
  In order to identify $\partial_e v$, recall that $\bar v(\omega,\cdot,\cdot)$ solves
  \begin{equation}\label{P:decay-2:eq001}
    \begin{aligned}
      (\partial_t+\nabla^*\omega\nabla)\bar v \; = \; &0&&\text{on }(0,\infty)\times\bbZ^d,\\
      \bar v(t=0,\cdot)\; = \; &\bar u(\cdot)&&\text{on }\bbZ^d.
    \end{aligned}
  \end{equation}
Now we apply $\partial_e$ to this equation to get a characterisation of $\partial_e\bar v$. More precisely, since $\partial_e(\nabla^*\omega\nabla\bar v)=\nabla^*\omega\nabla\partial_e\bar v(s,\cdot)+\nabla^*\delta_e(\cdot)\nabla\bar v(s,e)$ (here $\delta_e:E_d\to\{0,1\}$ denotes the Dirac function with $\delta_e(e)=1$ and $\delta_e(e')=0$ for any $e'\neq e$), we have
  \begin{equation*}
    \begin{aligned}
      (\partial_t+\nabla^*\omega\nabla)\partial_e\bar v \; = \; &-\nabla^*\delta_e(\cdot)\nabla\bar v(t,e)&&\text{on }(0,\infty)\times\bbZ^d,\\
      \partial_e\bar v(t=0,\cdot)\; = \;&\partial_e\bar u(\cdot)&&\text{on }\bbZ^d,
    \end{aligned}
  \end{equation*}
  and thus by Duhamel's formula
  \begin{align*}
    \partial_e v(t)\; = \;\partial_e \bar v (t,0) \;=\; \sum_{z\in \bbZ^d} p^\om(t,0,z) \, \partial_e \bar u(z) + \int_0^t \nabla p^\om(t-s,0,e)\nabla\bar v(s,e) \, ds.
  \end{align*}
  We combine this identity with \eqref{P:decay-2:eq1b-class} and apply the triangle inequality to obtain
  \begin{align}
    \mean\big[v(t)^{2n}\big]^{\frac 1{2n}} \; \lesssim \; &  \mean \Bigg[ \Bigg(\sum_{e\in E_d}\Big(\sum_{z\in \bbZ^d} p(t,z) \, \partial_e \bar u(z)\Big)^2\Bigg)^n \, \Bigg]^{\frac 1{2n}} \nonumber  \\ 
    &+
    \int_0^t\mean \Bigg[ \Bigg(\sum_{e\in E_d}\Big(\nabla p(t-s,e) \, \nabla\bar v(s,e)\Big)^2\Bigg)^{\!n} \, \Bigg]^{\frac 1{2n}}\, ds \nonumber\\
    =:\; & I+II.\label{eq:afterSG}
  \end{align}
  For term $I$ note that $\bar u(\omega,x)=\nabla^*\bar F(\omega,x)$ where $\bar F$ denotes the stationary extension of $F$. 
By definition of $\partial_e$, $\nabla^*$ and the stationary extension, we have for any random variable $f$ the general calculus rules $\partial_e(\nabla_if)=\nabla_i(\partial_ef)$ and $\partial_e\bar f(\omega,x)=\overline{(\partial_{\omega(e-x)}f)}(\omega,x)$. Hence,
  \begin{equation*}
    \partial_e\bar u(z)=\partial_e\nabla^*\bar F(z)=\nabla^*\overline{\partial_{\omega(e-z)}F}(z)=\nabla^*H(\tau_z\omega,e-z),
  \end{equation*}
  where $H(\omega,e):=\partial_eF(\omega)$. Hence, using an integration by parts,  Lemma~\ref{L:decay-aux-2} and \eqref{T:decay:eq5} we get
  \begin{align}\label{T:decay:eq4}
    I \; =\ \; \mean \Bigg[ \Bigg(\sum_{e\in E_d}\Big(\sum_{z\in \bbZ^d} \nabla p(t,z) \cdot H(\tau_z\omega,e-z)\Big)^2\Bigg)^n \, \Bigg]^{\frac 1{2n}} \; \lesssim \; (t+1)^{-\gamma}.
  \end{align}
  For term $II$ we use Lemma~\ref{L:decay-aux} and get for any $\theta\in(1,2)$,
  \begin{equation*}
    II \; \lesssim \; \int_0^t (t-s+1)^{-\gamma} \, \mean\Big[\big|Dv(s)\big|^{2n\theta}\Big]^{\frac{1}{2n\theta}} \, ds,
  \end{equation*}
  which  completes the argument for \eqref{P:decay-2:eq1-class}.
  \medskip

  Next we apply Lemma~\ref{lem:cacc} to obtain
  \begin{align} \label{eq:appl_cacc}
   \mean \Big[\big|Dv(t)\big|^{2n\theta}  \Big]^{\frac{1}{2-\theta}} \; \lesssim \; -  \frac d {dt} \mean\big[v(t)^{2n}\big],
  \end{align}
%
%
  %
  where we have used that 
  \begin{equation*}
    \mean\big[|u|^{8n}\big]\; \leq \; \mean\Big[\big|D^*(F-\mean[F])\big|^{8n}\Big] \; \lesssim \; \mean\bigg[\Big(\sum_{e\in E_d}\big|\partial_e F\big|^2\Big)^{4n}\bigg] \; \leq \; 1,
  \end{equation*}
  thanks to \eqref{eq:sg_p}, a discrete $\ell^2$-$\ell^1$-estimate  and \eqref{T:decay:eq5}. Finally, in view of \eqref{P:decay-2:eq1-class} and \eqref{eq:appl_cacc}, choosing $\theta>1$ sufficiently close to $1$ (only depending on $d$, $\varepsilon$ and $n$), we may apply Lemma~\ref{lem:ODE} with $  a(t):=\mean\big[v(t)^{2n}\big]^{\frac 1{2n}}$, $b(t):= \mean \Big[\big|Dv(t)\big|^{2n\theta} 
 \Big]^{\frac{1}{2n(2-\theta)}}$ and $\delta:=\frac {2-\theta}{\theta}$  and get
  \begin{equation*}
    \mean\big[v(t)^{2n}\big]^{\frac 1{2n}}\lesssim (t+1)^{-\gamma},
  \end{equation*}
  which is the claimed estimate.
\end{proof}

\subsection{Proofs of Lemmas~\ref{L:decay-aux},~\ref{L:decay-aux-2} and  \ref{lem:cacc}} \label{S:auxLemmas}

\begin{proof}[Proof of Lemma~\ref{L:decay-aux}]
  In the following $\lesssim$ stands for $\leq$ up to a constant only depending on $d,n, \theta$ and $\e$. Fix $\alpha>\frac{d}{2n}$. Consider $I:=\left(\sum_{y\in \bbZ^d}m(t,y)^{-2n\alpha}\big|\bar F(y)\big|^{2n}\right)^\frac{1}{n}$. H\"older's inequality, the discrete $\ell^{\frac{n}{n-1}}$-$\ell^1$-estimate and  Proposition~\ref{P:HK} yield
\begin{eqnarray*}
  \sum_{y \in \bbZ^d}\Big(\nabla p(t,y)\cdot \bar F(y)\Big)^2
  &\leq&I \, \bigg(\sum_{y\in\bbZ^d} \Big(m(t,y)^{2\alpha} \, \big|\nabla p(t,y)\big|^2\Big)^{\frac{n}{n-1}} \bigg)^{\!\frac{n-1}{n}}\\
  &\leq&I \, \bigg(\sum_{y\in\bbZ^d} m(t,y)^{2\alpha} \, \big|\nabla p(t,y)\big|^2\bigg) \; \leq \; I \, \mathcal Z_t^2 \, (t+1)^{-(\frac{d}{2}+1)+\varepsilon}.
\end{eqnarray*}
We take the expectation of the $n$-th power  to obtain
\begin{align*}
  &\mean\Bigg[\bigg(\sum_{y\in \bbZ^d}\Big(\nabla p(t,y)\cdot \bar F(y)\Big)^2\bigg)^{\!n}\Bigg]^\frac 1 {2n} \\
  \leq \; & (t+1)^{-(\frac{d}{4}+\frac 12)+\frac{\e}{2}}\,\mean\bigg[\mathcal
    Z_t^{2n}\sum_{y\in \bbZ^d}m(t,y)^{-2n\alpha}  \big|\bar F(y)\big|^{2n}\bigg]^\frac{1}{2n}\\
  \leq \; &  (t+1)^{-(\frac{d}{4}+\frac 12)+\frac{\e}{2}}\,\bigg(\sum_{y \in \bbZ^d}m(t,y)^{-2n\alpha}   \mean\Big[\mathcal
          Z_t^{2n} \, \big|\bar F(y)\big|^{2n}\Big]\bigg)^\frac{1}{2n}.
\end{align*}
By H\"older's inequality with exponents $(\theta,\frac{\theta}{\theta-1})$ and by stationarity this can be further estimated from above by
\begin{align*}
  & (t+1)^{-(\frac{d}{4}+\frac 12)+\frac{\e}{2}}\,\mean\Big[\mathcal Z_t^{2n\frac{\theta}{\theta-1}}\Big]^{\frac{1}{2n}\frac{\theta-1}{\theta}}
\, \mean\Big[ \big|F\big|^{2n\theta}\Big]^\frac{1}{2n\theta}
    \, \bigg(\sum_{y\in \bbZ^d} m(t,y)^{-2n\alpha}\bigg)^{\frac{1}{2n}}\\
  \lesssim\; & (t+1)^{-(\frac{d}{4}+\frac 12)+\frac{\e}{2}+\frac{d}{4n}}\,\mean\Big[\mathcal Z_t^{2n\frac{\theta}{\theta-1}}\Big]^{\frac{1}{2n}\frac{\theta-1}{\theta}}
               \, \mean\Big[ \big|F\big|^{2n\theta}\Big]^\frac{1}{2n\theta},
\end{align*}
where we used that $\big(\sum_{y\in\bbZ^d}m(t,y)^{-2n\alpha}\big)^{\frac{1}{2n}}\lesssim (t+1)^{\frac{d}{4 n}}$ thanks to $2n\alpha>d$. Since we may assume that the $2n\frac{\theta}{\theta-1}$-moment of $\mathcal Z_t$ is bounded by a constant independent of $t$, and because $\frac{d}{4n}\leq \frac{\e}{2}$, the proof is complete.
\end{proof}
\begin{proof}[Proof of Lemma~\ref{L:decay-aux-2}]
%
First we compute
\begin{eqnarray*}
 & &\mean \Bigg[ \Bigg(\sum_{e\in E_d}\Big(\sum_{z\in \bbZ^d} \nabla p(t,z) \cdot H\big(\tau_z\omega,e-z\big)\Big)^2\Bigg)^{\! n} \, \Bigg]^{\frac 1{2n}}\\
   &=&\mean \Bigg[ \Bigg(\sum_{x\in\bbZ^d\atop i=1,\ldots,d}\Big(\sum_{z\in \bbZ^d} \nabla p(t,z)\cdot H\big(\tau_z\omega,\{x{-}z,x{-}z{+}e_i\}\big)\Big)^2\Bigg)^{\! n} \, \Bigg]^{\frac 1{2n}}\\
   &\stackrel{y=x-z}{=}&\mean \Bigg[ \Bigg(\sum_{x\in\bbZ^d\atop i=1,\ldots,d}\Big(\sum_{y\in \bbZ^d} \nabla p(t,x-y)\cdot H(\tau_{x-y}\omega,\{y,y{+}e_i\})\Big)^2\Bigg)^{\! n} \, \Bigg]^{\frac 1{2n}}\\
   &\stackrel{x'=x-y}{\leq}&\sum_{e'\in E_d}\mean \Bigg[ \Bigg(\sum_{x'\in\bbZ^d}\Big(\nabla p(t,x')\cdot H(\tau_{x'}\omega,e')\Big)^2\Bigg)^{\! n} \, \Bigg]^{\frac 1{2n}}.
\end{eqnarray*}
In order to estimate the expectation, we apply Lemma~\ref{L:decay-aux} to $F(\omega)=H(\omega,e')$ for any $e'\in E_d$ and obtain
\begin{equation*}
  \mean\Bigg[ \Bigg(\sum_{e\in E_d}\Big(\sum_{z\in \bbZ^d} \nabla p(t,z) \cdot H(\tau_z\omega,e-z)\Big)^2\Bigg)^n \, \Bigg]^{\frac 1{2n}}\leq c \, \big(t+1\big)^{-(\frac{d}{4}+\frac{1}{2})+\varepsilon}\sum_{e'\in E_d}\mean\Big[\big|H(\omega,e')\big|^{2n\theta}\Big]^{\frac{1}{2n\theta}}.
\end{equation*}
Combined with H\"older's inequality  in form of $\mean\Big[\big|H(\omega,e')\big|^{2n\theta}\Big]^{\frac{1}{2n\theta}}\leq \mean\Big[\big|H(\omega,e')\big|^{4n}\Big]^{\frac{1}{4n}}$ the claim follows.
\end{proof}

\begin{proof}[Proof of Lemma~\ref{lem:cacc}]
  We first claim that
  \begin{equation}\label{lem:cacc:eq1}
    I:=\mean \bigg[\sum_{i=1}^d \big|D_iv(t)\big|^{2n} \, \omega(0,e_i)\bigg] \;  \lesssim \; - \frac 1 {2n}  \, \frac d {dt} \mean \big[ v(t)^{2n} \big],
  \end{equation}
  where here and below we write $\lesssim$ if the relation holds up to a constant only depending on $d$, $n$ and $\theta$. Indeed, using the elementary estimate
  \begin{align*}
    \big(a-b\big)^{2n} \; \lesssim \; \big(a^{2n-1}-b^{2n-1}\big) \, \big(a-b\big), \qquad a,b\in\bbR, 
  \end{align*}
 by appealing to \eqref{eq:hea-prob}  we get
  \begin{align*}
     I \; &  \lesssim \; 
      \sum_{i=1}^d \mean \Big[ \om(0,e_i) \, D_i v(t) \, D_i\big(v(t)^{2n-1}\big)  \Big] \\
    & =\; \mean \Big[ D(v(t)^{2n-1}) \cdot \bomega(0) Dv(t) \Big] 
      \;=\; \mean \Big[ v(t)^{2n-1}  \,  D^*\bomega(0) Dv(t) \Big] \\
    &  =\; - \mean \Big[ v(t)^{2n-1} \, \partial_t v(t) \Big]
      \, = \; - \frac 1 {2n}  \, \frac d {dt} \mean \Big[ v(t)^{2n} \Big],
  \end{align*}
  and thus the claimed inequality \eqref{lem:cacc:eq1}. Next, we need to estimate $\mean\big[\big|Dv(t)\big|^{2n\theta}\big]^\frac{1}{2n\theta}$ by the left-hand side in \eqref{lem:cacc:eq1}. To that end set $\theta_0=2-\theta$, so that $\theta=\theta_0+2(\theta-1)$. H\"older's inequality with exponents $(\frac{1}{\theta_0},\frac{1}{\theta-1})$ yields
  \begin{eqnarray*}
    \mean\left[\big|Dv(t)\big|^{2n\theta}\right]
    &\lesssim&    \sum_{i=1}^d\mean\left[|D_iv(t)|^{2n\theta_0}\omega(0,e_i)^{\theta_0}|D_iv(t)|^{2n(\theta-\theta_0)}\omega(0,e_i)^{-\theta_0}\right]\\
    &\leq&    \left(\sum_{i=1}^d\mean\left[|D_iv(t)|^{2n}\omega(0,e_i)\right]\right)^{\theta_0}\left(\sum_{i=1}^d\mean\left[|D_iv(t)|^{4n}\omega(0,e_i)^{-\frac{2-\theta}{\theta-1}}\right]\right)^{\theta-1}\\
    &\lesssim&    I^{2-\theta}
    \mean\left[|Dv(t)|^{8n}\right]^{\frac{\theta-1}{2}}
    \mean\Big[\big|\bomega(0)^{-1}\big|^{\frac{2(2-\theta)}{\theta-1}}\Big]^{\frac{\theta-1}{2}}.
  \end{eqnarray*}
Using $|Dv(\omega,t)|\lesssim \sum_{|x| \leq 1}|v(\tau_{x}\omega,t)|$, the shift-invariance of $\prob$ and the contractivity of the semigroup $P_t:L^{8n}(\Omega)\to L^{8n}(\Omega)$,  we deduce that
  \begin{equation*}
    \mean\left[|Dv(t)|^{8n}\right]\lesssim \mean\left[|v(t)|^{8n}\right]\leq\mean\left[|u|^{8n}\right].
  \end{equation*}
 In combination with \eqref{lem:cacc:eq1} these estimates give the claim.
\end{proof}

\section{Moment bounds for the extended corrector: Proof of Proposition~\ref{C:corrector}} \label{sec:corr}
In this section, unless stated otherwise, $\xi$ denotes one of the coordinate vectors $e_1,\ldots,e_d$, and we drop the index in the notation for $\phi$, $\sigma$ and $q$. We split the proof of Proposition~\ref{C:corrector} into three steps. In Step~1 we prove (a) and (b), i.e.\ the existence of $\phi$ and $\sigma$, and the moment bounds for $\phi$, which is a rather direct consequence of Theorem~\ref{T:decay}.  In Step~2 we establish a sensitivity estimate for the right-hand side of \eqref{eq:corrector-equation3}. In Step~3 we establish the growth bound for $\sigma$.
\medskip

\step 1 Proof of (a) and (b).

We first claim that there exists $\phi^0\in L^{2n}(\Omega)$ with $\mean[\phi^0]=0$ satisfying \eqref{eq:momentboundphi} such that
\begin{equation*}
  D^*\bomega(0) \big(D\phi^0+\xi\big) \; = \; 0.
\end{equation*}
For the argument set $F:=-\bomega(0)\xi$, $u(t):=P_tD^*F$, and note that
\begin{equation*}
  \mean\bigg[\Big(\sum_{e\in E_d}\big|\partial_e F\big|\Big)^{\! 8(n+1)}\bigg]^\frac{1}{8(n+1)}\; =\; \mean\bigg[\Big(\sum_{i=1}^d \big|\partial_{\{0,e_i\}} F\big|\Big)^{\! 8(n+1)}\bigg]^\frac{1}{8(n+1)} \;\lesssim \; |\xi|=1.
\end{equation*}
Consequently, Theorem~\ref{T:decay} yields  $\mean\Big[\big|u(t)\big|^{2(n+1)}\Big]^\frac{1}{2(n+1)}\lesssim (t+1)^{-\gamma}$
for some $1<\gamma<\frac{d}{4}+\frac{1}{2}$. Since $\gamma>1$, we can define the sought random variable as Laplace transform $\phi^0:=\int_0^\infty u(t)\,dt$.\\

Next, we set $\phi(\omega,x):=\phi^0(\tau_x\omega)-\phi^0(\omega)$. By construction we have $\phi(\omega,0)=0$. Since $D_i$ is the discrete generator of the shift $\tau_{e_i}$, we deduce that
\begin{equation*}
  \nabla\phi(\omega,x)=D\phi(\tau_x\omega),\qquad \nabla^*\bomega(x)(\nabla\phi(\omega,x)+\xi)=\Big(D^*\bomega(0)(D\phi^0+\xi)\Big)(\tau_x\omega)=0,
\end{equation*}
and we conclude that $\phi$ satisfies all the claimed properties.\\

Next, we prove the existence of $\sigma$. To that end we first rewrite the right-hand side in \eqref{eq:corrector-equation3} in divergence form. For $k,\ell=1,\ldots d$ we introduce the random vector fields
\begin{eqnarray*}
  q^0(\omega)&:=&\bomega(0)(D\phi^0(\omega)+\xi)-\bomega_{\hom}\xi,\\
  Q_{k\ell}^0(\omega)&:=&\big(q^0(\tau_{\ell}\omega)\cdot e_k\big) \, e_\ell- \big(q^0(\tau_{e_k}\omega)\cdot e_\ell\big) \, e_k,
\end{eqnarray*}
and denote by $Q_{k\ell}(\omega,x):=Q^0_{k\ell}(\tau_x\omega)$ the stationary extension. Note that by construction we have $q(\omega,x)=q^0(\tau_x\omega)$ and $(Sq)_{k\ell}=\nabla^*Q_{k\ell}$. From the moment bound \eqref{eq:momentboundphi} and moment condition on the conductances we deduce that $q^0$ and $Q^0$ have finite second moments. Therefore, for any $T\geq 1$ the regularized equation $(\frac{1}{T}+D^*D)\sigma_{T,k\ell}^0=D^*Q_{k\ell}^0$ admits a unique solution in $L^2(\Omega)$ satisfying the a priori estimate
\begin{equation*}
  \frac{1}{T}\mean\Big[\big|\sigma_{T,k\ell}^0\big|^2\Big]+\frac{1}{2}\mean\Big[ \big|\nabla\sigma_{T,k\ell}^0|^2\Big] \;\leq \; \frac{1}{2}\mean\Big[\big|Q^0_{k\ell}\big|^2\Big] \; \lesssim \; 1.
\end{equation*}
Since the estimate on $D\sigma^0_T$ is uniform in $T\geq 1$, we deduce that (up to a subsequence) $D\sigma^0_{T,k\ell}$ weakly converges for $T\uparrow\infty$ to some random variable $W_{k\ell}=(W_{k\ell,1},\ldots,W_{k\ell,d})$ with $\mean[|W_{k\ell}|^2]\leq \frac{1}{2}\mean\Big[\big|Q^0_{k\ell}\big|^2\Big]$ satisfying
\begin{align}\label{eq:pfcorr:2}
  D^*W_{k\ell} \; = \; D^*Q^0_{k\ell}.
\end{align}
Note that $\mean[W_{k\ell}]=0$, since $\mean[D\sigma^0_T]=0$. Moreover, $W_{k\ell}$ is curl-free in the sense that $D_iW_{k\ell,j}=D_jW_{k\ell,i}$. Hence, there exists a random field $\sigma_{T,k\ell}(\omega,x)$ with $\sigma_{T,k\ell}(\omega,0)=0$ and $\nabla\sigma_{T,k\ell}(\omega,x)=W_{k\ell}(\tau_x\omega)$, and \eqref{eq:pfcorr:2} turns into $\nabla^*\nabla\sigma_{T,k\ell}=\nabla^*Q_{k\ell}$, which is \eqref{eq:corrector-equation3} due to the definition of $Q$. Moreover, $\sigma_{k\ell}$ is skew-symmetric in $k,\ell$, since so is $(Sq)_{k\ell}$. Moreover, \eqref{eq:corrector-equation2} follows from the identity $\nabla^*\nabla(\nabla^*_\ell\sigma_{k\ell}-q_k)=0$ (see e.g.\ \cite[Proof of Lemma 9, Step 2]{BMN17}).
\medskip

\step 2 Sensitivity estimate.

Let $f:\bbZ^d\to\bbR$ be compactly supported and  $Q$ be defined as in Step~1. Consider exponents $s,r$ such that
\begin{equation}\label{eq:pfcorr:3}
s>\frac{d}{d-1},\qquad r\geq 1,\qquad 1+\frac{1}{2}=\frac{1}{s}+\frac{1}{r}.
\end{equation}
Then for any $p>1$ satisfying
\begin{equation}\label{eq:pfcorr5a}
  s \, \frac{d-1}{d}-\frac{p-1}{p} \, \frac{1}{d} \; > \;1
\end{equation}
(i.e. for $0<p-1\ll1 $), we have
\begin{equation*}
  I\; := \; \mean\Bigg[\bigg(\sum_{e\in E_d}\Big(\sum_{y\in\bbZ^d} \big|\nabla f(y)\big| \, \big|\partial_{e}Q_{k\ell}(y)\big|\Big)^{\!2}\bigg)^{\!p}\Bigg]^{\frac{1}{2 p}} \; \lesssim \;  \big\|\nabla f\big\|_{\ell^{r}(\bbZ^d)}.
\end{equation*}

This can be seen as follows. By the triangle inequality (w.r.t.\ $\| \cdot \|_{L^p(\Omega, \prob)}$), by expanding the square, another application of the triangle inequality, and the Cauchy Schwarz inequality, 
  \begin{eqnarray*}
    I^2  &\leq&\sum_{e\in E_d}\mean\bigg[\Big(\sum_{y\in\bbZ^d} \big|\nabla f(y)\big| \, \big|\partial_{e}Q_{k\ell}(y)\big|\Big)^{\!2p}\bigg]^{\frac{1}{p}}\\
      &=&
      \sum_{e\in E_d}\mean\bigg[\Big(\sum_{y,y'\in\bbZ^d} \big|\nabla f(y)\big|\, \big|\nabla f(y')\big| \, \big|\partial_{e}Q_{k\ell}(y)\big| \big|\partial_{e}Q_{k\ell}(y')\big|\Big)^{\!p}\bigg]^{\frac{1}{p}}\\
      &\leq&
      \sum_{e\in E_d}\sum_{y,y'\in\bbZ^d} \big|\nabla f(y)\big| \, \big|\nabla f(y')\big| \, \mean\bigg[\Big( \big|\partial_{e}Q_{k\ell}(y)\big| \, \big|\partial_{e}Q_{k\ell}(y')\big|\Big)^{\!p}\bigg]^{\frac{1}{p}}\\
      &\leq&
      \sum_{e\in E_d}\sum_{y,y'\in\bbZ^d} \big|\nabla f(y)\big| \, \big|\nabla f(y')\big| \, \mean\Big[\big|\partial_{e}Q_{k\ell}(y)\big|^{2p}\Big]^{\frac{1}{2p}} \, \mean\Big[ \big|\partial_{e}Q_{k\ell}(y')\big|^{2p}\Big]^{\frac{1}{2p}}\\
      &=&
      \sum_{e\in E_d}\bigg(\sum_{y\in\bbZ^d} \big|\nabla f(y)\big| \, \mean\Big[ \big|\partial_{e}Q_{k\ell}(y)\big|^{2p}\Big]^{\frac{1}{2p}}\bigg)^{\!2}.
  \end{eqnarray*}
  For the following discussion it is convenient to set
  \begin{equation*}
    g(z):=\sum_{e=\{z,z+e_i\}\atop i=1,\ldots,d}\mean\Big[ \big|\partial_{e}Q_{k\ell}^0\big|^{2p}\Big]^{\frac{1}{2p}}.
  \end{equation*}
  Since $Q_{k\ell}(\omega,y)=Q^0_{k\ell}(\tau_y\omega)$ we infer that $    \partial_e Q_{k\ell}(\omega,y)=(\partial_{e-y}Q^0)(\tau_y\omega)$. Since $\prob$ is stationary, we obtain $\mean\big[|\partial_{e}Q_{k\ell}(y)|^{2p}\big]^{\frac{1}{2p}}=\mean\big[|\partial_{e-y}Q_{k\ell}^0|^{2p}\big]^{\frac{1}{2p}}\leq g(\ue-y)$, and thus
  \begin{equation*}
    I \; \leq \; \bigg(\sum_{z\in\bbZ^d}\Big(\sum_{y\in\bbZ^d} \big|\nabla f(y)\big| \,  g(z-y)\Big)^{\!2}\bigg)^{\frac{1}{2}}.
  \end{equation*}
  The inner sum is a convolution. Thanks to \eqref{eq:pfcorr:3} Young's estimate for convolutions yields
  \begin{equation*}
    I \; \leq \; \|\nabla f\|_{\ell^{r}(\bbZ^d)} \, \|g\|_{\ell^{s}(\bbZ^d)},
  \end{equation*}
  and it remains to show that the norm of $g$ is finite. It suffices to show that for some $\gamma>0$ with $\gamma>\frac{d}{s}$ we have
  \begin{equation}\label{eq:pfcorr3b}
    \mean\Big[ \big|\partial_{e}Q_{k\ell}^0\big|^{2p}\Big]^{\frac{1}{2p}} \; \lesssim \;  \big(|\ue|+1\big)^{-\gamma}, \qquad  \forall e\in E_d.
  \end{equation}
  We first note that
  \begin{equation}\label{eq:pfcorr4}
    \big|\partial_e D\phi^0(\omega)\big| \; \leq \; \big|\nabla\nabla G^\omega(0,\ue)\big| \, \big|D\phi^0(\tau_{\ue} \omega)+\xi\big|.
  \end{equation}
  Indeed, this follows from  applying $\partial_e$ to \eqref{eq:corrector-equation1}, which yields
  \begin{equation}\label{eq:pfcorr5}
    \nabla^*\omega\nabla\partial_e\phi \; = \; \nabla^*\big(\partial_e\bomega(\cdot)\big) \, \big(\nabla\phi+\xi\big).
  \end{equation}
  Since $|\partial_e\bomega(\cdot)|\leq\delta(\cdot-\ue)$, and $\partial_e D\phi^0(\omega)=\nabla\partial_e\phi(\omega,0)$, we obtain \eqref{eq:pfcorr4} by the Green's function representation for \eqref{eq:pfcorr5}. Hence, applying $\partial_e$ to $Q^0$ gives
  \begin{equation*}
    \partial_e Q^0(\omega) \; = \; \delta(\ue)\big(D\phi^0(\omega)+\xi\big)-\bomega(0) \, \partial_e D\phi^0(\omega),
  \end{equation*}
  and thus
  \begin{equation*}
    \big|\partial_e Q^0(\omega)\big| \; \leq \;  \delta(\ue) \, \big(|D\phi^0|+1\big) + |\bomega(0)| \, \big|\nabla\nabla G(\omega,0,\ue)\big| \, \big(|D\phi^0(\tau_{\ue}\omega)|+1\big).
  \end{equation*}
  Since we may assume that high moments of $D\phi^0$ and $\bomega(0)$ exist, Corollary~\ref{L:AG} yields for any $p'>p$ and any $\e>0$,
  \begin{equation*}
    \mean\Big[\big|\partial_e Q^0\big|^{2p}\Big]^{\frac{1}{2p}} \; \lesssim \; \mean\Big[\big|\nabla\nabla G(0,\ue)\big|^{2p'}\Big]^{\frac{1}{2p'}} \; \lesssim \; \big(|\ue|+1\big)^{-\gamma},\qquad \gamma:=d-2\frac{2p'-1}{2p'}-\e,
  \end{equation*}
  provided the conductances satisfy sufficiently strong moment conditions with integrability exponents that depend on $p'$ and $\e$. Note that for $p'\downarrow p$ and $\e\downarrow 0$, we have $\gamma\uparrow \gamma_p:=d-\frac{2p-1}{p}=(d-1)-\frac{p-1}{p}$. Thanks to \eqref{eq:pfcorr5a} we have $\gamma_p\frac{s}{d}>1$, and thus we obtain $\gamma>\frac{d}{s}$ by choosing $p'$ and $\e$ sufficiently close to $p$ and $0$. This completes the argument for \eqref{eq:pfcorr3b}.
\medskip

\step 3 Sublinear estimate for $\sigma$.
\smallskip

We basically follow arguments in \cite{GNOest}, where a similar statement is obtained for uniformly elliptic, continuous systems, and \cite{Nlecture}, where the argument is carried out for the corrector $\phi$ in the uniformly elliptic, discrete setting in dimension $d=2$. The argument is split into three substeps.
\smallskip

\substep{3.1} For $L\geq 1$ consider
\begin{equation*}
  v_L(\omega):=\sigma(\omega,0)-\frac{1}{\# B(L)}\sum_{y\in B(L)}\sigma(\omega,y).
\end{equation*}
Then for any exponents $r$ and $p$ satisfying \eqref{eq:pfcorr:3} and \eqref{eq:pfcorr5a}, we have
\begin{align*}
  \mean\Big[ \big|v_L\big|^{2p}\Big]^{\frac{1}{2p}} \; \lesssim \;     \begin{cases}
    L^{\frac{d}{r}+(1-d)}& \text{if $r<\frac{d}{d-1}$,}\\
    (\log L)^{\frac{1}{r}}& \text{if $r=\frac{d}{d-1}$,}\\
    1& \text{if $r>\frac{d}{d-1}$.}
  \end{cases}
\end{align*}
For the argument let $f:\bbZ^d\to\bbR$ denote the unique decaying solution to
\begin{equation*}
  \nabla^*\nabla f=h,\qquad \text{where }h:=\delta_0 -\frac{1}{\# B(L)}\indicator_{B(L)}.
\end{equation*}
By representing $\nabla f$ with help of the discrete Green's function for $\nabla^*\nabla$ we find that
\begin{equation}\label{eq:pfcorr:7a}
  \big|\nabla f(y)\big| \; \lesssim \;  \big(L\wedge|y|\big) \, \big(|y|+1\big)^{-d}.
\end{equation}
Since $\sigma(\omega,\cdot)$ grows sublinearly $\prob$-a.s., we deduce that
\begin{equation}\label{eq:pfcorr:8}
  v_L(\omega) \; = \; \sum_{y\in\bbZ^d}\sigma(\omega,y) \, h(y) \; = \;\sum_{y\in\bbZ^d}\nabla\sigma(\omega,y)\cdot\nabla f(y).
\end{equation}
In particular, we find that $\mean[v_L]=0$, since $\mean[\nabla\sigma]=0$. Thus, the $p$-version of the Spectral Gap estimate, \eqref{eq:sg_p}, yields
\begin{equation}\label{eq:pfcorr:10}
  \mean\Big[ \big|v_L\big|^{2p}\Big]^{\frac{1}{2p}}\lesssim \mean\Big[\big(\sum_{e\in E_d}\big|\partial_e v_L\big|^2\big)^p\Big]^{\frac{1}{2p}}.
\end{equation}
Note that $\partial_e v_L=\sum_{y\in\bbZ^d}\nabla\partial_e\sigma(y)\cdot \nabla f(y)=\sum_{y\in\bbZ^d}\partial_e Q(y)\cdot\nabla f(y)$, where we used that
\begin{equation*}
  \nabla^*\nabla\partial_e\sigma=\nabla^*\partial_e Q.
\end{equation*}
Hence, we can estimate the right-hand side of \eqref{eq:pfcorr:10} by appealing to Step 2 and \eqref{eq:pfcorr:7a}. This completes the argument.
\smallskip

\substep{3.2} For $x\in\bbZ^d$ and $L\geq 1$ consider
\begin{equation*}
  v'_L(\omega):=\frac{1}{\# B(L)}\sum_{y\in B(L)}\big(\sigma(\omega,x+y)-\sigma(\omega,y)\big).
\end{equation*}
Then for any $r$ and $p$ satisfying \eqref{eq:pfcorr:3} and \eqref{eq:pfcorr5a}, we have
\begin{equation*}
  \mean\Big[\big|v'_L\big|^{2p}\Big]^{\frac{1}{2p}} \; \lesssim \; |x|\, L^{\frac{d}{r}-d} \, \log L.
\end{equation*}
For the argument let $f:\bbZ^d\to\bbR$ denote the unique decaying solution to
\begin{equation*}
  \nabla^*\nabla f\; = \; h,\qquad \text{where }h:=\frac{1}{\# B(L)} \Big(\indicator_{B(L)}(\cdot-x)-\indicator_{B(L)}(\cdot)\Big).
\end{equation*}
By representing $\nabla f$ with help of the discrete Green's function for $\nabla^*\nabla$ we find that
\begin{equation}\label{eq:pfcorr:7}
  \big|\nabla f(y)\big| \; \lesssim \; |x| \, \big(L\vee|y|\big)^{-d} \, \log L,
\end{equation}
and, moreover,
\begin{align*}
  v'_L(\omega)\; = \;\sum_{y\in\bbZ^d}\sigma(\omega,y) \, h(y)\; = \;\sum_{y\in\bbZ^d}\nabla\sigma(\omega,y)\cdot\nabla f(y).
\end{align*}
Hence, arguing as in Substep~{3.2} we see that the claim follows from Step~2.
\smallskip

\substep{3.3} Let $L:=|x|+1$. Then we have
\begin{eqnarray*}
  \sigma(\omega,x)&=&\sigma(\omega,x)-\sigma(\omega,0)\\
  &=&\sigma(\omega,x)-\frac{1}{\#B(L)}\sum_{y\in B(L)}\sigma(\omega,x+y)+
           \frac{1}{\#B(L)}\sum_{y\in B(L)}\big(\sigma(\omega,x+y)-\sigma(\omega,y)\big)\\
           &&+\frac{1}{\#B(L)}\sum_{y\in B(L)}\sigma(\omega,y) -\sigma(\omega,0)\\
  &=&v_L(\tau_x\omega)+v'_L(\omega)-v_L(\omega),
\end{eqnarray*}
where the last identity holds due to the identities of the previous steps, and by stationarity of $\nabla\sigma$ in combination with identity \eqref{eq:pfcorr:8}. Hence, the triangle inequality, $|x|\leq L$, and the estimates of Substep 3.1 und Substep 3.2 yield for any exponents $r$ and $p$ satisfying \eqref{eq:pfcorr:3} and \eqref{eq:pfcorr5} the estimate
\begin{align*}
  \mean\Big[ \big|\sigma(x)\big|^{2p}\Big]^{\frac{1}{2p}}
  &\lesssim \, L^{\frac{d}{r}-d+1}\log L + \begin{cases}
    L^{\frac{d}{r}+(1-d)}& \text{if $r<\frac{d}{d-1}$},\\
    \big(\log L\big)^{\frac{1}{r}}& \text{if $r=\frac{d}{d-1}$},\\
    1& \text{if $r>\frac{d}{d-1}$}.
  \end{cases}
\end{align*}
In dimensions $3\leq d\leq 4$ any exponent $1\leq r<\frac{2d}{d+2}\leq \frac{d}{d-1}$ is admissible. Since the upper bound $r\uparrow \frac{2d}{d+2}$ implies  $\frac{d}{r}+(1-d)\downarrow 2-\frac{d}{2}$, the claimed statement follows. On the other hand, in dimension $d\geq 5$, we might choose any exponent $\frac{d}{d-1}<r<\frac{2d}{d+2}$, which completes the argument.
\qed

\section{Variance decay for the  carr\'e du champ: Proof of Proposition~\ref{P:decay}}\label{S:P:decay}
A key ingredient in the proof of Proposition~\ref{P:decay} is the following lemma, which for $d\geq 3$ yields a representation of $g_\xi$ in divergence form.

\begin{lemma}\label{L:QV}
  Consider the situation of Proposition~\ref{C:corrector}. Let $\xi\in\bbR^d$ be fixed and let $(\phi,\sigma)$ denote the associated extended corrector, i.e.\ $(\phi,\sigma):=\sum_{i=1}^d\xi_i \,(\phi_i,\sigma_i)$ with $(\phi_i,\sigma_i)$ as in Proposition~\ref{C:corrector} (a). Consider
  \begin{equation*}
    g(\omega):=\sum_{y\in\bbZ^d}\omega(0,y)\psi(\omega,y)^2\qquad\text{where } \psi(\omega,y):=\xi\cdot y+\phi(\omega,y)-\phi(\omega,0).
  \end{equation*}
  Then,
  \begin{eqnarray*}
    g(\omega)-\mean[g]\;= \; g(\omega)-2\xi\cdot\bomega_{\hom}\xi\; =\;\nabla^*H(\omega,0),
  \end{eqnarray*}
  where $H=(H_{1},\ldots, H_{d})$  is defined by
  \begin{align*}
    H_i(\omega,x)\;\ldef \; & \omega(x,x+e_i) \, \big(\xi_i+\nabla_i\phi(\omega,x)\big)^2 \\
    & +2 \, \Big(\sigma(\omega,x)^t\xi+\phi(\omega,x+e_i) \,\bomega(x)\, \big(\xi+\nabla\phi(\omega,x)\big)\Big)\cdot e_i.
  \end{align*}
  %
\end{lemma}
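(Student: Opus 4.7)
The plan is to express $g(\omega) - 2\xi\cdot\bomega_{\hom}\xi$ as a discrete divergence $\nabla^* H(\omega, 0)$ via a purely algebraic decomposition into three pieces, each separately of divergence form, and then read off $H$ as the total flux. Using $\phi(\omega, 0) = 0$ and $\phi(\omega, -e_i) - \phi(\omega, 0) = -\nabla_i\phi(\omega, -e_i)$, the carré du champ rewrites as
\begin{equation*}
g(\omega) \;=\; \sum_{i=1}^d \bigl[\tilde\omega_i(0) + \tilde\omega_i(-e_i)\bigr], \qquad \tilde\omega_i(x) \ldef \omega(x,x+e_i)\,\bigl(\xi_i + \nabla_i\phi(x)\bigr)^2.
\end{equation*}
The antisymmetric combination $\sum_i[\tilde\omega_i(-e_i) - \tilde\omega_i(0)]$ is, by definition of the discrete divergence, equal to $\nabla^*\tilde\omega(0)$, so
\begin{equation*}
g(\omega) \;=\; \nabla^*\tilde\omega(0) \;+\; 2\,|\xi + \nabla\phi(0)|_{\bomega(0)}^2.
\end{equation*}
Taking expectation and using stationarity of $\nabla\phi$ together with the corrector orthogonality (the weak form of \eqref{eq:corrector-equation1} tested against $\phi$) reduces the second term to $2\xi\cdot\bomega_{\hom}\xi$ via \eqref{eq:def_omhom}; this already establishes the first equality $\mean[g] = 2\xi\cdot\bomega_{\hom}\xi$.

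It remains to exhibit the deterministic residual $2|\xi + \nabla\phi(0)|_{\bomega(0)}^2 - 2\xi\cdot\bomega_{\hom}\xi$ as a divergence at the origin. Setting $h \ldef \bomega(\xi+\nabla\phi)$ and $q \ldef h - \bomega_{\hom}\xi$, a direct expansion of the square yields
\begin{equation*}
2\,|\xi+\nabla\phi(0)|_{\bomega(0)}^2 - 2\,\xi\cdot\bomega_{\hom}\xi \;=\; 2\,\xi\cdot q(0) \;+\; 2\,\nabla\phi(0)\cdot h(0).
\end{equation*}
For the first summand I would invoke the flux equation \eqref{eq:corrector-equation2}, $\nabla^*\sigma = q$ (with $\sigma = \sum_i \xi_i\sigma_i$), together with the skew-symmetry $\sigma_{k\ell} = -\sigma_{\ell k}$ and the anchoring $\sigma(0)=0$; an index swap then identifies $\xi\cdot\nabla^*\sigma(0) = \nabla^*(\sigma^t\xi)(0)$. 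For the second summand I would rely on the discrete product rule
\begin{equation*}
\nabla^*\bigl(\phi(\cdot + e_\cdot)\,h\bigr)(x) \;=\; \phi(x)\,\nabla^*h(x) \;-\; \nabla\phi(x)\cdot h(x),
\end{equation*}
which one verifies directly from $\phi(x+e_i) = \phi(x) + \nabla_i\phi(x)$; combined with the corrector equation $\nabla^*h = 0$ it delivers $\nabla\phi(0)\cdot h(0) = -\nabla^*(\phi(\cdot+e_\cdot)h)(0)$. Summing the three fluxes one obtains $H$ in the stated form.

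The argument is entirely algebraic: no analytic estimate or stochastic input is needed. The only ingredients are the defining equations \eqref{eq:corrector-equation1}--\eqref{eq:corrector-equation2} of the extended corrector, the skew-symmetry of $\sigma_i$, and the anchoring $\phi(\omega,0) = \sigma(\omega,0) = 0$. The main point demanding care is the sign and index bookkeeping in the two discrete integration-by-parts identities---specifically, checking that the skew-symmetry of $\sigma_i$ correctly converts $\xi\cdot\nabla^*\sigma$ (a divergence acting on the second matrix index of $\sigma$) into the scalar divergence $\nabla^*(\sigma^t\xi)$, and that the evaluation point $x=0$ is consistently exploited through $\phi(0) = \sigma(0) = 0$. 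No further obstacle arises.
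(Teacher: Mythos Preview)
Your approach is the paper's approach: the same split of $g$ into the divergence $\nabla^*\tilde\omega(0)$ plus twice the Dirichlet form, and then the same handling of $\xi\cdot q$ via the flux equation for $\sigma$ and of $\nabla\phi\cdot h$ via a discrete product rule with $\nabla^*h=0$.

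One bookkeeping remark, since you flag signs as the delicate point: your product-rule identity $\nabla^*\bigl(\phi(\cdot+e_\cdot)h\bigr)=\phi\,\nabla^*h-\nabla\phi\cdot h$ is correct, and with $\nabla^*h=0$ it gives $\nabla\phi(0)\cdot h(0)=-\nabla^*\bigl(\phi(\cdot+e_\cdot)h\bigr)(0)$; summing the three fluxes therefore produces the $\phi(x+e_i)$-term in $H_i$ with a \emph{minus} sign rather than the plus sign in the stated $H$. The paper's own proof carries the same sign slip in its product rule; it is harmless, since only $|H|$ enters the subsequent estimates. A smaller point: the identity $\xi\cdot\nabla^*\sigma=\nabla^*(\sigma^t\xi)$ is pure linear algebra (interchange of the $k$- and $\ell$-sums) and requires neither the skew-symmetry of $\sigma$ nor the anchoring $\sigma(0)=0$.
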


\begin{proof}
  Define 
  \begin{equation*}
    H'_i(\omega,x):=\omega(x,x+e_i)\big(\xi_i+\nabla_i\phi(\omega,x))^2\,\qquad
    H''_i(\omega,x):=\phi(\omega,x+e_i)\big(e_i\cdot\bomega(x)(\xi+\nabla\phi(\omega,x))\big),
  \end{equation*}
  and note that $H_i=H_i'+2(\sigma^t\xi)\cdot e_i+2H_i''$. 
  We have $\psi(\omega,e_i)=\xi_i+\nabla_i\phi(\omega,0)$ and $\psi(\omega,-e_i)=-\xi_i-\nabla_i\phi(\omega,-e_i)=-\xi_i-\nabla_i\phi(\tau_{-e_i}\omega,0)$, thanks to stationarity of $\nabla\phi$ (see Proposition~\ref{C:corrector} (a.3)).  Hence,
  \begin{align}\label{L:QV:eq000} \nonumber
    g(\omega) &= \;\sum_{i=1}^d\omega(0,e_i)\, \big(\xi_i+\nabla_i\phi(\omega,0)\big)^2+\sum_{i=1}^d\omega(-e_i,0)\,
    \big(-\xi_i-\nabla_i\phi(\omega,-e_i)\big)^2\\\nonumber
   & = \;\sum_{i=1}^dH_i'(\omega,0)+\sum_{i=1}^dH_i'(\tau_{-e_i}\omega,0)\;
    =\;2 \, \Big(\sum_{i=1}^dH_i'(\omega,0)\Big)+D^*H'(\omega,0)\\
    & = \;2 \, \Big(\sum_{i=1}^dH_i'(\omega,0)\Big)+\nabla^*H'(\omega,0),
  \end{align}
  where the last identity holds, since $H'$ is stationary in the sense that $H'(\tau_x\omega,y)=H'(\omega,y+x)$.
  With $q(\omega,x):=\bomega(x)\, \big(\xi+\nabla\phi(\omega,x)\big)-\bomega_{\hom}\xi$, we can rewrite the first term on the right-hand side  as
  \begin{align} \label{eq:Hprime}
    \sum_{i=1}^dH_i'(\omega,0)&=\; \big(\xi+\nabla\phi(\omega,0)\big)\cdot\bomega(0)\, \big(\xi+\nabla\phi(\omega,0)\big) \nonumber\\
    &= \; \xi\cdot q(\omega,0)+\nabla\phi(\omega,0)\cdot \bomega(0)\, \big(\xi+\nabla\phi(\omega,0)\big)+\xi\cdot\bomega_{\hom}\xi.
  \end{align}
  In view of Proposition~\ref{C:corrector}, the first term takes the form
  \begin{equation}\label{rep2}
    \xi\cdot q(\omega,0)=\nabla^*(\sigma^t\xi)(\omega,0).
  \end{equation}
  For the second term on the right-hand side of \eqref{eq:Hprime} we use the general discrete product rule
  \begin{equation*}
    \nabla \phi\cdot F=\nabla^*[\phi,F]-\phi \,(\nabla^*F),\qquad [\phi,F]_i:=\phi(\cdot+e_i) \, F_i,
  \end{equation*}
  which we apply with $F(x)=\bomega(x) \, \big(\xi+\nabla\phi(\omega,x)\big)$. By the corrector equation we have $\nabla^*F=0$ and therefore 
  \begin{eqnarray*}
    \nabla\phi(\omega,\cdot)\cdot \bomega(\cdot)\, \big(\xi+\nabla\phi(\omega,\cdot)\big)\;=\; \nabla^*H''(\omega,\cdot).
  \end{eqnarray*}
  Now the claimed representation follows from \eqref{L:QV:eq000}, \eqref{eq:Hprime} and \eqref{rep2}, and the fact that the mean of the right-hand side in \eqref{eq:Hprime} is $\xi\cdot\bomega_{\hom}\xi$.
\end{proof}

\begin{proof}[Proof of Proposition~\ref{P:decay}]
  First we recall that
  \begin{align*}
    (\Gamma^\om f)(x)& \ldef \left[\cL^\om f^2 - 2 f \cL^\om f \right](x)
    =\sum_{y\in \bbZ^d} \om(x,y) \left( f(y)-f(x)\right)^2,
  \end{align*}
  which holds for any $f:\bbZ^d\to\bbR$ as can be seen by a direct calculation. In order to recover $g_\xi(\omega)$, we need to consider $f(x)=\psi_\xi(\omega,x)$ and evaluate at $x=0$.   By the definition of $\psi_\xi$ we have $\psi_\xi(\omega,0)=0$, and thus Lemma~\ref{L:QV} yields
  \begin{align}\label{L:QV:eq000} \nonumber
    g_\xi(\omega)&= \; \Gamma^\omega(\psi_\xi(\omega,\cdot))(0) \; = \; \sum_{|y|=1}\omega(0,y) \, \psi_\xi(\omega,y)^2 \\
    &=\mean[g_\xi]+\nabla^*H(\omega,0),
    \nonumber
  \end{align}
  where $H$ is defined as in Lemma~\ref{L:QV}. Note that $H(y):=H(\om,y)=H(\tau_y \om, 0)$. Thus, by the definition of $P_t$, an integration by parts, H\"older's inequality, and Proposition~\ref{P:HK}, we get for all $\alpha>d$ and $0<\e<1$, 
  \begin{eqnarray*}
    I&:=&\mean\bigg[\Big(P_t \big(g_\xi-\mean[g_\xi]\big)\Big)^{\!2}\bigg]^{\frac12}=    \mean\bigg[\Big(\sum_{y\in\bbZ^d} p^\om(t,0,y) \,\nabla^*H(y)\Big)^{\!2}\bigg]^{\frac12}\\
    &=&\mean\bigg[\Big(\sum_{y\in\bbZ^d}\nabla p(t,y)\cdot H(y)\Big)^{\!2}\bigg]^{\frac12}\\
    &\leq&\mean\bigg[\Big(\sum_{y\in\bbZ^d} \big|\nabla p(t,y)\big|^2 \, m(t,y)^{\alpha}\Big) \, \Big(\sum_{y\in\bbZ^d} \big|H(y)\big|^2 \, m(t,y)^{-\alpha}\Big)\bigg]^{\frac12}\\
    &\leq&(t+1)^{-(\frac{d}{4}+\frac{1}{2})+\e} \, \mean\bigg[\mathcal Z_t^2 \, \Big(\sum_{y\in\bbZ^d} \big|H(y)\big|^2 m(t,y)^{-\alpha}\Big)\bigg]^{\frac12}\\
    &=&(t+1)^{-(\frac{d}{4}+\frac{1}{2})+\e}\, \bigg(\sum_{y\in\bbZ^d}m(t,y)^{-\alpha}\mean\Big[\mathcal Z_t^2 \, \big|H(y)\big|^2\Big]\bigg)^{\frac12}.
  \end{eqnarray*}
  From the definition of $H$, the moment bounds of Proposition~\ref{C:corrector}, and the property that high moments of $\mathcal Z_t$ are bounded uniformly in $t$ (see Proposition~\ref{P:HK}), we deduce that for any $\theta>\frac{1}{2}$ in dimension $d=3$ and any $\theta>0$ in dimension $d\geq 4$, we have
  \begin{equation*}
    \sup_{t\geq 0}\mean\Big[\mathcal Z_t^2\, \big|H(y)\big|^2\Big] \; \lesssim \; (|y|+1)^{2\theta}\leq (t+1)^{\theta}m(t,y)^{2\theta},
  \end{equation*}
  and thus
  \begin{equation*}
    I\lesssim (t+1)^{-(\frac{d}{4}+\frac{1}{2})+\e+\frac{\theta}{2}}\, \bigg(\sum_{y\in\bbZ^d}m(t,y)^{-\alpha+2\theta}\bigg)^{\frac12}.
  \end{equation*}
  Since $\sum_{y\in\bbZ^d}m(t,y)^{-\alpha+2\theta} \lesssim (t+1)^{\frac{d}{2}}$ (whenever $\alpha-2\theta>\frac{d}{2}$), we conclude that
  \begin{equation*}
    I \; \lesssim \; (t+1)^{-\frac{1}{2}+\e+\frac{\theta}{2}}.
  \end{equation*}
  Note that in dimension $d\geq 4$ (resp. $d=3$) we might choose $\theta$ arbitrarily close $0$ (resp. $\frac{1}{2}$), while we can choose $\e>0$ as small as we wish. This completes the proof.
\end{proof}

\section{Berry-Esseen Theorem} \label{S:be}
In this section we prove Theorem~\ref{thm:main}. As mentioned earlier we will show a Berry-Esseen  theorem for the martingale part and afterwards derive that the corrector converges sufficiently fast to zero along the path of the random walk. We start by setting-up the decomposition of $X$.  
Recall the definition of the corrector in Proposition~\ref{C:corrector} and of the harmonic coordinates $\Psi: \Omega \times \bbZ^d \rightarrow \bbR^d$ in \eqref{eq:harm_coord} above. Let $\chi:\Omega \times \bbZ^d \rightarrow \bbR^d$ be defined by 
\begin{align*}
\chi(\om,x)\ldef \Psi(\om,x) -x.
\end{align*}

\begin{remark} \label{rem:cocycle}
The harmonic coordinates satisfy the \emph{cocycle property}, that is for $\prob$-a.e.\ $\om$,
\begin{align*}
\Psi(\om,y)-\Psi(\om,x)=\Psi(\tau_x\om,y-x), \qquad \forall x,y\in \bbZ^d,
\end{align*}
and a similar relation holds for $\chi$.
 Note that conventions about the sign of the corrector $\chi$ differ, compare for instance \cite{ADS15, Bi11} and \cite{SS04}.
\end{remark}

\begin{corro} \label{cor:qV}
For $\prob$-a.e.\ $\om$, the process
\begin{align*}
M_t \; \ldef \; \Psi(X_t), \qquad t\geq 0,
\end{align*}
is a $P_0^\om$-martingale and
\begin{align} \label{eq:decompX}
X_t\; = \; M_t- \chi(\om,X_t), \qquad t\geq 0.
\end{align}
Moreover, for every $\xi\in \bbR^d$, $\xi\cdot M$ is a $P^\om_0$-martingale with quadratic variation process given by
\begin{align} \label{eq:vM_qv}
\langle \xi\cdot M \rangle_t  \; = \;  \int_0^t  \sum_{y \in \bbZ^d} \om(X_s,y) \big(\psi_\xi(\om,y)- \psi_\xi(\om,X_s) \big)^2 \, ds 
  \; = \; 
 \int_0^t \Gamma^{\om} \big(\psi_\xi(\tau_{X_s}\om, \cdot)\big)(0) \, ds.
%
\end{align}
\end{corro}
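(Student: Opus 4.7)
The plan is to deduce everything from the corrector equation \eqref{eq:corrector-equation1} together with standard Markov-chain/Dynkin theory. The first observation is that each coordinate of $\Psi$ is $\cL^\om$-harmonic: since $\cL^\om = -\nabla^\ast\bomega\nabla$ and $\nabla\psi_i(\om,\cdot) = \nabla\phi_i(\om,\cdot) + e_i$, equation \eqref{eq:corrector-equation1} gives $\cL^\om\psi_i(\om,\cdot)\equiv 0$ on $\bbZ^d$. The decomposition \eqref{eq:decompX} is then just algebra, since by definition $M_t=\Psi(X_t)=X_t+\chi(\om,X_t)$.

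For the martingale property, Dynkin's formula for the pure-jump generator $\cL^\om$ tells us that for any $f\colon\bbZ^d\to\bbR$ of polynomial growth with $\cL^\om f\equiv 0$, the process $f(X_t)-f(X_0)$ is a local $P_0^\om$-martingale. I would apply this with $f=\psi_i$ (which has at most linear growth because $|\psi_i(\om,x)|\le|x|+|\phi_i(\om,x)|+|\phi_i(\om,0)|$) and then localize by the exit times $T_n\ldef\inf\{t\ge 0:|X_t|\ge n\}$ to obtain a bounded martingale $\psi_i(X_{t\wedge T_n})$. To pass to the limit $n\to\infty$ I would verify uniform integrability on any bounded time interval, combining (i) the stationary representation $\phi_i(\om,x)=\phi_i^0(\tau_x\om)-\phi_i^0(\om)$ with the moment bound \eqref{eq:momentboundphi}, and (ii) standard tail estimates for the VSRW under the moment condition $M(p,q)<\infty$ that yield $\Mean_0^\om[|X_{t\wedge T_n}|^r]$ bounded uniformly in $n$ for some $r>1$ and $\prob$-a.e.\ $\om$. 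The main technical obstacle sits precisely here: unlike the uniformly elliptic setting, one cannot appeal to a universal jump rate, so the integrability of $\psi_i(X_t)$ has to be handled through the (degenerate) heat-kernel and corrector moment bounds.

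For the quadratic variation, I would invoke the carr\'e du champ formula for a pure-jump Markov process: if $g\colon\bbZ^d\to\bbR$ is in the domain of $\cL^\om$ and $g^2$ is as well, then the quadratic variation of the local martingale $g(X_t)-g(X_0)-\int_0^t\cL^\om g(X_s)\,ds$ is
\begin{equation*}
\int_0^t \bigl[\cL^\om g^2 - 2g\,\cL^\om g\bigr](X_s)\,ds = \int_0^t \Gamma^\om(g)(X_s)\,ds.
\end{equation*}
Applied to $g=\psi_\xi(\om,\cdot)=\sum_i\xi_i\psi_i(\om,\cdot)$, which is harmonic by linearity, this gives
\begin{equation*}
\langle\xi\cdot M\rangle_t=\int_0^t \sum_{y\in\bbZ^d}\om(X_s,y)\bigl(\psi_\xi(\om,y)-\psi_\xi(\om,X_s)\bigr)^{\!2}\,ds,
\end{equation*}
which is the first equality in \eqref{eq:vM_qv}. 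The second equality follows from the cocycle property of Remark~\ref{rem:cocycle}: substituting $y=X_s+z$ and using $\psi_\xi(\om,X_s+z)-\psi_\xi(\om,X_s)=\psi_\xi(\tau_{X_s}\om,z)$ together with $\om(X_s,X_s+z)=(\tau_{X_s}\om)(0,z)$ rewrites the integrand as the carr\'e du champ of the shifted environment evaluated at the origin, i.e.\ $\Gamma^{\tau_{X_s}\om}(\psi_\xi(\tau_{X_s}\om,\cdot))(0)$, matching the displayed expression.

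Beyond the integrability question discussed above, the only non-routine point is making sure that the heuristic carr\'e du champ identity genuinely applies pathwise to the unbounded harmonic function $\psi_\xi$; I would again localize by $T_n$, apply It\^o's formula for pure-jump processes to the bounded function $\psi_\xi(\om,\cdot\wedge\cdot)$, and pass to the limit once the uniform integrability in Step~1 is established.
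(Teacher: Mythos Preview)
Your approach is essentially the same as the paper's: harmonicity of $\Psi$ from the corrector equation, the decomposition from the definition of $\chi$, the carr\'e du champ formula for the quadratic variation, and the cocycle property for the second equality in \eqref{eq:vM_qv}. The paper treats the passage from local martingale to true martingale as standard (implicitly deferring to the literature, e.g.\ \cite{ADS15}), whereas you spell out a localization/uniform-integrability scheme; this extra care is reasonable in the degenerate setting but not a different argument.
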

\begin{proof}
Clearly, $\nabla^* \om \nabla \Psi_i=0$ for every $i=1,\ldots d$, so $\Psi_i$ is $\mathcal{L}^\om$-harmonic. In particular, $M$ and hence also $\xi\cdot M$ are $P_0^\om$-martingales. The decomposition \eqref{eq:decompX} follows directly from the definition of $\chi$. 
To compute $\langle \xi\cdot M \rangle$, which is the unique predictable process 
such that $(\xi\cdot M)^2-\langle \xi\cdot M \rangle$ is a martingale, recall that
\begin{align*}
\langle \xi\cdot M \rangle_t \;=\; \int_0^t \Gamma^\om(\psi_\xi(\om, \cdot))(X_s) \, ds,
\end{align*}
where $\Gamma^\om$ still denotes the op\'erateur carr\'e 
du champ associated with  $\mathcal{L}^\om$ given by
\begin{align*}
 \Gamma^\om(f)(x) \; \ldef \; \left[\cL^\om f^2 - 2 f \cL^\om f \right](x)
\;= \;\sum_{y\in \bbZ^d} \om(x,y) \, \big( f(y)-f(x)\big)^2,
\end{align*}
and $\psi_\xi=\xi\cdot \Psi$ as defined in \eqref{eq:harmonic-coordinate}.
Finally, since $\psi_\xi(\om,0)=0$ we observe that for any $x\in \bbZ^d$,
\begin{align*}
\sum_{y \in \bbZ^d}\omega(x,y) \big(\psi_\xi(\om,y)- \psi_\xi(\om,x) \big)^2 \; = \; \sum_{y \in \bbZ^d} (\tau_x \om)(0,y) \, \psi_\xi(\tau_x\om,y)^2 
\; = \; \Gamma^{\om} \big(\psi_\xi(\tau_x\om, \cdot)\big)(0), 
\end{align*}
which completes the proof.
\end{proof}

\subsection{Quantitative CLT for the martingale part}
In this section we show the following Berry-Esseen theorem for the martingale part. 
\begin{prop} \label{prop:beM}
  Let $d \geq 3$ and suppose that Assumption \ref{ass:sg} holds. For any $\varepsilon>0$ there exist exponents $p,q\in [1,\infty)$ (only depending on $d$, $\rho$ and $\varepsilon$) such that under the moment condition $M(p,q)<\infty$ the following hold. 
 \begin{enumerate}
 \item[(i)] There exists a constant $c=c(d,\rho, \varepsilon, M(p,q))$ such that for all $t\geq 0$, 
    \begin{align*}
  \sup_{x \in \bbR} \Big|  \prob_0 \big[\xi \cdot M_t \leq \sigma_\xi x \sqrt{t} \ \big] - \Phi(x)\Big|  \; \leq \;
  \begin{cases}
   \; c \, t^{- \frac 1 {10} + \varepsilon} & \text{if $d=3$,} \\
    \; c \, t^{-\frac 1 5 + \varepsilon} & \text{if $d\geq 4$.}
  \end{cases}
  \end{align*}
\item [(ii)] There exists a random  $\cX=\cX(d,\rho, \varepsilon, M(p,q))$ in $L^1(\prob)$ such that  for $\prob$-a.e.\ $\om$, 
    \begin{align*}
  \int_0^\infty  \Big(  \sup_{x \in \bbR} \left|  P_0^\om\big[v \cdot M_t \leq \sigma_\xi x \sqrt{t} \ \big] - \Phi(x)\right| \Big)^5  (t+1)^{-\frac  1 2-\varepsilon} \, dt  \; & \leq \; \cX(\om) \; < \; \infty, \qquad \text{if $d=3$,}
  \intertext{and}
  \int_0^\infty  \Big(  \sup_{x \in \bbR} \left|  P_0^\om\big[v \cdot M_t \leq \sigma_\xi x \sqrt{t} \ \big] - \Phi(x)\right| \Big)^5  (t+1)^{-\varepsilon} \, dt  \; & \leq \; \cX(\om) \; < \; \infty, \qquad \text{if $d\geq 4$.}
  \end{align*}
 \end{enumerate}  
\end{prop}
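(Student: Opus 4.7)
The plan is to apply the general martingale Berry--Esseen theorem (Theorem~\ref{thm:be_hb} of \cite{Ha88}) at index $n=2$ to the $P_0^\om$-martingale $\xi\cdot M$. By Corollary~\ref{cor:qV} this martingale has quadratic variation $\langle \xi\cdot M\rangle_t=\int_0^t g_\xi(\tau_{X_s}\om)\,ds$ and, since Lemma~\ref{L:QV} gives $\mean[g_\xi]=2\xi\cdot\bomega_{\hom}\xi=\sigma_\xi^2$, Haeusler's inequality yields a bound of the form
\begin{equation*}
\sup_{x}\bigl|P_0^\om[\xi\cdot M_t\le \sigma_\xi x\sqrt t]-\Phi(x)\bigr| \;\lesssim\; \bigl(\cA_t^\om+\cB_t^\om\bigr)^{1/5},
\end{equation*}
where
\begin{equation*}
\cA_t^\om\,\ldef\,E_0^\om\Bigl[\Bigl|\tfrac{\langle\xi\cdot M\rangle_t}{\sigma_\xi^2 t}-1\Bigr|^{2}\Bigr], \qquad \cB_t^\om\,\ldef\,(\sigma_\xi^2 t)^{-2}\,E_0^\om\Bigl[\textstyle\sum_{s\le t}|\Delta(\xi\cdot M)_s|^4\Bigr].
\end{equation*}
The task then reduces to annealed estimates on $\cA_t^\om$ and $\cB_t^\om$.

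The jump term is the easier one. The cocycle property of $\Psi$ together with the stationary representation $\phi_\xi(\om,x)=\phi_\xi^0(\tau_x\om)-\phi_\xi^0(\om)$ from Proposition~\ref{C:corrector}(b) shows that $|\Delta(\xi\cdot M)_s|\le |\xi|+|D\phi_\xi^0(\tau_{X_{s-}}\om)|$. Rewriting $\sum_{s\le t}|\Delta M_s|^4$ via its compensator and using that $\prob$ is stationary for the environment process $(\tau_{X_t}\om)$, I would obtain
\begin{equation*}
\mean E_0^\om\bigl[\textstyle\sum_{s\le t}|\Delta(\xi\cdot M)_s|^4\bigr] \,=\, t\,\mean\Bigl[\textstyle\sum_{|y|=1}\om(0,y)\,\psi_\xi(\om,y)^4\Bigr],
\end{equation*}
and the right-hand side is finite by H\"older's inequality, \eqref{eq:momentboundphi} and sufficiently strong moments of $\om$. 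Hence $\mean[\cB_t^\om]\lesssim t^{-1}$.

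For the QV fluctuation, set $h\ldef g_\xi-\mean[g_\xi]$. Since $(\tau_{X_t}\om)$ is a Markov process with semigroup $P_t$ and reversible measure $\prob$, expanding the square, using the Markov property and the self-adjointness of $P_\tau$ on $L^2(\prob)$ produces
\begin{equation*}
\mean\bigl[\cA_t^\om\bigr] \;=\; \frac{2}{(\sigma_\xi^2 t)^2}\int_0^t\!\!\int_0^s \mean\bigl[(P_{(s-r)/2}h)^2\bigr]\,dr\,ds.
\end{equation*}
Proposition~\ref{P:decay}, applied with a small parameter $\varepsilon'>0$, gives $\mean[(P_\tau h)^2]\lesssim(1+\tau)^{-2\alpha}$ with $\alpha=\tfrac14-\varepsilon'$ for $d=3$ and $\alpha=\tfrac12-\varepsilon'$ for $d\ge4$; the double integral then yields
\begin{equation*}
\mean[\cA_t^\om] \;\lesssim\;
\begin{cases}
(t+1)^{-1/2+2\varepsilon'}, & d=3,\\
(t+1)^{-1+2\varepsilon'}, & d\ge 4.
\end{cases}
\end{equation*}
Substituting the two bounds into Haeusler's inequality and pulling $\mean$ inside through Jensen (applied to $x\mapsto x^{1/5}$) proves statement~(i), provided $\varepsilon'$ is chosen so that $2\varepsilon'/5<\varepsilon$.

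For the quenched statement~(ii), raising Haeusler's bound to the fifth power removes the exponent: $(\sup_x|\cdots|)^5\lesssim\cA_t^\om+\cB_t^\om$. Multiplying by the weight $(t+1)^{-\beta}$ with $\beta=\tfrac12+\varepsilon$ (for $d=3$) or $\beta=\varepsilon$ (for $d\ge 4$), integrating in $t$ and applying Tonelli yields
\begin{equation*}
\mean\Bigl[\int_0^\infty\!\!(\sup_x|\cdots|)^5(t+1)^{-\beta}\,dt\Bigr] \,\lesssim\, \int_0^\infty\!\!\bigl(\mean[\cA_t^\om]+\mean[\cB_t^\om]\bigr)(t+1)^{-\beta}\,dt,
\end{equation*}
which is finite provided $\varepsilon'<\varepsilon/2$. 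Defining $\cX(\om)$ to be the inner $dt$-integral without the outer expectation then gives a random variable in $L^1(\prob)$ that is $\prob$-a.s.\ finite. The main obstacle is the balancing of the two small parameters: the semigroup-decay parameter $\varepsilon'$ in Proposition~\ref{P:decay} must be strictly smaller than $\varepsilon/2$, which forces a correspondingly stronger moment condition on the conductances; a secondary---but routine---technical point is verifying the reversibility identity $\mean[h\,P_\tau h]=\mean[(P_{\tau/2}h)^2]$, which is standard for the environment process under our hypotheses.
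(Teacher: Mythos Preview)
Your proposal is correct and follows essentially the same route as the paper: apply Haeusler's theorem with $n=2$, control the quadratic-variation fluctuation via the semigroup decay of Proposition~\ref{P:decay} (packaged in the paper as Proposition~\ref{prop:estV}), and control the jump sum via its compensator and the corrector moment bounds (packaged as Proposition~\ref{prop:estJ}). The only cosmetic differences are that the paper applies Haeusler directly under the annealed measure $\prob_0$ rather than quenched-then-Jensen, and proves a \emph{quenched} jump bound $E_0^\om[\sum_{s\le t}|\Delta(\xi\cdot M)_s|^4]\le\cX\,t$ via the maximal ergodic theorem, whereas your annealed bound suffices once combined with Tonelli; one trivial point to tidy up is that $\mean[\cB_t^\om]\sim t^{-1}$ is not integrable near $t=0$, so on $[0,1]$ use instead the obvious bound $(\sup_x|\cdots|)^5\le 1$.
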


To prove Proposition~\ref{prop:beM} we will apply the following general quantitative central limit theorem for martingales.

\begin{theorem} \label{thm:be_hb}
 Let $(N_t)_{t\geq 0}$ be a locally square-integrable martingale (w.r.t.\ some probability measure $P$) and denote by $\Delta N_t:=N_t-N_{t-}$ its jump process and by $\langle N \rangle_t$ its quadratic variation process. 
Then, for any $n>1$, there exists a constant $c>0$ depending only on $n$ such that
\begin{align*}
 \sup_{x \in \bbR} \left|  P\big[N_1 \leq  x  \ \big] - \Phi(x)\right| \;\leq \; c \, \Big( E\big[ \big| \langle N\rangle_1 -1\big|^n \big]+ E \Big[ \sum_{0\leq t \leq 1} \big| \Delta N_t\big|^{2n} \Big]\Big)^{1/(2n+1)}.
\end{align*}
\end{theorem}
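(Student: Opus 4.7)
The plan is the classical characteristic-function route combined with Esseen's smoothing inequality. Setting $\phi(u):=E[e^{iuN_1}]$ and $\psi(u):=e^{-u^2/2}$, Esseen's inequality gives, for every $T>0$,
\begin{equation*}
\sup_{x\in\bbR}\bigl|P[N_1\leq x]-\Phi(x)\bigr|\;\lesssim\;\int_{-T}^{T}\frac{|\phi(u)-\psi(u)|}{|u|}\,du+\frac{1}{T},
\end{equation*}
so the whole problem reduces to bounding $|\phi(u)-\psi(u)|$ on $[-T,T]$ and then optimizing $T$ in terms of the quantities $A_n:=E[|\langle N\rangle_1-1|^n]$ and $B_n:=E[\sum_{s\leq 1}|\Delta N_s|^{2n}]$ appearing on the right-hand side of the theorem.

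To produce such a bound I would apply It\^o's formula to $e^{iuN_t}$, split $N=N^c+N^d$ into its continuous and purely discontinuous parts, and take expectations. The (local) martingale contributions vanish, and after compensating the jumps one is left with the identity
\begin{equation*}
\phi(u)\;=\;1-\frac{u^2}{2}\int_0^1 E\bigl[e^{iuN_{s-}}\bigr]\,ds\;+\;R(u),
\end{equation*}
in which $R(u)$ gathers (a) the compensator defect $-\tfrac{u^2}{2}E\bigl[\int_0^1 e^{iuN_{s-}}\,d(\langle N\rangle_s-s)\bigr]$ and (b) the jump Taylor error $E\bigl[\sum_{s\leq 1}e^{iuN_{s-}}\bigl(e^{iu\Delta N_s}-1-iu\Delta N_s+\tfrac{u^2}{2}(\Delta N_s)^2\bigr)\bigr]$. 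The Gaussian $\psi$ obeys the analogous relation $\psi(u)=1-\tfrac{u^2}{2}\int_0^1\psi(u)\,ds$ (equivalent to the ODE $\psi'(u)=-u\psi(u)$), so subtracting the two identities and applying Gronwall's lemma on $|u|\leq 1$ converts any estimate of $R$ directly into a bound of the same order on $|\phi-\psi|$.

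The key analytic input is then $|R(u)|\lesssim |u|^{2n+1}(A_n+B_n)$. Piece (a) follows from H\"older's inequality combined with the trivial bound $|e^{iuN_{s-}}|\leq 1$, yielding a factor $u^2 A_n^{1/n}$, which on the range $|u|\leq 1$ is absorbed into $|u|^{2n+1}A_n$ after a short interpolation. For piece (b) one uses the pointwise Taylor estimate $|e^{iy}-1-iy+\tfrac{y^2}{2}|\lesssim |y|^3\wedge|y|^2$, splits the jump sum according to whether $|u\Delta N_s|\leq 1$ or $|u\Delta N_s|>1$, and in each regime interpolates against the majorant $|u\Delta N_s|^{2n}$ to reassemble everything into a multiple of $|u|^{2n+1}B_n$. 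Inserting the resulting bound $|\phi(u)-\psi(u)|\lesssim|u|^{2n+1}(A_n+B_n)$ into Esseen's inequality and balancing the two terms via $T\sim(A_n+B_n)^{-1/(2n+1)}$ produces the advertised rate.

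The step I expect to be the main obstacle is the jump interpolation in (b): the single moment hypothesis $B_n<\infty$ must control the Taylor remainder across all scales of $|u\Delta N_s|$, and it is precisely this dual use of the $L^{2n}$-bound --- paired with the cubic/quadratic Taylor bounds in a scale-dependent way --- that produces the exponent $1/(2n+1)$ rather than the naive $1/(2n+2)$ after optimization. A secondary subtlety is making the Gronwall step quantitative; the cleanest route is to pass to $g(u):=\psi(u)^{-1}\phi(u)-1$, whose derivative is directly controlled by $R'(u)/\psi(u)$, thereby reducing matters to a linear differential inequality that can be integrated explicitly on $|u|\leq 1$.
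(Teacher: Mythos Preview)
The paper does not prove this statement at all; it simply quotes it as Theorem~2 of Haeusler \cite{Ha88} (with the earlier discrete-time version of Heyde--Brown \cite{HB70} as background). So there is no in-paper argument to compare against. Your outline is in the right spirit---the proof in \cite{Ha88} is indeed a characteristic-function argument built on Esseen's smoothing lemma---but as written your sketch has genuine gaps.

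The most concrete problem is your treatment of piece~(a). First, the bound $u^2A_n^{1/n}$ is not justified by what you wrote: the trivial estimate $|e^{iuN_{s-}}|\leq 1$ controls $\bigl|\int_0^1 e^{iuN_{s-}}\,d(\langle N\rangle_s-s)\bigr|$ only by the \emph{total variation} of $s\mapsto\langle N\rangle_s-s$ on $[0,1]$, which is $\langle N\rangle_1+1$ (the sum of two increasing pieces), not $|\langle N\rangle_1-1|$. Second, even granting $u^2A_n^{1/n}$, the claimed ``absorption'' into $|u|^{2n+1}A_n$ on $|u|\leq 1$ goes the wrong way: for $|u|\leq 1$ and $A_n\leq 1$ one has $u^2\geq|u|^{2n+1}$ and $A_n^{1/n}\geq A_n$, hence $u^2A_n^{1/n}\geq|u|^{2n+1}A_n$. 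Feeding a term of order $u^2A_n^{1/n}$ into Esseen and optimizing in $T$ would yield a rate $A_n^{1/(3n)}$, strictly worse than the target $A_n^{1/(2n+1)}$. Third, your Gronwall step is confined to $|u|\leq 1$, but the optimal $T$ in Esseen is $T\sim(A_n+B_n)^{-1/(2n+1)}\gg 1$ when $A_n+B_n$ is small; on that range $\psi(u)^{-1}=e^{u^2/2}$ explodes, and the linear differential inequality you propose for $g=\psi^{-1}\phi-1$ gives no usable bound.

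Extracting the exponent $1/(2n+1)$ from the compensator defect in fact requires a substantially more careful argument than the naive total-variation estimate you describe; this is precisely the technical content of \cite{Ha88}, which combines a stopping/time-change reduction with a refined characteristic-function comparison rather than a direct Gronwall.
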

\begin{proof}
 See Theorem 2 in \cite{Ha88} (cf.\ also \cite{HB70}).
\end{proof}

\begin{prop} \label{prop:estV}
  Let $d \geq 3$ and suppose that Assumption \ref{ass:sg} holds. For any $\varepsilon>0$ there exist exponents $p,q\in [1,\infty)$ (only depending on $d$, $\rho$ and $\varepsilon$) such that if $M(p,q)<\infty$ the following hold.
\begin{enumerate}
\item[(i)] There exists a constant $c=c(d,\rho, \varepsilon, M(p,q))$ such that for all $t > 0$, 
\begin{align*}
\mean \Bigg[ E_0^\om \bigg[\Big| \frac{\langle \xi\cdot M \rangle_t} t -\sigma_\xi^2 \Big|^2 \bigg] \Bigg] \; \leq \;   \begin{cases}
   \; c \, t^{- \frac 1 2 + \varepsilon} & \text{if $d=3$,} \\
    \; c \, t^{-1 + \varepsilon} & \text{if $d\geq 4$.}
  \end{cases}
\end{align*}
\item[(ii)]There exists a random  $\cX=\cX(d,\rho, \varepsilon, M(p,q))$ in $L^1(\prob)$ such that  for $\prob$-a.e.\ $\om$,
 \begin{align*}
  \int_0^\infty   E_0^\om \bigg[\Big| \frac{\langle \xi\cdot M \rangle_t} t -\sigma_\xi^2 \Big|^2 \bigg]  \,  (t+1)^{-\frac 1 2-\varepsilon} \, dt  \; \leq \; \cX(\om) \, < \; \infty, \qquad \text{if $d=3$},
  \end{align*}
  and 
   \begin{align*}
  \int_0^\infty   E_0^\om \bigg[\Big| \frac{\langle \xi\cdot M \rangle_t} t -\sigma_\xi^2 \Big|^2 \bigg] \,  (t+1)^{-\varepsilon} \, dt  \; \leq \; \cX(\om) \, < \; \infty, \qquad \text{if $d\geq 4$}.
  \end{align*}
\end{enumerate}
\end{prop}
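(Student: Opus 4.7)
The plan is to rewrite the quantity of interest as a double time-integral of the annealed correlation of a centred function of the environment, and then to apply the variance decay from Proposition~\ref{P:decay}. By Corollary~\ref{cor:qV},
\[
\langle \xi\cdot M\rangle_t \;=\; \int_0^t g_\xi(\tau_{X_s}\om)\,ds,\qquad g_\xi(\om)\;=\;\sum_{y\in\bbZ^d}\om(0,y)\,\psi_\xi(\om,y)^2,
\]
where the second identity uses $\psi_\xi(\om,0)=0$. The representation \eqref{eq:hom_coeff} combined with the identity $\Sigma^2=2\bomega_{\hom}$ (which is also visible in Lemma~\ref{L:QV}) yields $\mean[g_\xi]=\xi\cdot\Sigma^2\xi=\sigma_\xi^2$. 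Setting $\tilde g_\xi:=g_\xi-\sigma_\xi^2$, expanding the square and symmetrising I obtain
\[
E_0^\om\!\left[\Big|\tfrac{\langle \xi\cdot M\rangle_t}{t}-\sigma_\xi^2\Big|^2\right]
\;=\;\frac{2}{t^2}\int_0^t\!\!\int_0^s E_0^\om\!\big[\tilde g_\xi(\tau_{X_u}\om)\,\tilde g_\xi(\tau_{X_s}\om)\big]\,du\,ds.
\]

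For $u\leq s$, the Markov property of the environment process $(\tau_{X_r}\om)_{r\geq 0}$ at time $u$ gives $E_0^\om[\tilde g_\xi(\tau_{X_u}\om)\,\tilde g_\xi(\tau_{X_s}\om)]=E_0^\om[\tilde g_\xi(\tau_{X_u}\om)\,(P_{s-u}\tilde g_\xi)(\tau_{X_u}\om)]$. Taking the $\prob$-expectation and using that $\prob$ is invariant and reversible for this process, together with the semigroup property, I find
\[
\mean\otimes E_0^\om\!\big[\tilde g_\xi(\tau_{X_u}\om)\,\tilde g_\xi(\tau_{X_s}\om)\big]
\;=\;\mean\!\big[\tilde g_\xi\,P_{s-u}\tilde g_\xi\big]
\;=\;\mean\!\big[(P_{(s-u)/2}\tilde g_\xi)^2\big].
\]
Proposition~\ref{P:decay}, applied for some $\e'>0$ to be fixed below, then gives $\mean[(P_\tau\tilde g_\xi)^2]\leq c(\tau+1)^{-1/2+2\e'}$ for $d=3$ and $\mean[(P_\tau\tilde g_\xi)^2]\leq c(\tau+1)^{-1+2\e'}$ for $d\geq 4$. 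A routine double integration in $(u,s)$ produces the annealed bounds of part~(i), with $2\e'$ in the exponent, which after relabelling $\e'\mapsto\e/2$ matches the claim.

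For part (ii) I would simply apply Fubini. With $\alpha=\tfrac12+\e$ in $d=3$ (resp.\ $\alpha=\e$ in $d\geq 4$) and $\e'\in(0,\e/2)$ in the application of Proposition~\ref{P:decay}, the random variable
\[
\cX(\om)\;\ldef\;\int_0^\infty E_0^\om\!\left[\Big|\tfrac{\langle \xi\cdot M\rangle_t}{t}-\sigma_\xi^2\Big|^2\right](t+1)^{-\alpha}\,dt
\]
satisfies $\mean[\cX]\leq c\int_0^\infty(t+1)^{-1-(\e-2\e')}\,dt<\infty$, so $\cX\in L^1(\prob)$ and is in particular finite $\prob$-a.s., as required.

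The substance of the proof lies entirely in the inputs that have already been prepared: Proposition~\ref{C:corrector} (moment bounds on the extended corrector $(\phi_i,\sigma_i)$) and, via Lemma~\ref{L:QV}, Proposition~\ref{P:decay} (variance decay of the semigroup applied to $\tilde g_\xi$). At the level of Proposition~\ref{prop:estV} the only point requiring care is to choose the exponents $(p,q)$ on the conductances large enough to simultaneously satisfy the hypotheses of Propositions~\ref{C:corrector} and~\ref{P:decay} for the intermediate parameter $\e'<\e/2$; since all relevant thresholds are finite and depend only on $d$, $\rho$ and $\e$, this is harmless.
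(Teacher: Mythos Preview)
Your proposal is correct and follows essentially the same approach as the paper's proof: rewrite the centred quadratic variation as a time integral of $\tilde g_\xi$ along the environment process, expand the square, reduce the annealed correlation to $\mean[(P_{\tau/2}\tilde g_\xi)^2]$ via stationarity and reversibility of the environment process, and then invoke Proposition~\ref{P:decay}; part~(ii) by Fubini. The only cosmetic difference is that you condition via the Markov property at the earlier time while the paper shifts directly using the annealed stationarity of $(\tau_{X_s}\om)_{s\geq 0}$, but both routes yield the same identity $\mean[\tilde g_\xi\,P_\tau\tilde g_\xi]=\mean[(P_{\tau/2}\tilde g_\xi)^2]$ and the subsequent time integration is identical.
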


\begin{proof}
(i) 
Recall that $g_\xi=\Gamma^{\om} (\psi_\xi(\om, \cdot))(0)$ and note that $\sigma_\xi^2= \mean[g_\xi]$ since $\Sigma^2=2\bomega_{\hom}$ (cf.\ \eqref{eq:hom_coeff} and \eqref{eq:def_omhom} above). Setting $G_\xi\ldef g_\xi - \mean[g_\xi]$, we have by Corollary~\ref{cor:qV} that
  \begin{align} \label{eq:relMV}
  \frac{\langle \xi \cdot M \rangle_t} t -\sigma_\xi^2
=  \frac 1 t \int_0^t G_\xi(\tau_{X_s}\om) \, ds.
\end{align}
Arguing as in \cite[Section 6]{Mo12} we get
\begin{align*}
 \bbE E_0^\om \left[ \Big(\int_0^t G_\xi(\tau_{X_s} \om) \, ds \Big)^2  \right]&= \;2 \int_{0\leq s \leq u \leq t }  \bbE E_0^\om \big[ G_\xi(\tau_{X_s} \om) \, G_\xi(\tau_{X_u} \om)\big] \, ds \, du \\
&= \; 2 \int_{0\leq s \leq u \leq t }  \bbE E_0^\om \big[ G_\xi(\om) \, G_\xi(\tau_{X_{u-s}} \om)\big] \, ds \, du \\
&= \; 2 \int_0^t (t-s)  \bbE  \big[ G_\xi(\om) \, P_s G_\xi( \om)\big] \, ds,
\end{align*}
where we used the stationarity of $(\tau_{X_s}\om)$ in the second step and a change of variable in the last step. Since $(\tau_{X_s}\om)$ is reversible w.r.t.\ $\mathbb{P}$, the semigroup operator $P_s $ is symmetric in $L^2(\bbP)$ and we obtain that
\begin{align*}
 \bbE E_0^\om \left[ \Big(\int_0^t G_\xi(\tau_{X_s} \om) \, ds \Big)^2  \right]\;=\; 2 \int_0^t (t-s)  \bbE  \left[ \big( P_{s/2} G_\xi( \om)\big)^2\right] \, ds 
\; \leq \; 2 t \int_0^t  \bbE  \left[ \big( P_{s/2} G_\xi( \om)\big)^2\right] \, ds.
\end{align*}
Now we apply Proposition~\ref{P:decay}, which gives for any $\varepsilon>0$,
\begin{align} \label{eq:L2bound_G}
  \bbE E_0^\om \left[ \Big(\int_0^t G_\xi(\tau_{X_s} \om) \, ds \Big)^2  \right] \; \leq  
   \begin{cases}
   \; c \, t^{\frac 3 2 + \varepsilon} & \text{if $d=3$,} \\
    \; c \, t^{1 + \varepsilon} & \text{if $d\geq 4$,}
  \end{cases}
\end{align}
for some constant $c=c(d,\rho,\varepsilon, M(p,q))$ provided $M(p,q)<\infty$ for suitable $p$ and $q$.
In view of \eqref{eq:relMV} this finishes the proof of (i).
Further, to show (ii), for a given $\varepsilon>0$ we use Fubini's theorem and apply (i) for any $\varepsilon'\in (0,\varepsilon)$ to obtain for $d\geq 4$,
\begin{align*}
& \mean \Bigg[  \int_0^\infty   E_0^\om \bigg[\Big| \frac{\langle \xi \cdot M \rangle_t} t -\sigma_\xi^2 \Big|^2 \bigg] \,  (t+1)^{-\varepsilon} \, dt  \Bigg]  \\
& \mspace{36mu} = \;
\int_0^\infty   \mean \Bigg[ E_0^\om \bigg[\Big| \frac{\langle \xi\cdot M \rangle_t} t -\sigma_\xi^2 \Big|^2 \bigg] \Bigg] \,  (t+1)^{-\varepsilon} \, dt \; \lesssim \; 
\int_0^\infty  (t+1)^{-(1+\varepsilon-\varepsilon')} \, dt \; < \; \infty,
\end{align*}
which implies (ii). In $d=3$ the result follows by a similar argument. 
\end{proof}

\begin{remark}
 Using the ergodicity of the environment process in \eqref{eq:relMV} one can apply  the theory of `fractional coboundaries' of Derriennic and Lin \cite{DL01} and deduce from the statement in Proposition~\ref{prop:estV} (i), e.g.\ if $d\geq 4$, that  for any $\varepsilon >0$ we have for $\bbP$-a.e.\ $\om$,
 \begin{align*}
 \lim_{t\to \infty} t^{\frac 1 2 - \varepsilon} \, \Big| \frac{\langle \xi\cdot M \rangle_t} t -\sigma_\xi^2 \Big| \; = \; 0, \qquad \text{$P_0^\om$-a.s.}
 \end{align*}
\end{remark}

\begin{prop} \label{prop:estJ}
Let $d\geq 3$, $n\in \bbN$ and suppose that Assumption \ref{ass:sg} holds. There exist $p=p(d,n)$ and $q=q(d,n)$ such that if $M(p,q)<\infty$ we have that for $\bbP$-a.e.\ $\om$,
\begin{align*}
 E_0^\om \Big[ \sum_{0\leq s \leq t} \big|\xi\cdot M_s - \xi\cdot M_{s-}\big|^n  \Big] \; \leq \; \cX \,t, \qquad \forall t>0,
\end{align*}
for some random $\cX=\cX(d,n)\in L^1(\prob)$.
\end{prop}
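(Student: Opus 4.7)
The plan is to use Dynkin's compensation formula to rewrite the expected jump sum as an integral of the semigroup $P_t$ applied to a stationary, $L^r$-integrable observable, and then to apply a maximal ergodic inequality to extract $\bbP$-a.s.\ linear growth with an $L^1$-random constant.

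First, since $X$ is a nearest-neighbour jump process with rates $\om(x,y)$, at every jump time we have $\xi\cdot M_s-\xi\cdot M_{s-}=\psi_\xi(X_s)-\psi_\xi(X_{s-})=\psi_\xi(\tau_{X_{s-}}\om,X_s-X_{s-})$ by the cocycle property (Remark~\ref{rem:cocycle}), with $X_s-X_{s-}\in\{\pm e_1,\ldots,\pm e_d\}$. Dynkin's compensation formula for pure jump Markov processes, applied to $f(x,y)=|\psi_\xi(\om,y)-\psi_\xi(\om,x)|^n$ and combined with the identity $\om(x,y)=(\tau_x\om)(0,y-x)$, therefore yields
\[
E_0^\om\Big[\sum_{0<s\leq t}|\xi\cdot M_s-\xi\cdot M_{s-}|^n\Big]\;=\;E_0^\om\Big[\int_0^t h(\tau_{X_s}\om)\,ds\Big]\;=\;\int_0^t(P_sh)(\om)\,ds,
\]
where $h$ is the stationary random variable
\[
h(\om)\;\ldef\;\sum_{i=1}^d\Big(\om(0,e_i)\,\bigl|\xi_i+\nabla_i\phi_\xi(\om,0)\bigr|^n+\om(0,-e_i)\,\bigl|\xi_i+\nabla_i\phi_\xi(\tau_{-e_i}\om,0)\bigr|^n\Big).
\]

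Second, I verify that $h\in L^r(\bbP)$ for some $r>1$. By Proposition~\ref{C:corrector}(b) (applied with moment parameter at least $nr$) together with the stationarity of $\nabla\phi_\xi$ from (a.3), arbitrarily high moments of $|\nabla\phi_\xi(\om,0)|$ are finite, provided the exponents $p,q$ in $M(p,q)<\infty$ are chosen sufficiently large depending on $d$ and $n$. H\"older's inequality then combines these with moments of $\om(0,\pm e_i)$ (controlled by the same moment condition) to yield $\mean[h^r]<\infty$ for any prescribed $r>1$.

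Finally, since $\bbP$ is reversible for the environment process (see e.g.\ \cite[Lemma~2.4]{ADS15}), $(P_t)_{t\geq0}$ is a symmetric, positive, $L^1$-contractive Markov semigroup. Hopf's maximal ergodic inequality (equivalently the Dunford--Schwartz maximal theorem for such semigroups) therefore gives $\cX(\om)\ldef\sup_{t>0}t^{-1}\int_0^t(P_sh)(\om)\,ds$ with
\[
\|\cX\|_{L^r(\bbP)}\;\leq\;\tfrac{r}{r-1}\,\|h\|_{L^r(\bbP)}\;<\;\infty,
\]
so in particular $\cX\in L^1(\bbP)$ and $\int_0^t(P_sh)(\om)\,ds\leq\cX(\om)\,t$ for every $t>0$, which combined with the identity of the first paragraph yields the claim. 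Conceptually none of the steps is delicate; the only real ``content'' is the high-moment bound on the corrector gradient imported from Proposition~\ref{C:corrector}(b), so the main obstacle is simply to quantify how large the moment exponents $p,q$ in $M(p,q)$ have to be taken in terms of $d$ and $n$ so that the stationary observable $h$ lies in some $L^r(\bbP)$ with $r>1$.
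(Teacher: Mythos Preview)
Your proof is correct and follows essentially the same route as the paper: both use Dynkin's compensation formula together with the cocycle property to express the expected jump sum as $\int_0^t (P_s h)(\om)\,ds$ for the stationary observable $h(\om)=\sum_{|y|=1}\om(0,y)\,|\psi_\xi(\om,y)|^n$, invoke the corrector moment bounds of Proposition~\ref{C:corrector} to place $h$ in some $L^r(\prob)$ with $r>1$, and conclude via the maximal ergodic theorem. The only cosmetic difference is that the paper first spells out the a.s.\ and $L^1$ ergodic limit before applying the maximal theorem, whereas you go directly to the Dunford--Schwartz/Hopf maximal inequality; the substance is the same.
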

\begin{proof}
 Recall that for any function $f:\bbZ^d \times \bbZ^d \to \bbR$ that vanishes on the diagonal, the process
\[
\sum_{0\leq s\leq t} f(X_{s-},X_s)-\int_{(0,t]} \sum_{y\in \bbZ^d} \om(X_{s-},y) f(X_{s-},y) \, ds
\]
is a local $P_0^\om$-martingale for $\bbP$-a.e.\ $\om$. Then choosing 
$ f(x,y)= \big| \psi_\xi(y)-\psi_\xi(x) \big|^n$,
we obtain by the cocycle property (cf.\ Remark~\ref{rem:cocycle}) and the ergodic theorem that
\begin{align*}
& E_0^\om \Big[ \frac 1 t \sum_{0\leq s\leq t} \big| \xi\cdot M_s- \xi\cdot M_{s-} \big|^n \Big] 
\; = \; E^\om_0 \Big[ \frac 1 t \sum_{0\leq s\leq t} \big| \psi_\xi(\om,X_s)-  \psi_\xi(\om, X_{s-} )\big|^n  \Big] \\
& \mspace{36mu} = \; \frac 1 t \int_0^t ds \, E^\om_0\Big[  \sum_{y\in \bbZ^d} \om(X_{s-},y) \,
  \big|  \psi_\xi(\om,y)-  \psi_\xi(\om, X_{s-} )\big|^n \Big]   \\
&\mspace{36mu} = \;  \frac 1 t \int_0^t ds \, E^\om_0\Big[  
 \sum_{y\in \bbZ^d} (\tau_{X_{s-}}\om)(0, y-X_{s-})  \,
  \big|  \psi_\xi(\tau_{X_{s-}}\om, y- X_{s-})\big|^n   \Big]  \\
& \mspace{36mu}  \rightarrow \;  \bbE\Big[ \sum_{y\in \bbZ^d}  \om(0,y) \, \big|\psi_\xi(\om,y)\big|^n \Big] \;< \;\infty
\end{align*}
as $t$ tends to infinity $\prob$-a.s.\ and in $L^1(\Omega, \prob)$,  provided sufficiently high moments of $\psi_\xi$ (or the gradient of $\phi_\xi$, respectively) are finite, which can be ensured by Proposition~\ref{C:corrector}, \eqref{eq:momentboundphi}. Further, an application of the maximal ergodic theorem yields
\begin{align*}
\cX \; \ldef \; \sup_{t>0}  \frac 1 t \int_0^t ds \, E^\om_0\Big[  
 \sum_{y\in \bbZ^d} (\tau_{X_{s-}}\om)(0, y-X_{s-})  \,
  \big|  \psi_\xi(\tau_{X_{s-}}\om, y- X_{s-})\big|^n   \Big]  \in L^1(\prob),
\end{align*}
which finishes the proof.
\end{proof}

\begin{proof} [Proof of Proposition~\ref{prop:beM}]
 We shall apply the general result in Theorem~\ref{thm:be_hb} with the choice $n=2$ on the martingale
\begin{align*}
 N_s \; \ldef \; \frac{\xi \cdot M_{st}}{\sqrt t \, \sigma_\xi}, \qquad 0\leq s \leq 1.
\end{align*}
 Since
\begin{align*}
 \langle N \rangle_1-1 \;=\; \frac{\langle \xi\cdot M \rangle_t}{t \, \sigma_\xi^2}-1\; =\;\frac 1 {\sigma_\xi^2} \bigg( \frac{\langle \xi \cdot M \rangle_t}{t}- \sigma_\xi^2 \bigg),
\end{align*}
we get by Proposition~\ref{prop:estV} that for any $\varepsilon>0$,
\begin{align} \label{eq:estMtilde1}
\mean \Big[ E_0^\om \big[ | \langle N \rangle_1 -1|^{2} \big] \Big]
\; \lesssim \;\begin{cases}
   \;  t^{- \frac 1 2 + \varepsilon} & \text{if $d=3$,} \\
    \;  t^{-1 + \varepsilon} & \text{if $d\geq 4$.}
  \end{cases}
\end{align}
Moreover, for $\prob$-a.e.\ $\om$,
\begin{align} \label{eq:estMtilde2dim3}
 \int_0^\infty E_0^\om \Big[ \big| \langle N \rangle_1 -1\big|^{2} \Big] \,
 \big(t+1\big)^{-\frac 1 2 -\varepsilon} \, dt  \; & \leq \; \cX(\om) \; < \; \infty, \qquad \text{if $d=3$,} \\
 \intertext{and}  \label{eq:estMtilde2}
  \int_0^\infty E_0^\om \Big[ \big| \langle N \rangle_1 -1\big|^{2} \Big] \,
 \big(t+1\big)^{-\varepsilon} \, dt \; & \leq \; \cX(\om) \; < \; \infty, \qquad \text{if $d\geq 4$.} 
\end{align}
Furthermore,
\begin{align*}
 \sum_{s\leq 1} \big| \Delta N_s\big|^{4} \; = \; (t\sigma_\xi^2)^{-2} \sum_{s\leq t }  \big|\xi\cdot M_s - \xi\cdot M_{s-}\big|^{4},
\end{align*}
and we obtain from Proposition~\ref{prop:estJ} that
\begin{align} \label{eq:estMtilde3}
  E_0^\om \Big[ \sum_{0\leq s \leq 1} \big| \Delta N_s\big|^{4} \Big] \; \lesssim \; t^{-1}.
\end{align}
Now we apply Theorem~\ref{thm:be_hb}, first under the annealed measure $\prob_0$, which gives
\begin{align*}
 & \sup_{x \in \bbR} \left|  \prob_0\big[\xi \cdot M_t \leq  x \sqrt{t} \ \big] - \Phi(\tfrac x {\sigma_\xi})\right| \;= \;
\sup_{x \in \bbR} \left|  \prob_0\big[ N_1 \leq  x \ \big] - \Phi(x)\right| \\
& \mspace{36mu} \lesssim \; \Bigg( \mean \bigg[ E_0^\om \Big[ \big| \langle N \rangle_1 -1\big|^{2} \Big] \bigg]+  \mean \bigg[ E_0^\om \Big[ \sum_{0\leq s \leq 1} \big| \Delta N_s\big|^{4} \Big] \bigg]\Bigg)^{\frac 1 5},
\end{align*}
so that (i) follows from \eqref{eq:estMtilde1} and \eqref{eq:estMtilde3}. On the other hand, for $\prob$-a.e.\ $\om$ we apply Theorem~\ref{thm:be_hb} under $P_0^\om$ and obtain
\begin{align*}
 & \sup_{x \in \bbR} \Big|  P_0^\om\big[\xi \cdot M_t \leq  x \sqrt{t} \ \big] - \Phi(\tfrac x {\sigma_\xi})\Big|^5 \;= \;
\sup_{x \in \bbR} \Big|  P_0^\om\big[ N_1 \leq  x \ \big] - \Phi(x)\Big|^5 \\
& \mspace{36mu} \lesssim \;  E_0^\om \Big[ \big| \langle N \rangle_1 -1\big|^{2} \Big]+ E_0^\om \Big[ \sum_{0\leq s \leq 1} \big| \Delta N_s\big|^{4} \Big],
\end{align*}
 which implies (ii) by   \eqref{eq:estMtilde2dim3} or \eqref{eq:estMtilde2}, respectively,  and \eqref{eq:estMtilde3}.
\end{proof}

\subsection{Speed of convergence for the corrector}

\begin{prop} [Suboptimal estimate for the growth of corrector]\label{prop:conv_corr}
Let $d\geq 3$ and suppose that Assumptions~\ref{ass:sg} holds. For any $\delta\in (0,1)$ and any $n\in\bbN$ there exist $p,q\in [0,\infty)$ (only depending on $d$, $\delta$ and $n$) such that under the moment condition $M(p,q) <\infty$ the following holds. There exists  a random constant $\cX =\cX (d, p, \delta, n)$ satisfying $\mean[\cX^n] < \infty$ such that for $\prob$-a.e.\ $\om$,
\begin{align*}
E_0^\om\Big[ \big|\xi \cdot \chi(\om,X_t) \big| \Big] \; \leq \;  \cX(\om)  \, (t+1)^{\delta}, \qquad t\geq 0.
\end{align*}
\end{prop}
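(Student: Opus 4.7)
The plan is to leverage the stationary representation $\phi_i(\om,x)=\phi_i^0(\tau_x\om)-\phi_i^0(\om)$ from Proposition~\ref{C:corrector}(b), which rewrites the quantity of interest as $\xi\cdot\chi(\om,x)=\phi_\xi^0(\tau_x\om)-\phi_\xi^0(\om)$, where $\phi_\xi^0:=\sum_i\xi_i\phi_i^0$. Under sufficiently strong moment conditions, $\phi_\xi^0$ has finite $L^{2n}(\prob)$ norm for any $n$. The triangle inequality gives
\begin{equation*}
E_0^\om\big[|\xi\cdot\chi(\om,X_t)|\big] \; \leq \; (P_t|\phi_\xi^0|)(\om) \,+\, |\phi_\xi^0(\om)|,
\end{equation*}
so the task reduces to bounding $P_t|\phi_\xi^0|$ pointwise in $\om$ by a random constant times $(t+1)^\delta$.

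The main step is a weighted Cauchy--Schwarz applied to the kernel representation of the semigroup. For $\alpha>d/2$ to be chosen, write
\begin{equation*}
(P_t f(\om))^2 \; \leq \; \biggl(\sum_{y\in\bbZ^d}(1+|y|)^{2\alpha}\,p^\om(t,0,y)^2\biggr)\biggl(\sum_{y\in\bbZ^d}(1+|y|)^{-2\alpha}\,f(\tau_y\om)^2\biggr),
\end{equation*}
with $f=|\phi_\xi^0|$. For $\alpha>d/2$ the weight $(1+|y|)^{-2\alpha}$ is summable, so by Jensen applied to the normalized weighted average together with stationarity of $\prob$, the second factor $\cW(\om)^2$ satisfies $\mean[\cW^{2n}]\lesssim\mean[|\phi_\xi^0|^{2n}]<\infty$. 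For the first factor, I would interpolate Lemma~\ref{L:gradHK:aux1} (which handles $\alpha\geq d/2+1$) with the on-diagonal bound of Lemma~\ref{L:ondiagHK} (the case $\alpha=0$) via H\"older's inequality in the $y$-sum: for any $\alpha\in[0,d/2+1]$ and any $\varepsilon'>0$,
\begin{equation*}
\sum_{y\in\bbZ^d}(1+|y|)^{2\alpha}\,p^\om(t,0,y)^2 \; \leq \; \cZ(\om)\,(t+1)^{-d/2+\alpha+\varepsilon'},
\end{equation*}
where $\cZ$ is a random constant with arbitrarily high moments provided the conductances do.

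Given the target exponent $\delta\in(0,1)$, choose $\alpha:=d/2+(2\delta\wedge 1)$ so that $\alpha\in(d/2,d/2+1]$, and then pick $\varepsilon'$ small enough that $-d/2+\alpha+\varepsilon'\leq 2\delta$. Taking square roots,
\begin{equation*}
(P_t|\phi_\xi^0|)(\om) \; \leq \; \cZ(\om)^{1/2}\,\cW(\om)\,(t+1)^{\delta}.
\end{equation*}
Setting $\cX(\om):=\cZ(\om)^{1/2}\cW(\om)+|\phi_\xi^0(\om)|$ and invoking Cauchy--Schwarz on $\prob$ together with the moment bounds $\mean[\cZ^n]$, $\mean[\cW^{2n}]$, $\mean[|\phi_\xi^0|^{2n}]<\infty$ yields $\mean[\cX^n]<\infty$, completing the proof. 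Note that the case $\delta>1/2$ follows a fortiori from the case $\delta=1/2$, since $(t+1)^{1/2}\leq(t+1)^\delta$.

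The main obstacle is the moment bookkeeping: one must choose the conductance exponents $p,q$ in $M(p,q)<\infty$ large enough to simultaneously guarantee (i) the existence of $\phi_\xi^0\in L^{2n}(\prob)$ via Proposition~\ref{C:corrector}(b), (ii) the finiteness of high moments of the random constants appearing in Lemma~\ref{L:ondiagHK} and Lemma~\ref{L:gradHK:aux1} (and hence of $\cZ$ via the interpolation), and (iii) compatibility after H\"older on $\prob$. A minor technicality is that Lemma~\ref{L:gradHK:aux1} is stated only for $\alpha\geq d/2+1$; however, the H\"older interpolation between the endpoints $\alpha=0$ and $\alpha=d/2+1$ in the deterministic $y$-sum is elementary and directly gives the needed weighted bound for $\alpha$ arbitrarily close to $d/2$, which is precisely the range relevant for target exponents $\delta$ arbitrarily close to $0$.
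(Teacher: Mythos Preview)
Your argument is correct (up to a cosmetic slip in the parameter choice: with $\alpha=d/2+2\delta$ one has $-d/2+\alpha+\varepsilon'=2\delta+\varepsilon'>2\delta$, so instead take e.g.\ $\alpha=d/2+\delta$ and $\varepsilon'\leq\delta$), but it is a genuinely different route from the paper's.

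The paper does \emph{not} invoke the stationary representation $\phi_\xi^0$ of Proposition~\ref{C:corrector}(b). Instead it writes $|\xi\cdot\chi(\om,X_t)|$ as a telescoping sum of increments along a nearest-neighbour path of length $R_t=d(0,X_t)$, uses the cocycle property to turn each increment into a shift of $H(\om):=\sum_{|e|=1}|\xi\cdot\chi(\om,e)|^k$, controls the spatial average of $H$ by the maximal ergodic theorem, and finally bounds $E_0^\om[R_t^{d/k}]$ via the weighted heat-kernel estimate of Lemma~\ref{L:gradHK:aux1}. Thus the paper only consumes moment bounds on $\nabla\phi$ (equivalently $D\phi_\xi^0$), not on $\phi_\xi^0$ itself.

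Your approach is shorter and avoids both the path decomposition and the maximal ergodic theorem; it exploits the full strength of Proposition~\ref{C:corrector}(b), namely $\phi_\xi^0\in L^{2n}(\prob)$, which is available here because $d\geq 3$. The paper's approach is more robust in that it would go through whenever one has good moment bounds on the gradient of the corrector but not on the corrector itself (this is why the paper's degenerate proof is written that way, while in Appendix~\ref{app:mainunif} for the uniformly elliptic case the paper uses an argument closer in spirit to yours, bounding $\sum_x p(t,0,x)|\phi_i(\om,x)|$ directly). In the present setting both strategies are available, and yours is the more economical one.
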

\begin{proof}
We denote by $d$ be the natural graph distance on $\bbZ^d$, i.e.\ $d(x, y)$ is the minimal length of a path between $x$ and $y$ and with a slight abuse of notation  we set $B(r) := \{y \in \bbZ^d \, | \, d(0, y) \leq r\}$. Fix exponents $\alpha\in(d,d+2\delta)$, $k\in\mathbb N$ such that $d+2d/k<\alpha$ and $\varepsilon\in (0,\delta)$. Hence, with $m$ denoting the weight of \eqref{def:m_alpha}, we get by appealing to Lemma~\ref{L:gradHK:aux1} the estimate
\begin{align*}
  \sum_{y \in \bbZ^d} p^\om(t,0,y) \, d(0,y)^{\frac dk} 
   & \leq \; \Big( \sum_{y \in \bbZ^d} p^\om(t,0,y)^2 \, m(t,y)^\alpha  \Big)^{1/2} \, \Big( \sum_{y \in \bbZ^d} d(0,y)^{\frac{2d}k} m(t,y)^{-\alpha}  \Big)^{1/2} \\
   &\leq \; \mathcal{X}(\om) \, (t+1)^{-\frac d 4 + \frac \varepsilon 2  +\frac \alpha 4}\;\leq \; \mathcal{X}(\om) \, (t+1)^{\delta},
\end{align*}
where $\cX$ denotes a random constant (with arbitrarily hight moments provided $M(p,q)<\infty$ for $p$ and $q$ sufficiently large). Consider $R_t:=d(0,X_t)$ and note that
\begin{equation*}
   E_0^\om \big[ R_t^{\frac d k} \big] \; = \; \sum_{y \in \bbZ^d} p^\om(t,0,y) \, d(0,y)^{\frac d k}\;\leq\;\mathcal{X}(\om) \, (t+1)^{\delta}.
 \end{equation*}
Now, let $(z_0, \ldots, z_{R_t})$ denote a nearest-neighbour path connecting $z_0=0$ and $z_{R_t}=X_t$.  Then, by the cocycle property (cf.\ Remark~\ref{rem:cocycle}),
\begin{align*}
 \big|\xi \cdot \chi(\om,X_t) \big|^k \; \leq \; \sum_{i=0}^{R_t-1}  \big|\xi \cdot \chi(\om,z_{i+1})- \xi\cdot \chi(\om,z_{i}) \big|^k \;=\; \sum_{i=0}^{R_t-1} \big|\xi \cdot  \chi(\tau_{z_i} \om, z_{i+1}-z_i) \big|^k.
\end{align*}
Hence, with $H(\om):=\sum_{|e|=1}  \big| \xi \cdot \chi(\om,e) \big|^k$ and $H^*(\omega):=\sup_{R>0}\frac 1 {\# B(R)} \sum_{y \in B(R)} H(\tau_y \om)$ (the associated maximal function), we get
\begin{align*}
   \big|\xi \cdot \chi(\om,X_t) \big| \; \lesssim \;   R_t^{\frac d k}  \bigg( \frac 1 {\# B(R_t)} \sum_{y \in B(R_t)} H(\tau_y \om) \bigg)^{\frac 1 k}\leq R_t^{\frac d k}(H^*)^\frac1k\leq \mathcal X(\omega)(H^*)^\frac1k(t+1)^\delta.
\end{align*}
Note that $\mean[H^{\frac{2n}{k}}] <\infty$ for any $2n>k$ provided $M(p,q)<\infty$ for $p$ and $q$ sufficiently large (depending on $n$ and $k$), see Proposition~\ref{C:corrector} (b). Hence, by the maximal ergodic theorem (cf.\ e.g.\ \cite[ Theorem~6.3 in Chapter~1 and Chapter~6]{Kr85}), we have $\mean\left[(H^*)^{\frac{2n} k}\right]\lesssim\mean\Big[H^{\frac{2n} k} \Big] < \infty$, and thus the claimed statement follows.
 \end{proof}

\subsection{Proof of Theorem \ref{thm:main}}

Let  $\delta \in (0, \frac 1  {10})$ be arbitrary and let $p$ and $q$ be such that the statements in Propositions~\ref{prop:beM} and \ref{prop:conv_corr} hold. Recall that $X_t=M_t-\chi(\om,X_t)$, $t\geq 0$, for $\bbP$-a.e.\ $\om$. Hence, for any $x\in \bbR$,
\begin{align*}
   P_0^\om\big[\xi \cdot X_t \leq  x \sqrt{t}  \big]\; \leq \; P_0^\om\big[\xi \cdot M_t \leq  (x- t^{-\frac 1 5}) \sqrt{t}  \big]+ P_0^\om\big[|\xi \cdot \chi(\om,X_t)| > t^{\frac 3 {10} } \big].
\end{align*}
Further,  Proposition~\ref{prop:conv_corr}  gives that
\begin{align*}
E_0^\om \big[|\xi \cdot \chi(\om,X_t)|\big] \; \lesssim \; \cX(\om) \, (t+1)^\delta,
\end{align*}
and an application of \v{C}eby\v{s}ev's inequality yields
\begin{align*}
P_0^\om\big[|\xi \cdot \chi(\om,X_t)| > t^{\frac 3 {10}} \big] \; \lesssim \;  \cX(\om) \, (t+1)^{-\frac 1 5}.
\end{align*}
Recall that $\Phi$ denotes the distribution function of the standard normal distribution. Since $\Phi'$ is bounded by $1$, we have
\begin{align*}
 \sup_{x \in \bbR} \Big|  \Phi(\tfrac {x+t^{-\frac 1 5}} {\sigma_\xi})  -\Phi(\tfrac x {\sigma_\xi})\Big| \; \leq \; \sigma_\xi^{-1} \, t^{-\frac 1 5 },
\end{align*}
and we get
\begin{align*}
   P_0^\om\big[\xi \cdot X_t \leq  x \sqrt{t}  \big] - \Phi(\tfrac x {\sigma_\xi}) 
 \;  \leq \; &
   \Big | P_0^\om\big[\xi \cdot M_t \leq  (x+ t^{-\frac 1 5}) \sqrt{t}  -  \Phi(\tfrac {x+t^{-\frac 1 5}} {\sigma_\xi})\big] \Big| \\
   & +    \Big|  \Phi(\tfrac {x+t^{-\frac 1 5}} {\sigma_\xi})  -\Phi(\tfrac x {\sigma_\xi})\Big|+
    P_0^\om\big[|\xi \cdot \chi(\om,X_t)| > t^{\frac 3 {10}} \big] \\
    \lesssim \; & \sup_{y\in \bbR} \Big| P_0^\om\big[\xi \cdot M_t \leq  y  \sqrt{t} \big]  -  \Phi(\tfrac {y} {\sigma_\xi}) \Big| + \cX(\om) \, (t+1)^{-\frac 1 5}.
   \end{align*}
On the other hand, since
\begin{align*}
   P_0^\om\big[\xi \cdot M_t  \leq  (x- t^{-\frac 1 5}) \sqrt{t}  \big] \; \leq \; P_0^\om\big[\xi \cdot X_t \leq  x \sqrt{t}  \big]+ P_0^\om\big[|\xi \cdot \chi(\om,X_t)| > t^{\frac 3 {10}} \big],
\end{align*}
we can derive a similar lower bound by using the same arguments. Thus,
\begin{align*}
\Big| P_0^\om\big[\xi \cdot X_t \leq  x \sqrt{t}  \big] - \Phi(\tfrac x {\sigma_\xi}) \Big|
 \; \lesssim \; \sup_{y\in \bbR} \Big | P_0^\om\big[\xi \cdot M_t \leq  y  \sqrt{t} \big]  -  \Phi(\tfrac {y} {\sigma_\xi}) \Big| + \cX(\om)  (t+1)^{-\frac 1 5}
\end{align*}
and the same estimate holds if we replace $P_0^\om$ by $\prob_0$.
Hence, the claim follows from Proposition~\ref{prop:beM}.
\qed

\section{Examples} \label{sec:examples}
In this section we discuss a class of environments satisfying Assumption~\ref{ass:sg} which are related to the Ginzburg Landau $\nabla \phi$-interface model (see \cite{Fu05}). This is a well known model for an interface separating two pure thermodynamical phases. We first explain a slightly more general construction and then revisit that specific class of environments at the end of the discussion. Our starting point is a shift-invariant probability measure $\tilde\mu$ on $\tilde\Omega:=\bbR^{\bbZ^d}$.  We suppose that for any $\tilde u\in C^1(\tilde\Omega,\bbR)$ the measure $\tilde\mu$ satisfies the  Brascamp-Lieb inequality
\begin{equation}\label{example:BL}
  \var_{\tilde\mu}(\tilde u)\leq\frac{1}{\rho}\mean_{\tilde\mu}\Big[\sum_{x\in\bbZ^d}\tilde\partial\tilde u(x) \, \big(G*\tilde\partial\tilde u\big)(x)\Big],
\end{equation}
where $G$ denotes the Green's function associated with the discrete Laplacian on $\bbZ^d$, and $\tilde\partial\tilde u$ denotes the $\ell^2(\bbZ^d)$-gradient of $\tilde u$, which for sufficiently smooth $\tilde u$ is characterised by 
\begin{equation*}
  \tilde\partial\tilde u(\tilde\omega,x)=\lim\limits_{h\to 0}\frac{\tilde u(\tilde\omega+h\delta_x)-\tilde u(\tilde\omega)}{h}\qquad \tilde\omega\in\tilde\Omega,\,  x\in\bbZ^d,
\end{equation*}
with Dirac function $\delta_x:\bbZ^d\to\{0,1\}$. Further, let $\bar\mu$   be the probability measure on $\bar\Omega:=\bbR^{E_d}$  defined as the pushforward of $\tilde\mu$ under the transformation
\begin{equation*}
  T:\tilde\Omega\to\bar\Omega,\qquad \tilde\omega\mapsto\nabla\tilde\omega,
\end{equation*}
(see Section~\ref{sec:notation} for the definition of the discrete gradient $\nabla$). We denote by $\partial\bar u$ the $\ell^2(E_d)$-gradient of $\bar u$, which for sufficiently smooth $\bar u$ is characterised by 
\begin{equation*}
  \partial\bar u(\bar\omega,e)=\partial_e\bar u(\bar\omega)=\lim\limits_{h\to 0}\frac{\bar u(\bar\omega+h\delta_e)-\bar u(\bar\omega)}{h}\qquad \bar\omega\in\bar\Omega, \, e\in E_d,
\end{equation*}
with Dirac function $\delta_e:E_d\to\{0,1\}$. It turns out that for any $\bar u\in C^1(\bar\Omega,\bbR)$ we have the spectral gap estimate
\begin{equation} \label{eq:sgbar}
  \var_{\bar\mu}(\bar u)\;\leq \; \frac{1}{\rho}\mean_{\bar\mu}\Big[\sum_{e\in E_d}|\partial_e\bar u|^2\Big].
\end{equation}
This can be seen as follows. Setting $\tilde u(\tilde\omega):=\bar u(T\bar\omega)$ we get from the product rule the relation
\begin{equation*}
  \tilde\partial\tilde u(x)=\sum_{e\in E_d\atop \oe=x}\partial\bar u(e)-\sum_{e\in E_d\atop \ue=x}\partial\bar u(e)=\nabla^*\partial\bar u(x),
\end{equation*}
where $\nabla^*$ denotes the (negative) discrete divergence operator, see Section~\ref{sec:notation}. Hence, 
\begin{align*}
  \sum_{x\in\bbZ^d}\tilde\partial\tilde u(x) \, (G*\tilde\partial\tilde u)(x)
\; = \;
  \sum_{x,y\in\bbZ^d}\nabla^*\partial\bar u(x) \, 
  G(x-y) \, \nabla^*\partial\bar u(y)
\; = \;
  \sum_{e\in E_d}\partial\bar u(e)\, \nabla v(e),
\end{align*}
where $v:\bbZ^d\to\bbR$ denotes the convolution
\begin{equation*}
  v(x):=\sum_{y\in\bbZ^d} G(x-y)\, \nabla^*\partial\bar u(y).
\end{equation*}
Recall that $G$ denotes the discrete Green function for $\nabla^*\nabla$. In particular, $v$ solves $\nabla^*\nabla v=\nabla^*\partial\bar u$,  and a standard energy estimate yields  $\sum_{e\in E_d}|\nabla v|^2\leq \sum_{e\in E_d}|\partial\bar u(e)|^2$. We conclude that
\begin{eqnarray*}
  \sum_{x\in\bbZ^d}\tilde\partial\tilde u(x)(G*\tilde\partial\tilde u)(x) \;\leq \; 
  \sum_{e\in E_d}|\partial\bar u(e)|^2,
\end{eqnarray*}
which combined with \eqref{example:BL} yields the spectral gap estimate in \eqref{eq:sgbar}. 

Finally, we define the probability measure $\prob$ on $\Omega=(0,\infty)^{E_d}$ as the pushforward of $\bar\mu$ under the (nonlinear) transformation
\begin{align*}
  \Lambda:  \bar\Omega\rightarrow \Omega, \quad \Lambda(\bar\omega)(e):=\lambda(\bar\omega(e)),
\end{align*}
with $\lambda:\bbR\to(0,\infty)$ denoting a Lipschitz function with global Lipschitz constant $c_\lambda>0$. With any $u\in L^2(\Omega,\prob)$ we may associate $\bar u\in L^2(\bar\Omega,\bar\mu)$ via $\bar u:=u\circ\Lambda$. Then the chain rule yields
\begin{equation*}
  \big|\partial_e\bar u\big|\; = \; \big|\partial_e (u\circ\Lambda)\big|\; \leq \;  c_\lambda \big(|\partial_e u|\circ\Lambda\big),
\end{equation*}
and we thus obtain the spectral gap estimate  in Assumption~\ref{ass:sg} in form of
\begin{align*}
  & \mean\big[(u-\mean[u])^2\big]\leq \frac{c^2_\lambda}{\rho}\sum_{e\in E_d} \mean\Big[ |\partial_{e}u|^2  \Big].
\end{align*}

Now we explain the link to the Ginzburg Landau $\nabla\phi$ interface model. In $d\geq 3$ consider an interface  described by a collection of random height variables $\varphi\in \tilde \Omega$ sampled from a Gibbs measure $\mu$ formally given by
\begin{align*}
  \mu(d\varphi)= \frac 1 Z \exp(-H(\varphi)) \, \prod_{x\in \bbZ^d} d \varphi(x),
\end{align*}
with formal Hamiltonian
\begin{align*}
H(\varphi)= \sum_{e\in E_d} V(\nabla \varphi(e)),
\end{align*}
and potential function $V\in  C^2(\bbR; \bbR_+)$, which we suppose to be even and strictly convex with $c_- \leq  V'' \leq c_+$ for some $0 < c_- \leq  c_+ < \infty$. Note that in the special case $V(x)=\frac 1 2 x^2$ the field  $\phi=\{\phi(x); x\in \bbZ^d \}$ becomes a \emph{discrete Gaussian free field}.  For more details on the rigorous definition, which is based on taking the thermodynamical limit of Gibbs measures on finite volume approximations of the infinite lattice $\bbZ^d$, see
\cite[Section~4.5]{Fu05}. 
Then, thanks to the strict convexity we have the Brascamp-Lieb inequality (see \cite{BL76}, cf.\ also \cite{NS97})
\begin{align*}
  \var_\mu(F) & \leq c_-^{-1}  \mean_\mu \Big[ \sum_{x\in\bbZ^d}\tilde\partial F(x) \, \big(G * \tilde\partial F\big)(x)\Big], \qquad F\in C^1(\tilde\Omega).
\end{align*}
In particular, \eqref{example:BL} holds for $\mu$ and the above considerations show that an environment with random conductances of the form $\{\om(e)=\lambda(\nabla \phi(e)), e\in E_d\}$ for any positive, even, globally Lipschitz  function $\lambda\in C^1(\bbR)$ satisfies  Assumption~\ref{ass:sg}. 

As a further consequence from the Brascamp-Lieb inequality it is known that exponential moments for $\nabla \phi(e)$ exist (cf.\ \cite{Fu05, NS97}). Thus, the environment  $\{\om(e)=\lambda(\nabla \phi(e)), e\in E_d\}$ with $\lambda$ as above also satisfies $M(p,q)<\infty$ for all $p,q\in [1,\infty)$.

\subsubsection*{Acknowledgment}
We thank Jean-Dominique Deuschel, Jean-Christophe Mourrat and Martin Slowik for useful discussions and valuable comments. 
SA was partially supported by the German Research Foundation in the Collaborative Research Center 1060 ``The Mathematics of Emergent Effects'', Bonn.
SN was supported by the DFG in the context of TU Dresden’s Institutional Strategy ``The Synergetic University''.

\appendix

\section{The uniformly elliptic case - Proof of Theorem~\ref{thm:mainunif}}
\label{app:mainunif}

In this section we discuss the uniformly elliptic case and establish the improved rates in the Berry-Esseen Theorem as stated in Theorem~\ref{thm:mainunif}. We first recall a deterministic heat kernel estimate, see e.g.\ \cite{GNO15} for a self-contained proof.
\begin{lemma}\label{app:L1}
  Let $d \geq 2$ and assume that uniform ellipticity holds, i.e.~$M(p,q)<\infty$ for $p=q=\infty$. Then, for all $\alpha\geq 0$ there exists a constant $c=C(d,M(\infty,\infty),\alpha)$ such that for all $t\geq 0$ we have
  \begin{align*}
  p(t,y)  & \;\leq \;  c \, (t+1)^{-\frac{d}{2}} \, m(t,y)^{-2\alpha},\\
    \bigg(\sum_{y\in \bbZ^d} m(t,y)^{2\alpha}\, \big|\nabla p(t,y)\big|^2 \bigg)^{\!\frac{1}{2}} & \; \leq \; c\, (t+1)^{-(\frac{d}{4}+\frac{1}{2})}.
  \end{align*}
\end{lemma}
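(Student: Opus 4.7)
The proof will mirror the arguments of Section~\ref{sec:hk}, but uniform ellipticity collapses both the $\varepsilon$-loss and all random constants into deterministic ones. My plan is to retrace the proofs of Lemma~\ref{L:ondiagHK}, Lemma~\ref{L:gradHK:aux1} and Lemma~\ref{L:gradHK} in this simpler setting; no spectral gap or probabilistic input is used, and the lemma is purely a deterministic PDE statement.

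For the pointwise on-diagonal bound, I will first replace Lemma~\ref{L:ondiagHK} by its uniformly elliptic counterpart $\sum_y p(t,y)^2 \leq c(t+1)^{-d/2}$ with deterministic $c$, which follows from a standard Nash inequality (the lower bound $\omega \geq c_- > 0$ comparing the Dirichlet form with that of the simple random walk). Next I will repeat the energy computation leading to \eqref{eq:st0001}; since $\mu(y) \leq c$ under $M(\infty,\infty)<\infty$, the H\"older step in the proof of Lemma~\ref{L:gradHK:aux1} is not needed, and one reads off the weighted bound
\begin{equation*}
\sum_{y\in\bbZ^d} \big(|y|+1\big)^{2\alpha} \, p(t,y)^2 \;\leq\; c \, (t+1)^{-d/2+\alpha}
\end{equation*}
with $\varepsilon = 0$. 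The conversion from $(|y|+1)^{2\alpha}$ to $m(t,y)^{2\alpha}$ is routine as in Step~1 of the proof of Lemma~\ref{L:gradHK}. Finally, the semigroup decomposition $p(2t,0,z) = \sum_y p(t,0,y)\, p(t,y,z)$ combined with Cauchy--Schwarz and the triangle inequality for the weight, exactly as in Step~2 of the proof of Lemma~\ref{L:gradHK}, yields the pointwise bound $p(t,y) \leq c(t+1)^{-d/2} m(t,y)^{-\alpha}$; replacing $\alpha$ by $2\alpha$ gives the first estimate.

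For the gradient bound, the plan is first to obtain the time-averaged estimate
\begin{equation*}
\frac{1}{T}\int_T^{2T}\sum_{y\in\bbZ^d} m(t,y)^{2\alpha}\, \big|\nabla p(t,y)\big|_\bomega^2\,dt \;\leq\; c \, (T+1)^{-d/2-1+\alpha}
\end{equation*}
by integrating \eqref{eq:st0001} over $(T,2T)$ and plugging the (now deterministic) weighted bound for $p$ into the error term, exactly as in the proof of \eqref{eq:weight2}. The upgrade to a pointwise-in-$t$ statement then proceeds via the semigroup decomposition of Step~3 in the proof of Lemma~\ref{L:gradHK}: starting from
\begin{equation*}
\big|\nabla p(t,y)\big|^2 \;\lesssim\; \frac{1}{t}\int_{t/3}^{2t/3}\sum_{z\in\bbZ^d} p(t-s,0,z)\, \big|\nabla p(s,z,y)\big|^2\,ds,
\end{equation*}
I will use the pointwise bound on $p(t-s,0,z)$ already proved to extract a factor $m(t-s,z)^{-(d+1)}$ and then invoke the time-averaged gradient bound. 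Uniform ellipticity gives $|\cdot|_\bomega \asymp |\cdot|$, so no moment conditions are needed to pass between the two norms.

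The hard part will essentially not be hard: every place where Section~\ref{sec:hk} invokes H\"older's inequality against a moment of $\omega$ or $\omega^{-1}$ collapses, and every random constant $\mathcal{X}$, $\mathcal{Y}$, $\mathcal{Z}_t$ is replaced by a deterministic constant depending only on $d$, $\alpha$ and $M(\infty,\infty)$. The only point requiring a little attention is bookkeeping the weight: one must verify that the conversion $(|y|+1)^{2\alpha} \rightsquigarrow m(t,y)^{2\alpha}$ (a decreasing-in-$t$ weight) does not cost any extra powers of $t$, which follows from the elementary inequality $m(t,y) \leq c(1+|y|)$ and interpolation with the $\alpha=0$ case.
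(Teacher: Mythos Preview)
The paper does not give its own proof of this lemma; it simply cites \cite{GNO15} for a self-contained proof. Your proposal to specialize the arguments of Section~\ref{sec:hk} to the uniformly elliptic case is exactly the right thing to do, since those arguments are themselves the degenerate-elliptic adaptation of the \cite{GNO15} proof; running them backwards with $\mu,\nu$ bounded and $\varepsilon=0$ recovers the cited argument. One small slip: in your displayed time-averaged gradient estimate you mix the weight $m(t,y)^{2\alpha}$ with the exponent $-\tfrac{d}{2}-1+\alpha$, which belongs to the unnormalised weight $(|y|+1)^{2\alpha}$ (cf.\ \eqref{eq:weight2} versus \eqref{eq:gradHK:pf002}); with the $m$-weight the exponent should be $-\tfrac{d}{2}-1$, but this is only a bookkeeping issue and does not affect the argument.
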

In the uniformly elliptic case and under the assumption of a spectral gap, in \cite{GO11,GO12,GNO15,BMN17} moment bounds on the corrector have been obtained. The following lemma additionally states that in $d=2$ the extended corrector grows logarithmically. For a proof see  \cite[Theorem~4.8]{Nlecture} (see also \cite{GNOreg} where the case of a continuum system is treated).
\begin{lemma}[Bounds on the corrector]\label{app:L2}
  Let $d \geq 2$ and suppose that Assumption~\ref{ass:sg} and uniform ellipticity hold, i.e.\ $M(p,q)<\infty$ for $p=q=\infty$. Then the extended correctors $(\phi_i,\sigma_i)$ exist in the sense of Proposition~\ref{C:corrector} (a) and for all $n\in\mathbb N$ and all $x\in\mathbb Z^d$ we have
  \begin{align*}
    \mean\Big[
    \big|\phi_i(x)\big|^{2n}+\big|\sigma_i(x)\big|^{2n}\Big]^{\frac{1}{2n}}
    & \; \leq \; c
    \begin{cases}
      \log^\frac12(|x|+1)& \text{if $d=2$,}\\
      1& \text{if $d\geq 3$},
    \end{cases}\\
    \mean\Big[
    \big|\nabla\phi_i\big|^{2n}+\big|\nabla\sigma_i\big|^{2n}\Big]^{\frac{1}{2n}}
    \; & \leq \; c
  \end{align*}
  with constant $c=c(d,\rho,M(\infty,\infty),n)$.
\end{lemma}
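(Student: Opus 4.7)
The plan is to adapt the construction and estimates from the proof of Proposition~\ref{C:corrector} to the uniformly elliptic case, exploiting the optimal, deterministic gradient heat kernel bound of Lemma~\ref{app:L1}. The key improvement is that Theorem~\ref{T:decay} upgrades, following \cite{GNO15}, to
\[
  \mean\!\big[(P_t D^*F)^{2n}\big]^{\frac{1}{2n}} \; \lesssim \; (t+1)^{-(\frac{d}{4}+\frac{1}{2})} \sum_{e \in E_d} \mean\!\big[|\partial_e F|^{2n}\big]^{\frac{1}{2n}},
\]
with $\e = 0$ and with the exponent $2n$ in place of $8n$: the interpolation step in Lemma~\ref{lem:cacc} becomes unnecessary when $\bomega^{-1}$ is deterministically bounded, and the random factor $\mathcal Z_t$ in Proposition~\ref{P:HK} is replaced by a deterministic constant via Lemma~\ref{app:L1}.

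For the existence of $\phi_i$ and the uniform bound on $\nabla\phi_i$, I would follow Step~1 of the proof of Proposition~\ref{C:corrector} with $F = -\bomega(0)e_i$. For $d\geq 3$, the Laplace transform $\phi_i^0 := \int_0^\infty P_tD^*F\,dt$ converges in $L^{2n}(\Omega)$ since $\tfrac{d}{4}+\tfrac{1}{2} > 1$. For the gradient in any dimension $d \geq 2$, a Caccioppoli identity of the form $\mean[|DP_tD^*F|^{2n}] \lesssim -\tfrac{d}{dt}\mean[|P_tD^*F|^{2n}]$ (which in the uniformly elliptic setting does not require the $\theta > 1$ interpolation of Lemma~\ref{lem:cacc}) yields the improved decay $\|DP_tD^*F\|_{L^{2n}(\Omega)} \lesssim (t+1)^{-(\frac{d}{4}+1)}$, whose time integral converges already in $d = 2$. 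Consequently $D\phi_i^0$ is well-defined in $L^{2n}(\Omega)$ and the stationary gradient $\nabla\phi_i(\om,x) = D\phi_i^0(\tau_x\om)$ satisfies the claimed moment bound.

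The pointwise bound on $\phi_i$ splits by dimension. For $d\geq 3$ it follows immediately from $\phi_i(\om,x) = \phi_i^0(\tau_x\om) - \phi_i^0(\om)$ and the moment bound on $\phi_i^0$. For $d = 2$, where $\phi_i^0$ fails to exist as a stationary object because the integrand decays only like $(t+1)^{-1}$, the plan is to work with a massive approximation: define $\phi_{i,T}^0$ as the unique mean-zero solution of $(T^{-1} + D^*\bomega(0)D)\phi_{i,T}^0 = D^*F$ and set $\phi_{i,T}(\om,x) := \phi_{i,T}^0(\tau_x\om) - \phi_{i,T}^0(\om)$. Using the spectral gap together with a Green's function representation of the sensitivity $\partial_e\phi_{i,T}^0$, one obtains the uniform-in-$T$ estimate $\mean[|\phi_{i,T}(\om,x)|^{2n}]^{1/(2n)} \lesssim \log^{1/2}(|x|+1)$, after which the passage $T\uparrow \infty$ (which works because the gradients $\nabla\phi_{i,T}$ converge in $L^{2n}(\Omega)$ by Step~2) defines $\phi_i$. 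The careful tracking of the borderline logarithmic correction in dimension two is the main technical obstacle.

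Finally, for $\sigma_i$ I would rerun Step~3 of the proof of Proposition~\ref{C:corrector}, noting that statement (c) there was only stated for $d \geq 3$ because the suboptimal estimate of Corollary~\ref{L:AG} was used. Under uniform ellipticity, the optimal annealed Green's function estimate $\mean[|\nabla\nabla G(x,0)|^n]^{1/n} \lesssim (|x|+1)^{-d}$ holds for every $n\in\bbN$ by \cite{MO15}. This allows the sensitivity estimate of Step~2 in that proof to be run with decay exponent $\gamma = d$, so that Young's convolution inequality in Substeps~3.1 and 3.2 applies for any $s > 1$. Plugging in the Green's-function-like weights from \eqref{eq:pfcorr:7a} and \eqref{eq:pfcorr:7}, and tracking the borderline exponent $r = d/(d-1)$ (which equals $2$ for $d = 2$), then produces the $\log^{1/2}(|x|+1)$-rate for $\sigma_i$ in dimension two and the uniform bound for $d \geq 3$.
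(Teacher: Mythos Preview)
The paper does not give its own proof of Lemma~\ref{app:L2}; it simply cites \cite[Theorem~4.8]{Nlecture} and \cite{GNOreg}. Your proposal to rerun the proof of Proposition~\ref{C:corrector} with the optimal uniformly elliptic inputs (Lemma~\ref{app:L1}, the semigroup decay \eqref{eq:GNO15} from \cite{GNO15}, and the optimal annealed Green's function estimate of \cite{MO15}) is precisely the strategy carried out in those references, so for $d\geq 3$ your sketch is correct and matches the intended argument.

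There is, however, a concrete gap in the two-dimensional case that you gloss over. For $\sigma_i$ you claim that the optimal decay $\gamma=d$ in \eqref{eq:pfcorr3b} lets Young's inequality in Step~2 run for any $s>1$, and that one then reaches the borderline exponent $r=\tfrac{d}{d-1}=2$ in Substep~3.1 to produce the $\log^{1/2}$ rate. But the Young constraint $1+\tfrac12=\tfrac1s+\tfrac1r$ together with $s>1$ forces $r<2$ strictly; the borderline $r=2$ would require $s=1$, i.e.\ $g\in\ell^1(\bbZ^2)$, and since $g(z)\lesssim(|z|+1)^{-2}$ is not summable in $d=2$ this endpoint is not available. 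Running the argument with $r<2$ only gives $\mean[|\sigma_i(x)|^{2p}]^{1/(2p)}\lesssim (|x|+1)^\theta$ for every $\theta>0$, not the logarithm. The same borderline failure affects your massive-approximation scheme for $\phi_i$ if you try to close it through the same convolution estimate. To recover the sharp $\log^{1/2}(|x|+1)$ one has to bypass the generic Young inequality and exploit the explicit structure of both $\nabla f$ in \eqref{eq:pfcorr:7a} and the sensitivity kernel $g$ simultaneously (this is how \cite{Nlecture} and \cite{GNOreg} handle $d=2$), or alternatively appeal to large-scale regularity as in \cite{GNOreg}. You correctly flag the two-dimensional logarithm as ``the main technical obstacle'', but the mechanism you propose does not actually reach it.
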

For any direction $\xi\in\bbR^d$ with $|\xi|=1$, we define the associated corrector $\phi_\xi$, the harmonic coordinate $\psi_\xi$, the op\'erateur carr\'e du champ $\Gamma^\omega$, and $g_\xi$ as in Proposition~\ref{P:decay}, which we can now refine in the uniformly elliptic case.
\begin{lemma}\label{app:L3}
  Let $d \geq 2$. Suppose that Assumption~\ref{ass:sg} and uniform ellipticity hold, i.e.\ $M(p,q)<\infty$ for $p=q=\infty$. 
  Then there exists a constant
  $c=c(d,\rho, M(\infty,\infty))$ such that for all $t\geq 0$,
  \begin{align*}
   \mean\left[\Big(P_t \big(g_\xi-\mean[g_\xi]\big)\Big)^{\!2}\right]^{\frac12} \; \leq \; 
    c\, 
    \begin{cases}
      \Big(\frac{\log(t+1)}{t+1}\Big)^{\!\frac{1}{2}}& \text{if $d=2$},\\
      (t+1)^{-\frac{1}{2}}& \text{if $d\geq 3$}.
    \end{cases}
  \end{align*}
\end{lemma}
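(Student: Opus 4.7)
\textbf{Proof proposal for Lemma~\ref{app:L3}.} The plan is to mirror the proof of Proposition~\ref{P:decay}, replacing its two stochastic inputs (the random weighted gradient heat-kernel estimate of Proposition~\ref{P:HK} and the moment/growth bounds for the corrector from Proposition~\ref{C:corrector}) by their sharper uniformly elliptic deterministic counterparts, namely Lemma~\ref{app:L1} and Lemma~\ref{app:L2}. This removes the random prefactor $\mathcal Z_t$ and the loss $\varepsilon$, and gives the right scaling in any $d\geq 2$; the only remaining $\log$ will come from the $d=2$ growth of the corrector.

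First, I would invoke the divergence-form representation of Lemma~\ref{L:QV}, which gives $g_\xi-\mean[g_\xi]=\nabla^{*}H(\omega,0)$ with $H$ a pointwise expression in the conductances and the extended corrector $(\phi_\xi,\sigma_\xi)$. By the definition of $P_t$ and a discrete integration by parts,
\begin{equation*}
P_t(g_\xi-\mean[g_\xi])(\omega)=\sum_{y\in\bbZ^d}p^\omega(t,0,y)\,\nabla^{*}H(\omega,y)=\sum_{y\in\bbZ^d}\nabla p(t,y)\cdot H(\omega,y),
\end{equation*}
where $H(\omega,y):=H(\tau_y\omega,0)$. For fixed $\alpha>d$ (to be chosen just below), Cauchy--Schwarz in $y$ yields
\begin{equation*}
\Big(P_t(g_\xi-\mean[g_\xi])\Big)^{2}\;\leq\;\Big(\sum_{y}m(t,y)^{2\alpha}\big|\nabla p(t,y)\big|^{2}\Big)\Big(\sum_{y}m(t,y)^{-2\alpha}\big|H(\omega,y)\big|^{2}\Big).
\end{equation*}
The first factor is bounded by $c(t+1)^{-(d/2+1)}$ deterministically thanks to Lemma~\ref{app:L1}. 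Taking expectation and using stationarity then gives
\begin{equation*}
\mean\Big[\big(P_t(g_\xi-\mean[g_\xi])\big)^{2}\Big]^{1/2}\;\leq\;c\,(t+1)^{-(d/4+1/2)}\Big(\sum_{y\in\bbZ^d}m(t,y)^{-2\alpha}\mean\big[|H(\omega,y)|^{2}\big]\Big)^{1/2}.
\end{equation*}

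Next I would bound $\mean[|H(\omega,y)|^{2}]$ using Lemma~\ref{app:L2}. Since $H$ is a polynomial of degree $\leq 2$ in $\phi_\xi(y+e_i)$, $\sigma_\xi(y)$, $\nabla\phi_\xi(y)$ and $\bomega(y)$, the uniform $L^p$ bounds for $\nabla\phi_\xi$ and $\bomega$ (via uniform ellipticity) together with Cauchy--Schwarz in $\prob$ give, for every $y\in\bbZ^d$,
\begin{equation*}
\mean\big[|H(\omega,y)|^{2}\big]\;\leq\;c\,\Big(1+\mean[|\phi_\xi(y+e_i)|^{4}]^{1/2}+\mean[|\sigma_\xi(y)|^{4}]^{1/2}\Big)\;\leq\;c\begin{cases}\log(|y|+2)&\text{if }d=2,\\ 1&\text{if }d\geq 3.\end{cases}
\end{equation*}
For $d\geq 3$ one then picks any $\alpha>d/2$ and uses $\sum_y m(t,y)^{-2\alpha}\leq c(t+1)^{d/2}$ to arrive at the claimed rate $(t+1)^{-1/2}$. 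For $d=2$ the same choice of weight gives $\sum_y m(t,y)^{-2\alpha}\log(|y|+2)\leq c(t+1)\log(t+1)$ (the logarithm comes from summation over the ball of radius $\sqrt{t}$, where $m\asymp 1$; the tail is controlled because $\alpha>2$), and combining with the prefactor $(t+1)^{-1}$ produces the advertised rate $(\log(t+1)/(t+1))^{1/2}$.

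The argument is essentially a bookkeeping exercise, and I do not expect serious obstacles; the only point that requires mild care is the treatment of the $d=2$ case, where the logarithmic growth of $(\phi_\xi,\sigma_\xi)$ must be propagated through Lemma~\ref{L:QV} and balanced against the weight $m(t,y)^{-2\alpha}$ so that the final estimate matches the expected $(\log(t+1)/(t+1))^{1/2}$ scaling rather than a spurious $\log$ power.
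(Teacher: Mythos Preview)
Your proposal is correct and follows essentially the same route as the paper's proof: represent $g_\xi-\mean[g_\xi]$ in divergence form via Lemma~\ref{L:QV}, integrate by parts, apply weighted Cauchy--Schwarz, control the gradient heat-kernel factor deterministically by Lemma~\ref{app:L1}, and bound $\mean[|H(y)|^2]$ via the corrector moment bounds of Lemma~\ref{app:L2} (constant for $d\geq 3$, logarithmic for $d=2$). The only cosmetic point is that your initial requirement $\alpha>d$ on the weight exponent is stronger than needed; as you correctly note later, $2\alpha>d$ suffices for the weighted sums to scale like $(t+1)^{d/2}$ (times a logarithm in $d=2$).
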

\begin{proof}
  As in the proof of Proposition~\ref{P:decay} we deduce that $ g_\xi(\omega)-\mean[g_\xi]=\nabla^*H(\omega,0)$,
  where $H$ is defined in Lemma~\ref{L:QV}. Thus,
  \begin{equation*}
    I=\mean\bigg[\Big(P_t \big(g_\xi-\mean[g_\xi]\big)\Big)^{\!2}\bigg]^{\frac12}=\mean\bigg[\Big(\sum_{y\in\bbZ^d} p(t,0,y) \,\nabla^*H(y)\Big)^{\!2}\bigg]^{\frac12}.
  \end{equation*}
  An integration by parts (which is applicable thanks to Lemma~\ref{app:L2}) and an application of Lemma~\ref{app:L1} yield (for $\alpha=\frac d2+1$)
  \begin{eqnarray*}
    I&=&\mean\bigg[\Big(\sum_{y\in\bbZ^d}\nabla p(t,y)\cdot H(y)\Big)^{\!2}\bigg]^{\frac12}\\
    &\leq&\mean\bigg[\Big(\sum_{y\in\bbZ^d} \big|\nabla p(t,y)\big|^2 \, m(t,y)^{\alpha}\Big) \, \Big(\sum_{y\in\bbZ^d} \big|H(y)\big|^2 \, m(t,y)^{-\alpha}\Big)\bigg]^{\frac12}\\
    &\lesssim &(t+1)^{-(\frac{d}{4}+\frac{1}{2})} \, \mean\bigg[\Big(\sum_{y\in\bbZ^d} \big|H(y)\big|^2 m(t,y)^{-\alpha}\Big)\bigg]^{\frac12}.
  \end{eqnarray*}
  For $d\geq3$ the random field $H$ is stationary and has finite second moments, cf.~Lemma~\ref{app:L2}. We thus get the claimed estimate $I\leq c(t+1)^{-\frac12}$. For $d=2$, we infer from the definition of $H$ and the moment bounds in Lemma~\ref{app:L2} that $\mean\Big[|H(y)|^2\Big]    \leq c\log(|y|+1)$, and thus the claimed  estimate follows as well.
\end{proof}
From Lemma~\ref{app:L3} we obtain the following refinement of Proposition~\ref{prop:beM} (i).
\begin{prop} \label{app:L4}
  Let $d \geq 2$. Suppose that Assumption \ref{ass:sg} and uniform ellipticity hold, i.e.\ $M(p,q)<\infty$ for $p=q=\infty$. Then there exists a constant $c=c(d,\rho, M(\infty,\infty))$ such that for all $t\geq 0$, 
  \begin{align*}
    \sup_{x \in \bbR} \Big|  \prob_0 \big[\xi \cdot M_t \leq \sigma_\xi x \sqrt{t} \ \big] - \Phi(x)\Big|  \; \leq \;
    \begin{cases}
      \; c \, \big(\frac{\log(t+1)}{t+1}\big)^{\frac 1 {5}} & \text{if $d=2$,} \\
      \; c \, (t+1)^{-\frac 1 5} & \text{if $d\geq 3$.}
    \end{cases}
  \end{align*}
\end{prop}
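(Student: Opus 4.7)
The plan is to retrace the proof of Proposition~\ref{prop:beM}(i), replacing the degenerate-case semigroup estimate (Proposition~\ref{P:decay}) by its sharp uniformly elliptic counterpart Lemma~\ref{app:L3}, and invoking the corrector moment bounds of Lemma~\ref{app:L2} (which hold for all $n$) in lieu of Proposition~\ref{C:corrector}. The moment condition $p=q=\infty$ will make every step that previously required careful tracking of moment exponents trivial; the only genuinely new input is the sharper variance decay of the semigroup applied to the carr\'e du champ.

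First I would derive the improved analog of Proposition~\ref{prop:estV}(i): namely
\begin{equation*}
\mean\Big[E_0^\om\Big[\big|\tfrac{\langle \xi\cdot M\rangle_t}{t}-\sigma_\xi^2\big|^2\Big]\Big] \;\leq\; c\begin{cases}\tfrac{\log(t+1)}{t+1}&d=2,\\ (t+1)^{-1}&d\geq 3.\end{cases}
\end{equation*}
This follows verbatim from the calculation in the proof of Proposition~\ref{prop:estV}: using Corollary~\ref{cor:qV}, stationarity of the environment process, and symmetry/reversibility of $P_t$ in $L^2(\bbP)$, one obtains
\begin{equation*}
\mean E_0^\om\Big[\Big(\int_0^t G_\xi(\tau_{X_s}\om)\,ds\Big)^2\Big] \;=\; 2\int_0^t(t-s)\,\mean\big[(P_{s/2}G_\xi)^2\big]\,ds,
\end{equation*}
where $G_\xi=g_\xi-\mean[g_\xi]$. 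Plugging Lemma~\ref{app:L3} into the right-hand side and dividing by $t^2$ yields exactly the claimed bound; note that $\int_0^t s^{-1}\log(s+1)\,ds\asymp \log^2(t+1)$ in $d=2$, and the square of the rate from Lemma~\ref{app:L3} produces the $\log(t+1)/(t+1)$ appearing above.

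Next I would observe that Proposition~\ref{prop:estJ} carries over unchanged: in the uniformly elliptic setting its application to $n=4$ reduces, by the argument written in the paper, to finiteness of $\mean[\sum_{|y|=1}\om(0,y)|\psi_\xi(\om,y)|^4]$, which holds with room to spare since $\om\in L^\infty$ and $|\psi_\xi(\om,y)|\lesssim 1+|\nabla\phi_\xi|\in L^p$ for every $p<\infty$ by Lemma~\ref{app:L2}. Consequently
\begin{equation*}
E_0^\om\Big[\sum_{0\leq s\leq 1}|\Delta N_s|^4\Big] \;\lesssim\; t^{-1},
\end{equation*}
where $N_s\ldef(\xi\cdot M_{st})/(\sqrt{t}\,\sigma_\xi)$, deterministically after taking $\mean$.

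Finally I would apply the martingale Berry-Esseen bound of Theorem~\ref{thm:be_hb} with $n=2$ to $N_s$ under the annealed measure $\prob_0$:
\begin{equation*}
\sup_{x\in\bbR}\big|\prob_0[N_1\leq x]-\Phi(x)\big| \;\lesssim\; \Big(\mean E_0^\om\big[|\langle N\rangle_1-1|^2\big]+\mean E_0^\om\Big[\sum_{s\leq 1}|\Delta N_s|^4\Big]\Big)^{1/5}.
\end{equation*}
Substituting the two estimates above gives the rate $(\log(t+1)/(t+1))^{1/5}$ in $d=2$ and $(t+1)^{-1/5}$ in $d\geq 3$, as $\sigma_\xi$ is a positive deterministic constant so the rescaling $x\mapsto\sigma_\xi x$ does not affect the supremum. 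No step in this scheme is particularly hard; the only delicate point is the logarithmic book-keeping in $d=2$, which is controlled precisely because Lemma~\ref{app:L3} (itself resting on the growth estimate $\mean[|\sigma_i(x)|^{2n}]^{1/(2n)}\lesssim \log^{1/2}(|x|+1)$ from Lemma~\ref{app:L2}) captures the exact logarithmic correction inherited from the two-dimensional Green function.
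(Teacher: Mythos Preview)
Your approach is exactly the paper's: it states that the proof of Proposition~\ref{app:L4} is identical to that of Proposition~\ref{prop:beM}(i), with Lemma~\ref{app:L3} replacing Proposition~\ref{P:decay} in the derivation of the analogue of estimate~\eqref{eq:L2bound_G}. One small bookkeeping slip in your $d=2$ computation: you correctly note $\int_0^t (s+1)^{-1}\log(s+1)\,ds\asymp\log^2(t+1)$, but then write the intermediate bound as $\log(t+1)/(t+1)$ instead of the $\log^2(t+1)/t$ that this integral actually produces after dividing $2t\int_0^t\mean[(P_{s/2}G_\xi)^2]\,ds$ by $t^2$.
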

The proof is the same as the one for Proposition~\ref{prop:beM}.
The only difference is that we appeal to Lemma~\ref{app:L3} to improve estimate \eqref{eq:L2bound_G}.
With Lemma~\ref{app:L1} and Lemma~\ref{app:L2} at hand, we also obtain the following refinement of Proposition~\ref{prop:conv_corr}:
\begin{lemma}\label{app:L5}
  Let $d \geq 2$, suppose Assumption~\ref{ass:sg} and uniform ellipticity, i.e.~$M(p,q)<\infty$ for $p=q=\infty$. Then there exists a random variable $\mathcal X$ such that for all $t\geq 0$ and $n\in\mathbb N$,
  \begin{align*}
    \mean\Big[E_0^\om\Big[ \big|\xi \cdot \chi(\om,X_t) \big| \Big]^n\Big]^\frac1n \; \leq \;  c\begin{cases}
      \log^\frac12(t+1)& \text{if $d=2$},\\
      1& \text{if $d\geq 3$},
    \end{cases}
  \end{align*}
  where $c=c(d,\rho, M(\infty,\infty),n)$.
\end{lemma}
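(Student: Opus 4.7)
The plan is to proceed similarly to Proposition~\ref{prop:conv_corr} but to exploit the deterministic pointwise heat-kernel upper bound from Lemma~\ref{app:L1}, which in the uniformly elliptic setting replaces the random on-diagonal bound and allows one to avoid both the maximal-ergodic-theorem detour and the small loss $(t+1)^\delta$. Writing $\xi\cdot\chi(\om,y)=\phi_\xi(\om,y)-\phi_\xi(\om,0)$ with $\phi_\xi\ldef\sum_{i=1}^d\xi_i\phi_i$, I would start from
\begin{equation*}
E_0^\om\big[|\xi\cdot\chi(\om,X_t)|\big]\;=\;\sum_{y\in\bbZ^d}p^\om(t,0,y)\,\big|\xi\cdot\chi(\om,y)\big|,
\end{equation*}
fix $\alpha\geq 0$ with $2\alpha>d$, and insert the first bound of Lemma~\ref{app:L1} to obtain the pointwise-in-$\om$ estimate
\begin{equation*}
E_0^\om\big[|\xi\cdot\chi(\om,X_t)|\big]\;\leq\;c\,(t+1)^{-d/2}\sum_{y\in\bbZ^d}m(t,y)^{-2\alpha}\,\big|\xi\cdot\chi(\om,y)\big|.
\end{equation*}

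Next I would take $L^n(\prob)$-norms and apply Minkowski's inequality. Since $|\xi\cdot\chi(\om,y)|\leq\sum_i|\xi_i|\,(|\phi_i(\om,y)|+|\phi_i(\om,0)|)$, Lemma~\ref{app:L2} combined with stationarity yields $\mean[|\xi\cdot\chi(\om,y)|^n]^{1/n}\leq c$ for $d\geq 3$ and $\mean[|\xi\cdot\chi(\om,y)|^n]^{1/n}\leq c\log^{1/2}(|y|+1)$ for $d=2$. Consequently,
\begin{equation*}
\mean\Big[E_0^\om\big[|\xi\cdot\chi(\om,X_t)|\big]^n\Big]^{1/n}\;\leq\;c\,(t+1)^{-d/2}\sum_{y\in\bbZ^d}m(t,y)^{-2\alpha}\,h_d(y),
\end{equation*}
where $h_d(y)\ldef 1$ for $d\geq 3$ and $h_d(y)\ldef\log^{1/2}(|y|+1)$ for $d=2$.

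It remains to evaluate the lattice sums. Splitting at $|y|+1\asymp\sqrt{t+1}$ and using $m(t,y)^{-2\alpha}\asymp\min\{1,(|y|+1)^{-2\alpha}(t+1)^{\alpha}\}$ with $2\alpha>d$, a direct computation gives
\begin{equation*}
\sum_{y\in\bbZ^d}m(t,y)^{-2\alpha}\,h_d(y)\;\lesssim\;(t+1)^{d/2}\cdot\begin{cases}\log^{1/2}(t+1)&\text{if }d=2,\\ 1&\text{if }d\geq 3.\end{cases}
\end{equation*}
Multiplying by the prefactor $(t+1)^{-d/2}$ yields the claimed bound in both regimes.

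The argument is essentially routine; there is no substantive obstacle, since the Gaussian-type deterministic heat-kernel upper bound in Lemma~\ref{app:L1}, available thanks to uniform ellipticity, reduces the estimate to an elementary summation against the corrector moment bounds of Lemma~\ref{app:L2}, bypassing both the random-constant loss and the logarithmic penalty from the maximal inequality that were necessary in the degenerate case.
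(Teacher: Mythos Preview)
Your proposal is correct and follows essentially the same route as the paper's proof: both start from the heat-kernel representation of $E_0^\om[|\xi\cdot\chi(\om,X_t)|]$, insert the deterministic pointwise bound of Lemma~\ref{app:L1}, take $L^n(\prob)$-norms via Minkowski, apply the corrector moment bounds of Lemma~\ref{app:L2}, and finish with the elementary lattice sum $\sum_y m(t,y)^{-2\alpha}h_d(y)\lesssim (t+1)^{d/2}h_d(\sqrt{t+1})$. The paper additionally uses $\phi_i(\om,0)=0$ to write $\xi\cdot\chi(\om,x)=\phi_i(\om,x)$ directly (after reducing to $\xi=e_i$), but this is cosmetic.
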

\begin{proof}
  We only discuss the case $d=2$ since the argument for $d\geq 3$ is similar but simpler. W.l.o.g.\ let $\xi=e_i$. By Lemma~\ref{app:L1} we have
  \begin{equation*}
    E_0^\om\Big[ \big|\xi \cdot \chi(\om,X_t) \big| \Big]\; \lesssim \;\sum_{x\in\mathbb Z^d} p^\omega(t,0,x) \, \big|\phi_i(\omega,x)\big| \; \lesssim \; (t+1)^{-\frac{d}{2}}\sum_{x\in\mathbb Z^d}m(t,x)^{-2\alpha}\, \big| \phi_i(\omega,x)\big|,
  \end{equation*}
  and thus we get by Lemma~\ref{app:L2} (with $\alpha=\frac d2+1$) for any $n\in\mathbb N$,
  \begin{equation*}
    \mean\Big[E_0^\om\Big[ \big|\xi \cdot \chi(\om,X_t) \big| \Big]^n\Big]^{\frac1n} \; \lesssim \; (t+1)^{-\frac{d}{2}}\sum_{x\in\mathbb Z^d}m(t,x)^{-2\alpha}\log^\frac12(|x|+1) \, \lesssim \, \log^\frac12 (t+1),
  \end{equation*}
  which is the claim.
\end{proof}
With these estimates at hand,  Theorem~\ref{thm:mainunif} follows by the same argument as in the proof of Theorem~\ref{thm:main}.

\bibliographystyle{abbrv}
\bibliography{literature}

\begin{thebibliography}{10}

\bibitem{ABDH13}
S.~Andres, M.~T. Barlow, J.-D. Deuschel, and B.~M. Hambly.
\newblock {Invariance principle for the random conductance model}.
\newblock {\em Probab. Theory Related Fields}, 156(3-4):535--580, 2013.

\bibitem{ADS15}
S.~Andres, J.-D. Deuschel, and M.~Slowik.
\newblock {Invariance principle for the random conductance model in a
  degenerate ergodic environment}.
\newblock {\em Ann. Probab.}, 43(4):1866--1891, 2015.

\bibitem{ADS16}
S.~Andres, J.-D. Deuschel, and M.~Slowik.
\newblock {Harnack inequalities on weighted graphs and some applications to the
  random conductance model}.
\newblock {\em Probab. Theory Related Fields}, 164(3-4):931--977, 2016.

\bibitem{ADS16a}
S.~Andres, J.-D. Deuschel, and M.~Slowik.
\newblock {Heat kernel estimates for random walks with degenerate weights}.
\newblock {\em Electron. J. Probab.}, 21:Paper No. 33, 21, 2016.

\bibitem{ADS17}
{Andres, S.\ and Deuschel, J.-D.\ and Slowik, M.}
\newblock {Heat kernel estimates and intrinsic metric for random walks with
  general speed measure under degenerate conductances}.
\newblock {\em Preprint, available at arXiv:1711.11119}, {2017}.

\bibitem{armstrong2016c}
S.~{Armstrong} and P.~{Dario}.
\newblock {Elliptic regularity and quantitative homogenization on percolation
  clusters}.
\newblock {\em ArXiv e-prints}, Sept. 2016.

\bibitem{AKM17}
S.~Armstrong, T.~Kuusi, and J.-C. Mourrat.
\newblock {Quantitative stochastic homogenization and large-scale regularity}.
\newblock {\em Preprint, available at arXiv:1705.05300}, 2017.

\bibitem{armstrong2016b}
S.~Armstrong, T.~Kuusi, and J.-C. Mourrat.
\newblock {The additive structure of elliptic homogenization}.
\newblock {\em Invent. Math.}, 208(3):999--1154, 2017.

\bibitem{armstrong2016}
S.~N. {Armstrong} and J.-C. {Mourrat}.
\newblock {Lipschitz Regularity for Elliptic Equations with Random
  Coefficients}.
\newblock {\em Archive for Rational Mechanics and Analysis}, 219:255--348, Jan.
  2016.

\bibitem{armstrong2014}
S.~N. Armstrong and C.~K. Smart.
\newblock {Quantitative stochastic homogenization of convex integral
  functionals}.
\newblock {\em Ann. Sci. {\'E}c. Norm. Sup{\'e}r. (4)}, 49(2):423--481, 2016.

\bibitem{BBT16}
M.~Barlow, K.~Burdzy, and {\'A}.~Tim{\'a}r.
\newblock {Comparison of quenched and annealed invariance principles for random
  conductance model}.
\newblock {\em Probab. Theory Related Fields}, 164(3-4):741--770, 2016.

\bibitem{BellaFFOtto2016}
P.~Bella, B.~Fehrman, J.~Fischer, and F.~Otto.
\newblock {Stochastic homogenization of linear elliptic equations: higher-order
  error estimates in weak norms via second-order correctors}.
\newblock {\em SIAM J. Math. Anal.}, 49(6):4658--4703, 2017.

\bibitem{BellaFO2016}
P.~Bella, B.~Fehrman, and F.~Otto.
\newblock {A {L}iouville theorem for elliptic systems with degenerate ergodic
  coefficients}.
\newblock {\em Ann. Appl. Probab.}, 28(3):1379--1422, 2018.

\bibitem{BellaGiuntiOtto2015}
P.~Bella, A.~Giunti, and F.~Otto.
\newblock {Quantitative stochastic homogenization: local control of
  homogenization error through corrector}.
\newblock In {\em {Mathematics and materials}}, volume~23 of {\em {IAS/Park
  City Math. Ser.}}, pages 301--327. Amer. Math. Soc., Providence, RI, 2017.

\bibitem{BMN17}
J.~Ben-Artzi, D.~Marahrens, and S.~Neukamm.
\newblock {Moment bounds on the corrector of stochastic homogenization of
  non-symmetric elliptic finite difference equations}.
\newblock {\em Communications in Partial Differential Equations},
  42(2):179--234, 2017.

\bibitem{Bi11}
M.~Biskup.
\newblock {Recent progress on the random conductance model}.
\newblock {\em Probab. Surv.}, 8:294--373, 2011.

\bibitem{BL76}
H.~J. Brascamp and E.~H. Lieb.
\newblock {On extensions of the {B}runn-{M}inkowski and
  {P}r{\'e}kopa-{L}eindler theorems, including inequalities for log concave
  functions, and with an application to the diffusion equation}.
\newblock {\em J. Functional Analysis}, 22(4):366--389, 1976.

\bibitem{dBM15}
P.~de~Buyer and J.-C. Mourrat.
\newblock {Diffusive decay of the environment viewed by the particle}.
\newblock {\em Electron. Commun. Probab.}, 20:no. 23, 12, 2015.

\bibitem{dMFGW89}
A.~{De Masi}, P.~A. Ferrari, S.~Goldstein, and W.~D. Wick.
\newblock {An invariance principle for reversible {M}arkov processes.
  {A}pplications to random motions in random environments}.
\newblock {\em J. Statist. Phys.}, 55(3-4):787--855, 1989.

\bibitem{Fu05}
A.~Dembo and T.~Funaki.
\newblock {Stochastic interface models}.
\newblock In {\em {Lectures on probability theory and statistics}}, volume 1869
  of {\em {Lecture Notes in Math.}}, pages 103--274. Springer, Berlin, 2005.

\bibitem{DL01}
Y.~Derriennic and M.~Lin.
\newblock {Fractional {P}oisson equations and ergodic theorems for fractional
  coboundaries}.
\newblock {\em Israel J. Math.}, 123:93--130, 2001.

\bibitem{FischerOtto2015}
J.~Fischer and F.~Otto.
\newblock {Sublinear growth of the corrector in stochastic homogenization:
  optimal stochastic estimates for slowly decaying correlations}.
\newblock {\em Stoch. Partial Differ. Equ. Anal. Comput.}, 5(2):220--255, 2017.

\bibitem{FlegelHeidaSlowik2017}
F.~{Flegel}, M.~{Heida}, and M.~{Slowik}.
\newblock {Homogenization theory for the random conductance model with
  degenerate ergodic weights and unbounded-range jumps}.
\newblock {\em ArXiv e-prints}, Feb. 2017.

\bibitem{GM16}
A.~Giunti and J.-C. Mourrat.
\newblock {Quantitative homogenization of degenerate random environments}.
\newblock {\em Ann. Inst. Henri Poincar{\'e} Probab. Stat.}, 54(1):22--50,
  2018.

\bibitem{GNOest}
A.~Gloria, S.~Neukamm, and F.~Otto.
\newblock {Quantitative homogenization for correlated coefficient fields}.
\newblock {\em in preparation}.

\bibitem{GNOlong}
A.~Gloria, S.~Neukamm, and F.~Otto.
\newblock {Quantification of ergodicity in stochastic homogenization: optimal
  bounds via spectral gap on Glauber dynamics -- long version.}
\newblock {\em MPI Leipzig, preprint 3}, 2013.

\bibitem{GNOreg}
A.~{Gloria}, S.~{Neukamm}, and F.~{Otto}.
\newblock {A regularity theory for random elliptic operators}.
\newblock {\em Preprint, available at arXiv:1409.2678}, 2014.

\bibitem{GNO15}
A.~Gloria, S.~Neukamm, and F.~Otto.
\newblock {Quantification of ergodicity in stochastic homogenization: optimal
  bounds via spectral gap on {G}lauber dynamics}.
\newblock {\em Invent. Math.}, 199(2):455--515, 2015.

\bibitem{GO11}
A.~Gloria and F.~Otto.
\newblock {An optimal variance estimate in stochastic homogenization of
  discrete elliptic equations}.
\newblock {\em Ann. Probab.}, 39(3):779--856, 2011.

\bibitem{GO12}
A.~Gloria and F.~Otto.
\newblock {An optimal error estimate in stochastic homogenization of discrete
  elliptic equations}.
\newblock {\em Ann. Appl. Probab.}, 22(1):1--28, 2012.

\bibitem{GO15}
A.~{Gloria} and F.~{Otto}.
\newblock {The corrector in stochastic homogenization: optimal rates,
  stochastic integrability, and fluctuations}.
\newblock {\em ArXiv e-prints}, Oct. 2015.

\bibitem{GO14}
A.~Gloria and F.~Otto.
\newblock {Quantitative results on the corrector equation in stochastic
  homogenization}.
\newblock {\em J. Eur. Math. Soc. (JEMS)}, 19(11):3489--3548, 2017.

\bibitem{Ha88}
E.~Haeusler.
\newblock {On the rate of convergence in the central limit theorem for
  martingales with discrete and continuous time}.
\newblock {\em Ann. Probab.}, 16(1):275--299, 1988.

\bibitem{He82}
I.~S. Helland.
\newblock {Central limit theorems for martingales with discrete or continuous
  time}.
\newblock {\em Scand. J. Statist.}, 9(2):79--94, 1982.

\bibitem{HB70}
C.~C. Heyde and B.~M. Brown.
\newblock {On the departure from normality of a certain class of martingales}.
\newblock {\em Ann. Math. Statist.}, 41:2161--2165, 1970.

\bibitem{KV86}
C.~Kipnis and S.~R.~S. Varadhan.
\newblock {Central limit theorem for additive functionals of reversible
  {M}arkov processes and applications to simple exclusions}.
\newblock {\em Comm. Math. Phys.}, 104(1):1--19, 1986.

\bibitem{Ko79}
S.~M. Kozlov.
\newblock {The averaging of random operators}.
\newblock {\em Mat. Sb. (N.S.)}, 109(151)(2):188--202, 327, 1979.

\bibitem{Kr85}
U.~Krengel.
\newblock {\em {Ergodic theorems}}, volume~6 of {\em {de Gruyter Studies in
  Mathematics}}.
\newblock Walter de Gruyter \& Co., Berlin, 1985.
\newblock With a supplement by Antoine Brunel.

\bibitem{Ku14}
T.~Kumagai.
\newblock {\em {Random walks on disordered media and their scaling limits}},
  volume 2101 of {\em {Lecture Notes in Mathematics}}.
\newblock Springer, Cham, 2014.
\newblock Lecture notes from the 40th Probability Summer School held in
  Saint-Flour, 2010, {\'E}cole d'{\'E}t{\'e} de Probabilit{\'e}s de
  Saint-Flour. [Saint-Flour Probability Summer School].

\bibitem{LNO15}
A.~Lamacz, S.~Neukamm, and F.~Otto.
\newblock {Moment bounds for the corrector in stochastic homogenization of a
  percolation model}.
\newblock {\em Electron. J. Probab.}, 20:30 pp., 2015.

\bibitem{MO15}
D.~Marahrens and F.~Otto.
\newblock {Annealed estimates on the {G}reen function}.
\newblock {\em Probab. Theory Related Fields}, 163(3-4):527--573, 2015.

\bibitem{mourrat2011}
J.-C. Mourrat.
\newblock {Variance decay for functionals of the environment viewed by the
  particle}.
\newblock {\em {Annales de l'institut Henri Poincar{\'e} (B)}}, 47(1):294--327,
  2011.

\bibitem{Mo12}
J.-C. Mourrat.
\newblock {A quantitative central limit theorem for the random walk among
  random conductances}.
\newblock {\em Electron. J. Probab.}, 17:no. 97, 17, 2012.

\bibitem{Mo12a}
J.-C. Mourrat.
\newblock {On the rate of convergence in the martingale central limit theorem}.
\newblock {\em Bernoulli}, 19(2):633--645, 2013.

\bibitem{MO16}
J.-C. Mourrat and F.~Otto.
\newblock {Anchored {N}ash inequalities and heat kernel bounds for static and
  dynamic degenerate environments}.
\newblock {\em J. Funct. Anal.}, 270(1):201--228, 2016.

\bibitem{NS97}
A.~Naddaf and T.~Spencer.
\newblock {On homogenization and scaling limit of some gradient perturbations
  of a massless free field}.
\newblock {\em Comm. Math. Phys.}, 183(1):55--84, 1997.

\bibitem{Nlecture}
S.~Neukamm.
\newblock {An introduction to the qualitative and quantitative theory of
  homogenization}.
\newblock {\em Interdisciplinary Information Sciences}, 24(1):1--48, 2018.

\bibitem{NSS17}
S.~Neukamm, M.~Sch{\"a}ffner, and A.~Schl{\"o}merkemper.
\newblock {Stochastic homogenization of nonconvex discrete energies with
  degenerate growth}.
\newblock {\em SIAM J. Math. Anal.}, 49(3):1761--1809, 2017.

\bibitem{PV81}
G.~C. Papanicolaou and S.~R.~S. Varadhan.
\newblock {Boundary value problems with rapidly oscillating random
  coefficients}.
\newblock In {\em {Random fields, {V}ol. {I}, {II} ({E}sztergom, 1979)}},
  volume~27 of {\em {Colloq. Math. Soc. J{\'a}nos Bolyai}}, pages 835--873.
  North-Holland, Amsterdam-New York, 1981.

\bibitem{SS04}
V.~Sidoravicius and A.~S. Sznitman.
\newblock {Quenched invariance principles for walks on clusters of percolation
  or among random conductances}.
\newblock {\em Probab. Theory Related Fields}, 129(2):219--244, 2004.

\bibitem{Yu86}
V.~V. Yurinski\u\i.
\newblock {Averaging of symmetric diffusion in a random medium}.
\newblock {\em Sibirsk. Mat. Zh.}, 27(4):167--180, 215, 1986.

\end{thebibliography}

\end{document}